\def\R{{\sl I\!\kern-.27em R}}
\def\N{{\sl I\!\kern-.27em N}}
\def\P{{\sl I\!\kern-.27em P}}
\def\max{\mathop{\rm max}\nolimits}
\def\q{ \quad}
\def\dis{ \displaystyle}
\def\fra{\dis\frac}
\def\gradH{\nabla\!_{\bf x}} 
\def\gradV{\nabla}   
\def\laplac{\Delta}  
\def\uu{{\bf u}^{m+1}}
\def\umedio{{\bf u}^{m+1/2}}
\def\ee{{\bf e}^{m+1}}
\def\emedio{{\bf e}^{m+1/2}}
\def\deltaee{\delta_t {\bf e}^{m+1}}
\def\deltaemedio{\delta_t {\bf e}^{m+1/2}}
\newtheorem{theorem}{Theorem}
\newtheorem{lemma}[theorem]{Lemma}
\newtheorem{corollary}[theorem]{Corollary}
\newtheorem{remark}{Remark}
\newcommand{\cqfd}{\mbox{}\nolinebreak\hfill\rule{2mm}{2mm}\medskip\par}
\newenvironment{proof}[1] {\par\noindent{\bf Proof. }{#1}}{\cqfd}
\begin{document}
%
\title{Convergence and error estimates of a viscosity-splitting finite-element scheme for
  the  Primitive Equations
\thanks{The authors have been partially supported by MINECO (Spain),  Grant  MTM2012--32325
 and the second author is also partially supported  by the research group
FQM-315 of  Junta de Andalucía.}}

\author{F.~Guill\'en-Gonz\'alez\footnote{Departamento de
Ecuaciones Diferenciales y An\'alisis Num\'erico and IMUS. Universidad de
Sevilla. Aptdo. 11690, 41080 Sevilla (Spain),
email: guillen@us.es, phone: ++ 34 954559907.},
  M.V.~Redondo-Neble\footnote{Departamento de Matem\'aticas. Universidad de
C\'adiz. C.A.S.E.M. Pol\'{\i}gono R\'{\i}o San Pedro S/N, $11510$
Puerto Real. C\'adiz (Spain), email: victoria.redondo@uca.es,
 phone: ++ 34 956016058.} }
%
%
\date{}
\maketitle
\begin{abstract}
The purpose of this paper is the numerical analysis of a first
order fractional-step time-scheme, using decomposition of the
viscosity, and ``inf-sup" stable finite element space-approximations  for the 
Primitive Equations of the Ocean. The aim of the paper is twofold.
Firstly, we prove that the scheme is unconditionally  stable and
convergent towards  weak solutions of the 
Primitive Equations. Secondly,  optimal error estimates for  velocity and pressure are provided of order $O(k+h^l)$ for $l=1$ or $l=2$ when  either    first  or  second order finite-element approximations are considered ($k$ and $h$ being the time step and
the mesh size, respectively). In both cases, these error estimates are obtained  under the same constraint $k\le h^2$.   
\end{abstract}

\noindent
{\bf Subject Classification} 35Q35, 65M12, 65M15, 76D05

\noindent
{\bf Keywords:} Primitive Equations, finite elements, anisotropic
estimates, time-splitting schemes, stability,
  convergence, error estimates.

\section*{Introduction}
Assuming some simplifications (basically hydrostatic pressure and
  ``rigid lid'' hypothesis), the $3D$ Navier-Stokes equations derive
to the so-called ``Primitive Equations'' (or 
Hydrostatic Navier-Stokes equations). These equations arise a general
mathematical problem in the field of geophysical fluids
(\cite{CB,ltw,pe}). In particular, they describe the large-scale motions in the ocean \cite{ltw2}. The rigid lid hypothesis (no vertical displacements of the free surface of the ocean)  is usually assumed in Oceanography,  except in the case when fast surface waves are of interest \cite{CB}.

For simplicity, we take constant density, Cartesian
coordinates ($x$ in the easterly direction, $y$ in the northerly direction and
 $z$ perpendicular to the surface of the Earth) and we assume that the
effects due to temperature and salinity can be decoupled from the flow
dynamic. Then, the Primitive Equations model can be written as
 (\cite{l,ltw,ltw2}):
$$
 \left \{ \begin{array}{rl}
 \partial_t {\bf u} \,+\, ( {\bf U}\cdot \gradV) {\bf u}\,
 - \nu\laplac {\bf u} +\,{\bf b}({\bf u})\, +
               \gradH \, p \,
            =
   \, {\bf f} &  \hbox{ in $ \Omega \times (0,T)$,} \\
\noalign{\smallskip}
 \partial_z  p   =  -\rho \, g , \qquad
\gradV \cdot  {\bf U}\,  =  \, 0 &  \hbox{ in $ \Omega \times (0,T)$,}\\
        \noalign{\smallskip}
{\bf u}  = u_3 n_3 = 0 &  \hbox{ on $ \Gamma _b \times (0,T)$,}
\\ \noalign{\smallskip}
{\bf u}=0 & \hbox{ on $ \Gamma_l \times (0,T)$,}
 \\  \noalign{\smallskip}
 \nu \, \partial_z  {\bf u}\, = \, {\bf g}_s,\ \ \ u_3 = 0 &
\hbox{ on $ \Gamma _s \times (0,T)$,}\\
 {\bf u}_{\vert t=0} = {\bf u}_0  &   \mbox{ in } \Omega,
           \end{array} \right.
\leqno{(P)}
 $$
where $\Omega=\{ ({\bf x},z)\in \R ^3 : \ {\bf  x}=(x,y) \in S , \
-D({\bf x}) <z<0 \}$ is the  $3D$ domain  filled by the water, with
$S \subset \R ^2$  the surface domain (a regular bounded $2D$
domain) and $D:\overline{S} \rightarrow \R_+$
 (with $D>0$ in $S$)  the bottom function.
 Then, the different boundaries of  $\Omega$ are denoted as $\Gamma_s = \overline{S}\times \{0\}$   the surface, $\Gamma_b = \{({\bf x},-D({\bf x})):\ {\bf x}\in S\}$  the bottom  and $\Gamma_l= \{({\bf x},z):\ {\bf x}\in \partial
S , -D({\bf x}) <z<0\}$ the lateral walls (with outwards normal vector $({\bf
n}_{\bf x},n_3)$). Note that if $D\in C(\overline{S})$ then the bottom has no steps and the condition $u_3 n_3=0$ on $\Gamma _b \times (0,T)$ derives
 to $u_3=0$ on $\Gamma _b \times (0,T)$. In fact, $\Omega$ is the non-dimensional  domain obtained after a vertical scaling  and problem (P) appears as asymptotic limit from the anisotropic Navier-Stokes equations when the aspect ratio (vertical/horizontal) goes to zero \cite{AG,AG2,BL}.
 \medskip

 The unknowns of the problem  are ${\bf U}=({\bf u},u_3):\Omega\times
 (0,T) \rightarrow \R ^3$ the  3D velocity field
 (with   ${\bf u}=(u_1 , u_2 )$ the  horizontal velocity
 and $u_3$ the vertical one) and $p:\Omega\times (0,T)
\rightarrow \R $ the pressure.

\medskip

Also, ${\bf b}({\bf u})= f {\bf u}^\perp$  represents the effect
of the Coriolis Forces, with ${\bf u}^\perp=(-u_2, u_1)^t$ and
$f=2|w|\sin\theta$, where $w$ is the angular velocity of the Earth
and $\theta=\theta(y)$ is the latitude, $\rho\in\R_+$ is the water
density
 (that it is assumed a positive constant), $g\in\R_+$ is the gravity acceleration
 (another positive constant),
  ${\bf f} : \Omega \times (0,T) \rightarrow \R ^2$ is a field of external
horizontal forces (depending for instance on the salinity and
temperature) and ${\bf g}_s:\Gamma_s \times (0,T) \rightarrow \R ^2$
represents the stress of the wind on the surface.

\medskip

Finally,  $\nabla=(\gradH, \partial_z)^t $ stands for  the
three-dimensional gradient operator  (with $\gradH =(\partial_x ,
\partial_y)^t$ its  horizontal
 component) y  $\laplac$ stands for  the  three-dimensional
 Laplacian operator.
 
 The problem (P) have been vertically scaled (see \cite{AG2}) such that the horizontal and vertical dimensions in $\Omega$ are of the same order.

%

\begin{remark}  When variations in the surface are important in the
  problem, it is usual to consider the general  Navier Stokes equations
  with  hydrostatic pressure,  introducing the  free surface as a new unknown. In this case, one has to
 change the boundary condition of ``rigid lid''  ($u_3 = 0$ on
 $\Gamma_s$) for the equation of the free surface, arriving at the
so-called  three-dimensional Shallow Water model.  Some numerical approximations of this model can be seen in  \cite{cas1, cas2, cas4}.  
\end{remark}

We will give two reformulations of  problem $(P)$ leading to different spatial approximations.


The vertical dependence of the pressure can be decomposed by using the hydostatic pressure $\rho g z$,  defining 
 $p_{s}(t;{\bf x})= p(t;{\bf x},z)- \rho g z,$ where 
$p_s: S\times (0,T) \rightarrow \R $ is a new unknown (defined
only on the surface $S$),
  that it will be called   \emph{surface pressure}.

   Notice that  incompressibility equation
$\nabla\cdot{\bf U}=0$ in $\Omega\times (0,T)$ and  boundary
condition $u_3=0$ on $\Gamma_s\times (0,T)$
  are equivalent to the following integral formula for the vertical velocity:
  \begin{equation}\label{vert-veloc}
u_3 (t;{\bf x},z)  = \int_z^0 \gradH \cdot  {\bf u}(t;{\bf x},s)\,ds.
\end{equation}
    Moreover, the following equality holds:
\begin{equation}\label{equality-equiv}
\int_{-D({\bf x})}^0 \nabla \cdot {\bf U} \, dz = \gradH \cdot
\langle {\bf u}\rangle - ({\bf u},u_3)({\bf x},-D({\bf
x}))\cdot(\nabla_{\bf x} D({\bf x}),1)=0\quad \hbox{in $S\times
(0,T)$},
\end{equation}
where $\langle {\bf u}\rangle$ denotes the total vertical flux of
the horizontal velocity:
$$
\langle {\bf u} \rangle (t;{\bf x})=\int_{-D({\bf x})}^{0} {\bf u}
(t;{\bf x},z)\, dz. $$ 
Therefore, since $(\nabla_{\bf x} D({\bf
x}),1)$ is parallel to the normal vector $({\bf n}_{\bf x}, n_3)$ on
$\Gamma_b$, assuming $\nabla\cdot {\bf U}=0$ in $\Omega\times
(0,T)$,
 the so-called slip condition ${\bf u}\cdot
{\bf n}_{\bf x}+ u_3n_3=0$
 on $\Gamma_b\times (0,T)$
 is equivalent to the constraint $\gradH\cdot \langle {\bf
u}\rangle=0$ in $S\times (0,T)$ (\cite{l,ltw,ltw2}).

 Then,  problem (P) can be reformulated
as the following \emph{integro-differential problem}:
$$
\left \{ \begin{array}{rl}
 \partial_t {\bf u} \,+\, ( {\bf U}\cdot \gradV ) {\bf u}\,
- \nu  \Delta {\bf u} + {\bf b}({\bf u})\,+\,
               \gradH\, p_{s} \,
             =
   \, {\bf f} & \hbox{ in $ \Omega \times (0,T)$,} \\
 \noalign{\smallskip}
 \gradH  \cdot \langle {\bf u}\rangle \,  =  \, 0  &\hbox{ in $ S \times
(0,T)$,}  \\
\noalign{\smallskip}
 \nu \partial_z {\bf u}  = {\bf g}_s  &  \hbox{on $ \Gamma _s
   \times (0,T)$,}
   \\
\noalign{\smallskip}
{\bf u}  = 0  & \hbox{ on $ (\Gamma_b\cup\Gamma_l) \times
(0,T)$,}
  \\
\noalign{\smallskip}
 {\bf u}_{\vert t=0}= {\bf u}_0 &  \mbox{ in } \Omega ,
       \end{array} \right.
\leqno{(Q)}
 $$
where ${\bf U}=({\bf u},u_3)$ with  $u_3$
depending on the $\gradH \cdot  {\bf u}$ by the integral formula
(\ref{vert-veloc}).
\medskip

Instead of problem $(Q)$, other reformulation  of  problem $(P)$ can be done, 
based on the following equivalence: assuming
the slip-condition  $ {\bf u} \cdot {\bf n}_{\bf x} + u_3\, n_3 =0 $ on $\Gamma_b$, one has
$$
\left.
\begin{array}{c}
  \partial_z (\nabla \cdot {\bf U})=0 \q \hbox{in $\Omega$}\\
 \gradH \cdot \langle {\bf u} \rangle =0\q \hbox{in $S$}\\
\end{array}
\right\} \Longleftrightarrow \hbox{$\nabla \cdot {\bf U} =0$ in
$\Omega$.}
$$
Indeed, from  the equation   $\partial_z (\nabla  \cdot {\bf U})=
0$, one has  $\nabla \cdot {\bf U}=a({\bf x})$. By integrating in
vertical this equality and using (\ref{equality-equiv}) (taking
into account  $\gradH \cdot  \langle {\bf u} \rangle =0$ and
the slip-condition  on $\Gamma_b$), one has
 $$0= \int_{-D({\bf x})}^0 g({\bf x}) \, dz =D({\bf x})\, a({\bf x}) \quad\hbox{in $S$.}$$
  Then $a\equiv 0$ in $S$ and $ \nabla \cdot {\bf U}=0$.
   Conversely, since $ \nabla \cdot {\bf U}=0$ then
 $\partial_z (\nabla \cdot {\bf U})=0$. Moreover, $\gradH \cdot \langle {\bf
   u} \rangle =0$ is deduced again from (\ref{equality-equiv}) integrating in vertical $ \nabla \cdot
  {\bf U}=0$ and taking into account the slip-condition on $\Gamma_b$.
  
Therefore, the second  reformulation  of  problem $(P)$ is:
$$
\left \{ \begin{array}{rl}
 \partial_t {\bf u} \,+\, ( {\bf U}\cdot \gradV ) {\bf u}\,
- \nu \Delta {\bf u} + {\bf b}({\bf u})\,+\,
               \gradH\, p_{s} \, =
   \, {\bf f} & \hbox{ in $ \Omega \times (0,T)$,} \\
 \noalign{\smallskip}
\partial_z ^2 u_3  + \partial_z \, \gradH \cdot {\bf u} =  0    &
\hbox{ in $ \Omega \times (0,T)$,} \\
\noalign{\smallskip} \gradH \cdot \langle {\bf u} \rangle = 0 &
\hbox{ in $ S \times (0,T)$,}
\\ \noalign{\smallskip}
 \nu \partial_z {\bf u}  = {\bf g}_s  &  \hbox{on $ \Gamma _s
   \times (0,T)$,}
   \\ \noalign{\smallskip}
{\bf u}  = 0  & \hbox{ on $ (\Gamma_b\cup\Gamma_l) \times
(0,T)$,}
\\ \noalign{\smallskip}
  u_3  = 0 & \hbox{ on  $(\Gamma_s\cup\Gamma_b)\times (0,T)$,}
 \\
\noalign{\smallskip}
 {\bf u}_{\vert t=0} = {\bf u}_0 &  \mbox{ in } \Omega .
       \end{array} \right.
\leqno{(R)}
 $$
  Notice that in $(R)$ the vertical velocity $u_3$ is uniquely defined by
  the $z$-elliptic problem 
   \begin{equation}\label{u3-difer-eq}
\partial^2_z u_3=-\partial_z\gradH \cdot{\bf u}\q \hbox{in $ \Omega \times 
(0,T)$,}\q u_3|_{\Gamma_s\cup\Gamma_b}=0.
\end{equation}

From the numerical analysis point of view, the convergence of some
Finite Element (FE) schemes for the stationary problem related to
$(Q)$, has been proved in \cite{cg}, where  the so-called
\emph{hydrostatic Inf-Sup} stability condition appears. To
approximate the time-dependent problem, a stabilized FE scheme was used by Chac\'on-Rodr\'{\i}guez in
\cite{c-rg-04,c-rg-05}, and Bermejo in \cite{bermejo} and
Bermejo-Galán in \cite{bermejo2}  used a
semi-lagrangian projection time-scheme together with finite
elements in space. On the other hand, Chacón-Gómez-Sánchez in \cite{cgs} have derived the numerical approximation of this model by the Orthogonal Sub-Scales FE method, obtaining  stability, convergence and error estimates (optimal for 2D flows) for a steady linearized model and they have performed some numerical tests for the non-linear case.
\medskip

The goal of this paper is to design   numerical schemes associated to both  formulations $(R)$ and $(Q)$, 
based on a fractional-step  time-scheme  and FE in space, which satisfies analytical results into two directions: on one hand, 
unconditional stability and convergence towards weak solutions of
$(R)$ and, on the other hand,  error estimates with respect to a
sufficiently regular solution, under the constraint $$k\le h^2 .
\leqno{\bf (H)}$$ 

These results could be seen as an extension of the numerical  analysis done for a viscosity-splitting scheme applied to the time-dependent Navier-Stokes
Equations in   \cite{bch} and
\cite{G-Re-cras,g-re-NM,g-re-space}. Nevertheless, error estimates for the  Navier-Stokes case  have been deduced  in \cite{G-Re-cras,g-re-space}, using the corresponding time-discrete scheme as intermediate problem to 
obtain error  estimates for  the fully discrete scheme, under the constraint 
$$
h\le C\, k.
$$
Since this constraint has contrary sense that {\bf (H)}, it is not clear how the argument done in \cite{G-Re-cras,g-re-space} for Navier-Stokes  could be extended for the Primitive Equations.
\medskip

 In the scheme studied in this paper,   three subproblems
must be solved at every time step $m$. Indeed, given $({\bf u}_h^{m},p_{s,h}^{m})$,
  firstly  the vertical velocity $u_{3,h}^{m}$ is computed
 in function of  $\gradH \cdot  {\bf u}_h^{m }$, afterwards
   an intermediate horizontal velocity ${\bf  u}_h^{m+1/2}$
   and finally a pair  $({\bf u}_h^{m+1}, p_{s,h}^{m+1})$ is computed solving an  Hydrostatic Stokes  problem.




\medskip

The rest of this paper is organized as follows. After giving  some
preliminaries in Section 1,  the fully discrete
scheme related to $(R)$ is described in Section 2, obtaining in Section 3 some  stability a priori estimates and convergence  (by subsequences) as
$(k,h)\to 0$ towards weak solutions of  $(R)$.

 In Section 4, 
   some error estimates for velocity and  pressure are deduced.
  Firstly, we obtain $O(\sqrt{k}+h^l)$ error estimates
   for both  velocities ${\bf u}_h^{m+1/2}$ and ${\bf u}_h^{m+1}$, improving to optimal accuracy $O(k+h^l)$ for the ``end of step'' velocity ${\bf u}_h^{m+1}$,  for $l=1,2$   the order of the FE approximation. Afterwards,   $O(\sqrt{k}+h^l)$  for the discrete time derivative of ${\bf u}_h^{m+1}$ and  for the pressure will be deduced. 
   
     On the other hand, only when $l=2$, we obtain optimal $O(k+h^2)$ error estimates   for the discrete time derivative of ${\bf u}_h^{m+1}$ and for the pressure.

In order to also deduce optimal accuracy for the pressure  when $l=1$,  in Section 5 we consider a modified   scheme associated to  $(Q)$, where the vertical velocity is approximated via an integral computation like 
(\ref{vert-veloc}). For this scheme, optimal  
 accuracy in the $L^2(\Omega)$-norm; $O(k+h^{l+1})$   for ${\bf u}_h^{m+1}$ and $O(k+h^l)$   for the pressure are obtained. Finally, some  
comments about the treatment of the Coriolis term are given in
Section 6, and some conclusions in Section 7.

\section{Preliminaries}

\subsection{The discrete Gronwall  Lemma}
In this paper, the following discrete Gronwall lemma will be
frequently used (see~\cite[p.~369]{RANNACHER}):

\begin{lemma} \label{GronwallD}
Let $k$, $B$ and $a_m\,$, $b_m\,$, $c_m\,$,
$\gamma_m$ be nonnegative numbers.
\begin{description}
\item[{\bf a)  (Discrete Gronwall inequality)}] We assume
$$
   a_{r+1}+k\sum_{m=0}^{r}b_{m}\leq k\sum_{m=0}^{r}
   \gamma_{m}a_{m}+k\sum_{m=0}^{r}c_{m}+B\qquad\forall r\geq0.
$$
 Then,  one has
$$
   a_{r+1}+k\sum_{m=0}^{r}b_{m}\leq\exp\left(k\sum_{m=0}^{r}
   \gamma_{m}\right)
   \left\{k\sum_{m=0}^{r}c_{m}+B\right\}\qquad\forall r\geq 0.
$$
\item[{\bf b)  (Generalised discrete Gronwall inequality)}] We assume
$$
   a_{r}+k\sum_{m =0}^{r}b_{m }\leq k\sum_{m=0}^{r}
   \gamma_{m}a_{m}+k\sum_{m=0}^{r}c_{m}+B\qquad\forall r\geq0
$$
such that $k\gamma_{m}<1$ for all $m$. Then, setting
$\sigma_{m}\equiv(1-k\gamma_{m})^{-1}$, one has
$$
   a_{r}+k\sum_{m=0}^{r}b_{m}\leq\exp\left(k\sum_{m=0}^{r}\sigma_{m}
   \gamma_{m}\right)
   \left\{k\sum_{m=0}^{r}c_{m}+B\right\}\qquad\forall r\geq 0.
$$
\end{description}
\end{lemma}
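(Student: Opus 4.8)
The plan is to prove both inequalities by a telescoping (discrete integrating-factor) argument, exploiting that every quantity is nonnegative, that the accumulated data $D_r:=k\sum_{m=0}^{r}c_m+B$ is nondecreasing in $r$, and that the terms $k\sum b_m$ carry the favourable sign and may therefore be retained on the left without loss. The only genuinely elementary facts I will need are $1+x\le e^x$ and, for part (b), $\sigma=(1-x)^{-1}\le\exp\bigl(x\sigma\bigr)$ for $x\in[0,1)$.

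For part (a) I would first observe that the right-hand sum runs only up to $m=r$ while the left-hand term is $a_{r+1}$; hence the right-hand side contains no copy of the quantity being bounded on the left, and no self-reference arises. Writing $R_r:=k\sum_{m=0}^{r}\gamma_m a_m+D_r$, the hypothesis reads $a_{r+1}+k\sum_{m=0}^{r}b_m\le R_r$ and in particular $a_r\le R_{r-1}$. Computing the increment $R_r-R_{r-1}=k\gamma_r a_r+kc_r\le k\gamma_r R_{r-1}+kc_r$ gives the recursion $R_r\le(1+k\gamma_r)R_{r-1}+kc_r$. Unfolding this recursion, bounding each partial product of the factors $(1+k\gamma_m)$ by $\exp(k\sum\gamma_m)$ through $1+x\le e^x$, and re-absorbing the emitted $kc_i$ into $D_r$ (each is multiplied by a product of factors $\ge 1$ but $\le\exp(k\sum_{m}\gamma_m)$) yields the stated bound, once the base term $R_0=k\gamma_0 a_0+D_0$ is closed by regarding the initial value $a_0$ as controlled by the data.

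For part (b) the new feature is that the right-hand sum now reaches the same index $n$ as the left, so it contains the diagonal term $k\gamma_n a_n$. I would isolate this term, move it to the left as $(1-k\gamma_n)a_n$, and divide by $1-k\gamma_n>0$ --- this is precisely where the hypothesis $k\gamma_m<1$ is indispensable and where the factor $\sigma_n=(1-k\gamma_n)^{-1}\ge1$ is born. Since $\sigma_n\ge1$, the $b$-terms survive on the left, giving $a_n+k\sum_{m=0}^{n}b_m\le\sigma_n\bigl(k\sum_{m=0}^{n-1}\gamma_m a_m+D_n\bigr)$. The base case $n=0$ now reads $(1-k\gamma_0)a_0\le D_0$, which bounds $a_0$ on its own and makes the estimate self-contained (unlike the shifted hypothesis of part (a)). The same telescoping as before then produces a product of factors $\sigma_m$ and $(1+k\sigma_m\gamma_m)$; converting this product into the single exponential $\exp(k\sum_{m=0}^{n}\sigma_m\gamma_m)$ requires both $1+x\le e^x$ and the refined inequality $\sigma_m\le\exp(k\sigma_m\gamma_m)$.

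I expect the main obstacle to be the bookkeeping in part (b): one must track the implicit (self-referential) diagonal term correctly, verify that dividing by $1-k\gamma_n$ preserves the direction of the inequality and does not disturb the favourable sign of the $b$-sum, and finally assemble the accumulated product of $\sigma_m$- and $(1+k\sigma_m\gamma_m)$-factors into one clean exponential. Part (a) is then the degenerate case in which the diagonal term is absent, so $\sigma_m\equiv1$ and no smallness condition on $k\gamma_m$ is needed.
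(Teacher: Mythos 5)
The paper does not prove this lemma at all --- it is quoted with a citation to Heywood--Rannacher \cite[p.~369]{RANNACHER} --- so your proposal can only be measured against that standard proof. Your part (b) reproduces it correctly: isolating the diagonal term $k\gamma_n a_n$, dividing by $1-k\gamma_n>0$ (the $b$-sum survives because $\sigma_n\ge 1$), so that with $S_n:=k\sum_{m=0}^{n-1}\gamma_m a_m+k\sum_{m=0}^{n}c_m+B$ one has $a_n+k\sum_{m=0}^{n}b_m\le\sigma_n S_n$ and the recursion $S_{n+1}\le(1+k\sigma_n\gamma_n)S_n+kc_{n+1}$ with the self-contained base case $S_0=kc_0+B$; unfolding and using $1+x\le e^x$ together with your refined inequality $\sigma_m=(1-k\gamma_m)^{-1}=1+k\gamma_m\sigma_m\le\exp(k\sigma_m\gamma_m)$ (which is indeed valid for $k\gamma_m\in[0,1)$) assembles the product into the single factor $\exp\left(k\sum_{m=0}^{n}\sigma_m\gamma_m\right)$, exactly the stated bound.

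Part (a), however, contains a genuine gap, and your own phrasing betrays it: ``regarding the initial value $a_0$ as controlled by the data'' is not a proof step but a missing hypothesis. Your recursion correctly yields $R_r\le(1+k\gamma_r)R_{r-1}+kc_r$ for $r\ge1$, but it bottoms out at $R_0=k\gamma_0 a_0+kc_0+B$, and nothing in the stated assumptions --- which bound only $a_{r+1}$ for $r\ge0$ --- controls $a_0$. As literally stated, the conclusion of (a) is in fact false: take $k=1$, $B=1$, $b_m=c_m=0$, $\gamma_0=1$, $\gamma_m=0$ for $m\ge1$, $a_0=100$, $a_{r+1}=101$; the hypothesis holds for every $r$, yet the conclusion would force $a_{r+1}\le e$. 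The statement must be read with the usual tacit convention that the $m=0$ contribution $k\gamma_0 a_0$ is part of the data (e.g.\ $a_0\le B$, which is how the paper actually applies it: in Theorems \ref{dt1}--\ref{dtt2} the initial errors satisfy $|{\bf e}_h^0|\le C\,h^l$ and feed into $B$). Note the contrast with part (b), where this issue cannot arise because the $r=0$ instance of the hypothesis itself gives $(1-k\gamma_0)a_0\le kc_0+B$, i.e.\ $a_0\le\sigma_0(kc_0+B)$. So either add the assumption on $a_0$ explicitly (or, equivalently, extend the hypothesis to $r=-1$ with empty sums, reading $a_0\le B$), after which your telescoping closes exactly as written, or accept that (a) as stated is unprovable; what you cannot do is invoke control of $a_0$ that the hypotheses do not supply.
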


\subsection{Space of functions and weak solutions}
To define  the notion  of  weak solution of problem $(R)$, we
introduce  the following Hilbert spaces:
\begin{eqnarray*}
      H_{b,l}^1(\Omega ) &=& \{ {\bf v} \in  H^1(\Omega )
  \ / \ {\bf v}|_{\Gamma_b\cup\Gamma_l}=0 \},
  \\
   {\bf H}&=&\{  {\bf v} \in L^2 (\Omega )^2 \ / \
  \gradH \cdot \langle {\bf v} \rangle =0 \mbox{ in } S,
  \  \langle {\bf v} \rangle \cdot {\bf n}_{\partial S}=0 \},
 \\
   {\bf V}&=& \{ {\bf v} \in H_{b,l}^1(\Omega )^2 \ / \
  \gradH \cdot \langle{\bf v}\rangle =0 \mbox{ in } S \},
\end{eqnarray*}
being ${\bf n}_{\partial S}$ the normal outward unitary vector of
$\partial S$. Observe that spaces $ {\bf H}$ and $ {\bf V}$ are
the ``hydrostatic version'' of the classical spaces used  for the
Navier-Stokes equations.

We denote ${\bf H}_{b,l}^1(\Omega )= H_{b,l}^1(\Omega )^2$, etc. The
norm and scalar product in $L^2 (\Omega) $ will be denoted by $|
\cdot | $ and $ \Big(\cdot, \cdot \Big) $, whereas in
  ${\bf H}_{b,l}^1 (\Omega) $   we denote by  $\| \cdot \| $ the norm of the  gradient in $L^2(\Omega)$,
  that is $\|u\|=|\nabla u|$.
On the other hand, we denote by  ${\bf H}_{b,l}^{-1}(\Omega)$ and
$H^{-1/2}(\Gamma_s)$ the dual spaces of
  ${\bf H}_{b,l}^{1}(\Omega)$ and  $H^{1/2}(\Gamma _s)$
respectively, with  duality products $\langle
\cdot,\cdot\rangle_\Omega$ and
$\langle\cdot,\cdot\rangle_{\Gamma_s}$. Spaces defined in $\Omega $ will be frequently abbreviate as $L^2$ instead of $L^2(\Omega) $ etc.

\medskip

The space related to the surface pressure will be:
$$
L^2_0(S)=\left\{q\in L^2(S)\ /\ \int_S q=0 \right\}.
$$

\medskip

The  vertical velocity $u_{3}$ can be  obtained from $\gradH
\cdot {\bf u}$ by means of either the integral formulation
(\ref{vert-veloc}) or the differential formulation
(\ref{u3-difer-eq}). In this process, the $L^2(\Omega)$ regularity
for the horizontal derivatives of $u_3$ is not obtained, hence the following 
``anisotropic'' Hilbert spaces should be defined:
$$
 H(\partial _z )=\{ v \in L^2 (\Omega ) \ / \
  \partial_z v \in L^2 (\Omega ) \}
  \quad
  (\hbox{resp. } H^k(\partial _z )=\{ v \in H^k (\Omega ) \ / \
  \partial_z v \in H^k (\Omega ) \}),
$$
and $H_0(\partial_z)=\{ v\in H(\partial_z) \ / \ v=0 \mbox{ on
}\Gamma_s\cup\Gamma_b\}$. The inner products are defined by $(v,w)_{H(\partial _z
)}=\Big(v,w\Big)+\Big(\partial_zv,\partial_z w\Big)$ in $H(\partial _z )$ and by $(v,w)_{H_0(\partial _z
)}=\Big(\partial_zv,\partial_z w\Big)$ in
$H_0(\partial _z )$, owing to a vertical
Poincar\'e inequality (see (\ref{dpoincare}) below).

Notice that, given ${\bf u}\in {\bf H}_{b,l}^1(\Omega)$, the
weak solution $u_3$ of problem (\ref{u3-difer-eq}) can be  defined by:
$$u_3\in H_0(\partial _z ) \q \hbox{such that}\q
(u_3,w)_{H_0(\partial _z )}= -\Big(\gradH\cdot{\bf u},\partial_z
w\Big)\q \forall\, w\in H_0(\partial _z ). $$


Due to the loss of regularity of $u_3 $ ($u_3 \in L^2(\Omega)$ but $u_3
\not\in H^1(\Omega)$), the vertical convection term $u_3\partial_z{\bf u}$
does not belong to ${\bf H}_{b,l}^{-1}(\Omega )$, hence 
 more regular test functions must be introduced in the variational
 formulation of $(R)$. For instance, it  suffices to consider
 ${\bf v}\in {\bf H}_{b,l}^1(\Omega )$ such that
 $\partial_z {\bf v} \in {\bf L}^3(\Omega)$,
because in this case one has (see \cite{cg}):
$$
\Big|\Big\langle ({\bf U} \cdot \nabla){\bf u},{\bf v} \Big\rangle_\Omega \Big|
=
 \Big| \int_\Omega({\bf U} \cdot \nabla){\bf v}\cdot{\bf u} \Big|  < +\infty.
$$
Another possibility is to assume ${\bf v}\in {\bf
H}_{b,l}^1(\Omega )\cap {\bf L}^\infty(\Omega)$, and then
$\dis\int_\Omega({\bf U} \cdot \nabla){\bf u}\cdot{\bf v} <
+\infty$.

\medskip

 For fully discrete schemes, it is usual to use the following
skew-symmetric of the
convective terms: for each ${\bf U} \in {\bf H}^1_{b,l} \times
H_0(\partial_z ),\, {\bf v}\in {\bf H}^1$, ${\bf w}\in {\bf H}^1 $
with  either ${\bf w}\in{\bf L}^{\infty}$  or $\partial_z {\bf w}
\in {\bf L}^3 $,
\begin{eqnarray*}
c\Big({\bf U},{\bf v} ,{\bf w}\Big) &=&  \int_\Omega \Big \{ ({\bf
U} \cdot \nabla ) {\bf
  v} \cdot {\bf w} + \frac{1}{2}  ( \nabla \cdot {\bf U} ) {\bf
  v}\cdot {\bf w} \Big \} \q\q
\hbox{if ${\bf w} \in {\bf L}^\infty$}  \\
 &=& - \int_\Omega \Big \{ ({\bf U} \cdot \nabla) {\bf
  w} \cdot {\bf v} + \frac{1}{2}  ( \nabla \cdot {\bf U} ) {\bf
  v}\cdot {\bf w} \Big \} \q \q
\hbox{if $ \partial_z{\bf w}\in {\bf L}^3$}.
\end{eqnarray*}
 Obviously, $c({\bf U},{\bf v} ,{\bf w})=\displaystyle  \int_\Omega({\bf U} \cdot
\nabla ) {\bf v} \cdot {\bf w}$ whether $\nabla \cdot {\bf U}=0$.
 By simplicity, we denote the vertical part of these trilinear forms in the same  manner, i.e.
$$c\Big(u_3,{\bf v} ,{\bf w}\Big)= \int_\Omega \Big \{ u_3 \,\partial_z  {\bf
  v} \cdot {\bf w} + \frac{1}{2}  (\partial_z u_3) \, {\bf
  v}\cdot {\bf w} \Big \}
  = - \int_\Omega \Big \{ u_3 \, \partial_z  {\bf
  w} \cdot {\bf v} + \frac{1}{2}   (\partial_z u_3) \, {\bf
  v}\cdot {\bf w} \Big \}$$
Previous equalities hold even for discrete spaces, hence in the sequel, we
will use any of these two possibilities.

Let us to define the weak solutions of $(R)$ in $(0,T)$,  solving the following 
variational formulation  (in a reduced form without pressure):
  Find ${\bf U}=({\bf u},u_3) \in L^2(0,T;{\bf V} \times
H_0(\partial_z ))$ with ${\bf u}\in L^\infty(0,T;{\bf H})$ and ${\bf u}_{\vert t=0} = {\bf u}_0$ in $\Omega$, 
such that a.e.~$t\in (0,T)$:
$$
\left\{   
\begin{array}{l} \displaystyle 
\Big({\bf u}_t,{\bf v}\Big) + c \Big({\bf U},{\bf u}, {\bf v}\Big) + \nu\Big(\nabla
{\bf u}, \nabla {\bf v}\Big) 
= \Big\langle{\bf f}, {\bf v}\Big\rangle_\Omega +\Big\langle{\bf
g}_s, {\bf v}\Big\rangle_{\Gamma_s},\q \forall\, {\bf v} \in {\bf
V}\cap{\bf W}^{1,3}_{b,l}\cap {\bf L}^\infty,
\\ \Big ( \partial_z \, u_3 , \partial_z w \Big ) = -
\Big ( \gradH \cdot {\bf u} ,  \partial_z w \Big ) ,\q \forall \, w
\in H_0(\partial_z).\end{array} \right.
$$
Here  the Coriolis term has not been considered, because it  does not add important difficulties to the arguments of this paper. In Section~\ref{coriolis}, we briefly analyze some possibilities  to approximate  the Coriolis term.

\subsection{Known analytical results}

The existence of weak solutions $({\bf u},p_s)$ of  problem
$(Q)$ is well known, see Lions-Teman-Wang \cite{ltw2} and
Lewandowski \cite{l}, always in domains with side-walls (i.e.~$D
\ge D_{min}>0$ in $\overline{S}$). In these works,  a compactness
method is used to obtain the velocity ${\bf u}$ in a space with
the restriction $\nabla \cdot \langle {\bf u} \rangle =0$ and
afterwards, the surface pressure $p_s$ is obtained by means of a
specific De Rham's lemma on the surface $S$. In domains without
side-walls (i.e.~when the depth function $D$ can degenerate to
zero), the existence of weak solutions $({\bf u}, u_3, p)$ of
$(P)$ is obtained by an asymptotic limit  applied to the
Navier-Stokes equations with anisotropic viscosity when the ratio
depth over horizontal diameter (of the domain) goes to zero; see
Besson-Laydi \cite{BL} for the stationary case and
Azerad-Guill\'en \cite{AG,AG2} for the time-dependent one. The
existence of weak solutions of the stationary problem related to
$(Q)$ in domains without side-walls is proved in
Chac\'on-Guill\'en \cite{cg} by an internal approximation argument; 
a mixed (velocity-pressure) variational formulation  is
approximated by a conformed Finite Element method verifying the
so-called ``hydrostatic Inf-Sup condition".
On the other hand,  Orteg\'on in \cite{ortegon} obtained a generalization
of De Rham's Lemma to  general domains without side-walls.

\medskip

Respect to  regularity results for the Primitive Equations, the
existence of  strong solutions (with $H^2(\Omega)$-regularity for
the horizontal velocity)  is treated by Ziane in \cite{Z} for the
linear stationary problem associated to $(Q)$. This result is
extended in \cite{gr} to the linear evolutive case. For 
the nonlinear problem, the  existence (and uniqueness)  of local in
time strong solutions for $2D$ domains (global in time for small enough data),  is proved in  \cite{gr}. The
extension (and improvement) of this kind of results to $3D$ domains
can be seen in \cite{gmma}. Finally,  assuming flat bottom and
Neumann boundary condition on the bottom, the existence of global in
time regular solutions without constraints is proved in
\cite{cao-titi}. In \cite{ziane}, this result is also obtained with
 Dirichlet boundary conditions on the bottom.
  \medskip

\subsection{Some 3D anisotropic spaces and related estimates}

 Given $p,q \in [1,+\infty]$, it will be said that
a function $u\in L_z^q L_{\bf x}^p(\Omega)$ (or simply $L_z^q L_{\bf x}^p$) if:
$$
  u (\cdot, z ) \in L^p(S_z) \quad \mbox{and} \quad
  \Vert u(\cdot ,z )
  \Vert_{L^p(S_z)} \in L^q(-D_{\rm max},0),
$$
where $S_z= \{{\bf x} \in S\ :\ ({\bf x},z) \in \Omega \}$, and its
norm is given  by  $
 \left\Vert \Vert u(\cdot ,z) \Vert_{L^p(S_z)}
 \right\Vert_{L^q(-D_{\rm max},0)}.$
Some   anisotropic  norms frequently used in this paper will be:
$$
\begin{array}{rcl}
   \Vert u \Vert_{L_z^2L_{{\bf x}}^4(\Omega)} & = &
   \left( \displaystyle\int_{-D_{\rm max}}^0
   \Vert u(\cdot ,z ) \Vert_{L^4(S_z)}^2 dz  \right)^{1/2} \\
\noalign{\vspace{-1ex}}\\
   \Vert u \Vert_{L_z^{\infty}L_{{\bf x}}^2(\Omega)} & = &
   \displaystyle\sup_{z \in (-D_{\rm max},0)}  \Vert
   u(\cdot ,z)  \Vert_{L^2(S_z)},
\end{array}$$


In a similar way, we define the spaces
$$
H_z^1 L_{\bf x} ^2 \equiv H^1 ( -D_{\rm max}, 0; L^2 (S_z)),\q
L_z^2 H_{\bf x} ^1 \equiv L^2 ( -D_{\rm max}, 0; H^1 (S_z)).
$$
Notice that $H_z^1 L_{\bf x} ^2=H(\partial_z)$.

 On the other hand,  we will use frequently the following inequalities (see
\cite{gmma}):
\begin{itemize}

 \item Horizontal  Gagliardo-Nirenberg inequality (related to $2D$ domains):
$$\begin{array}{l}
\| u\| _{L_z^2\, L_{\bf x}^4} \le C\, |u|^{1/2} \, |\gradH u|
^{1/2}  \quad  \forall\, u \in L_z^2 H_{\bf x} ^1 \ \ \mbox{such
that} \
u|_{\Gamma_b \cup \Gamma_l}=0 ,\\ 
\| u\| _{L_z^2\, L_{\bf x}^4} \le C\, |u|^{1/2} \, \| u \|^{1/2}
\quad  \forall\, u \in H^1.
\end{array}
$$

    \item  Vertical Poincar\'e Inequality (related to $1D$ domains):
\begin{equation}\label{dpoincare}
| v |  \le D_{\rm max}^{1/2}\,  |\partial_z \, v |, \quad
\forall\, v \in H_z^1 L_{\bf x} ^2 \ \ \mbox{such that} \
v|_{\Gamma_b}=0 \ \mbox{ or } v|_{\Gamma_s}=0.
\end{equation}
    \item Vertical Gagliardo-Nirenberg inequality (related to $1D$ domains):
\begin{equation}\label{du_3}
\| v\| _{L_z^{\infty} L_{\bf x}^2} \le C\, (|v| + |v|^{1/2} \,
|\partial_z v| ^{1/2} ) \quad \forall\, v \in H_z^1 L_{\bf x} ^2 .
\end{equation}
Moreover, if $v|_{\Gamma_b}=0$ or $v|_{\Gamma_s}=0$, then $ \| v \|
_{L_z^{\infty} L_{\bf x}^2} \le C\, | v |^{1/2} \, |\partial_z v |
^{1/2} $.

In particular, from (\ref{dpoincare}) and (\ref{du_3}), one has
\begin{equation}\label{du_4}
\| v\| _{L_z^{\infty} L_{\bf x}^2} \le C\, |\partial_z v| , \quad
\forall\, v \in H_z^1 L_{\bf x} ^2\ \ \mbox{such that} \
v|_{\Gamma_b}=0 \ \mbox{ or } v|_{\Gamma_s}=0 .
\end{equation}
\end{itemize}

\section{Description of the scheme}

The time interval  $[0,T]$ is divided into  $M$ subintervals of equal length  $k=T/M$, arising the partition of $[0,T]$, $\{ t_m=m \, k\}_{m=0}^{M}$. For simplicity and without loss of generality, we fix the viscosity constant $\nu=1$.

 We consider a time-scheme, where in each time step $m$, given $
({\bf f} ^m ,{\bf g}_s^m) _{m=1}^M $  approximations of data
 $( {\bf f}, {\bf g}_s ) $  at 
$t=t_m$, a sequence $({\bf u}_h^m , u_{3,h}^m , p_{s,h}^m )_m$
will be computed, as approximations to a regular solution $( {\bf u} , u_3 , p_s )$
 of $(R)$ at $t=t_m$.

\medskip

 We are going to consider a fractional-step scheme,
 splitting the three main difficulties of the problem, namely:
\begin{itemize}
\item the computation of the vertical velocity,
\item
the non linear convective terms, $({\bf U}\cdot \nabla){\bf u} $ (in
particular, the vertical convection $u_3\partial_z {\bf u}$ is more singular 
 than in the Navier-Stokes case),
\item
the restriction  $ \gradH  \cdot \langle {\bf u}\rangle \, =  \, 0$
in $S \times (0,T)$ related to the surface pressure $p_s$.
\end{itemize}
 Indeed, given $({\bf u}_h^{m},p_{s,h}^{m})$,
  firstly  the vertical velocity $u_{3,h}^{m}$ is computed
 as function of  $\gradH \cdot  {\bf u}_h^{m }$, afterwards
  an intermediate horizontal velocity ${\bf  u}_h^{m+1/2}$ is obtained by 
  using  convective terms,
  and finally 
 $({\bf u}_h^{m+1} , p_{s,h}^{m+1})$ is computed solving a linear hydrostatic Stokes  problem (associated to the restriction  $\gradH \cdot \langle {\bf
u}_h^{m+1}\rangle=0$). This method can be called a ``viscosity-splitting'' scheme, because the diffusion terms  are considered in the last two steps (see Sub-steps 1 and 2 below).


\medskip

Let ${\bf X}_h$, $ Y_h$ and $ Q_h$ be  three adequate families of FE spaces which  it will be precised below. Then, the fully discrete scheme is defined as follows:

\medskip

\noindent {\bf Initialization: } Let ${\bf u}_h^0 \in {\bf X}_h $ be
an approximation of ${\bf u}_0$.
\medskip

\noindent
{\bf Step of time} $m+1$:

\begin{description}
\item {\bf Sub-step 0: } Given ${\bf u}_h^{m} \in {\bf X}_h $,
 compute  $ u_{3,h}^{m}\in Y_h$  such  that, 
$$\Big ( \partial_z  u_{3,h}^{m} , \partial_z y_h
\Big ) = - \Big ( \gradH \cdot {\bf u}^m_h ,  \partial_z y_h
\Big ) \quad \forall y_h \in  Y_h.
 \leqno{(S_0)_h^{m}}$$

\item {\bf Sub-step 1: } Given ${\bf U}_h^m=({\bf u}_h^m ,
u_{3,h}^m)\in {\bf X}_h\times Y_h$,
  compute  ${\bf u}_h^{m+1/2}\in {\bf X}_h$  such  that, 
$$
\left\{
\begin{array}{l} \displaystyle
\frac{1}{k}\Big({\bf
u}_h^{m+1/2}-{\bf u}_h^{m},  {\bf v}_h\Big) + c\Big( {\bf U}_h^{m},
{\bf u}_h^{m+1/2},  {\bf v}_h\Big)
 +  \Big(\nabla \,  {\bf u}_h^{m+1/2},  \nabla \, {\bf v}_h \Big)
 \\
 = \Big\langle {\bf f}^{m+1},  {\bf v}_h \Big\rangle_\Omega
 +\Big\langle{\bf g}_s^{m+1},
  {\bf v}_h \Big\rangle_{\Gamma_s} 
  \quad \forall {\bf v}_h \in {\bf X}_h.
\end{array}
\right.
\leqno{(S_1)^{m+1}_h}
$$
\item {\bf Sub-step 2: } Given ${\bf u}_h^{m+1/2}\in {\bf X}_h$,  compute
 $({\bf u}_h^{m+1},p_h^{m+1})\in {\bf X}_h  \times Q_h $, such
 that 
 \end{description}
$$
\left \{ \begin{array}{l} \displaystyle\frac{1}{k}\Big({\bf
u}_h^{m+1}-{\bf u}_h^{m+1/2}, {\bf v}_h \Big)
 +  \Big(\nabla  ({\bf u}_h^{m+1}-{\bf u}_h^{m+1/2}), \nabla  {\bf v}_h\Big)
 - \Big( p_h ^{m+1}, \gradH \cdot  \langle{\bf v}_h\rangle\Big)_S =  0 ,\\
\noalign{\smallskip} \Big(\gradH \cdot \langle{\bf
u}_h^{m+1}\rangle, q_h \Big)_S = 0 
\quad \forall ({\bf v}_h, q_h)  \in {\bf X}_h \times Q_h .
\end{array} \right.
\leqno{(S_2)^{m+1}_h}
$$



A linear $z$-elliptic problem must be  
computed in Sub-step~0,  a decoupled linear
convection-diffusion problem in Sub-step~1 and  a (generalized) Hydrostatic Stokes problem in Sub-step~2, which will be well-defined if a particular {\sl Inf-Sup} stability condition holds, see \textbf{(H1)} below.

On the other hand, in order to do the effective computation of the integrals on the $2D$ surface $S$ given in  $(S_2)_h^{m+1}$, $\Big( p_h ^{m+1}, \gradH \cdot  \langle{\bf v}_h\rangle\Big)_S$,   it will be necessary to use vertically structured grids.

\medskip

 With respect to the consistency of this scheme, adding $(S_1)_h^{m+1}$ and $(S_2)_h^{m+1}$ one has:
$$\left\{
   \begin{array}{l}
\displaystyle \frac{1}{k} \Big({\bf u}_h^{m+1}-{\bf u}_h^{m},{\bf
v}_h\Big) +c \Big({\bf U}_h^{m} ,{\bf u}_h ^{m+1/2}, {\bf
v}_h\Big) + \Big(\nabla \,  {\bf u}_h^{m+1},  \nabla \, {\bf
v}_h\Big)\\ -\Big(p_h^{m+1} , \gradH \cdot \langle {\bf v}_h
\rangle \Big)_S = \Big\langle {\bf f}^{m+1},  {\bf v}_h
\Big\rangle_\Omega+\Big\langle{\bf g}_s^m,
  {\bf v}_h \Big\rangle_{\Gamma_s}
\end{array}\right.
 \leqno{(S)_h^{m+1}}
$$
This formulation will be used to prove the convergence of the
scheme.

Although  viscosity-splitting schemes solve a mixed method which request higher computational cost than classical projection (segregated) schemes, they 
 present some  advantages because viscosity-splitting schemes have not numerical boundary layer for the pressure due to  diffusion terms are also  included in the free-divergence projection step (here Sub-step~2),  which let to impose exact boundary conditions for the velocity.

 On the other hand,   viscosity-splitting schemes improve the numerical
treatment  of Euler (or semi-implicit) mixed schemes \cite{teman3}, because  Sub-step 2  is a  symmetric  problem which can be formulated as a minimization problem, and then it can be approximated by using many numerical optimization solvers, as the Uzawa's method, the Augmented Lagrangian method, etc.  (\cite{handbook-Glowinski}).

\subsection{Choice of adequate  Finite Element spaces}

We restrict ourselves to the case where the surface
domain $S\subset \mathbb{R}^2$ has a polygonal boundary and the
bottom function $D$ is globally continuous and locally $P_1$,
hence the $3D$ domain $\Omega$ has  polygonal boundary. Moreover, the
following hypothesis will be imposed about $\Omega$:
\begin{description}
\item[${\bf (H0)}$] {\sl Regularity of the Domain}: Assume  $\Omega \subset \mathbb{R}^3$
 such that the Hydrostatic Stokes Problem
 has ${\bf H}^2 (\Omega) \times H^1 (S)$ regularity for (horizontal) velocity and
 pressure respectively. For this, the following hypothesis of existence of sidewalls should 
  be imposed (see \cite{Z}):
 $$
D \ge D_{\min} >0\quad \hbox{in $S$.}
 $$
\end{description}


To discretize the domain $\Omega$, let ${\cal T}_h(\Omega)$ be a regular
and quasi-uniform triangulation of $\Omega$ (with elements $K\in
{\cal T}_h(\Omega)$) and  ${\cal T}_h(S)$ its associated
triangulation of $S$ with elements $T\in {\cal T}_h(S)$. Assume that ${\cal T}_h(\Omega)$ is a vertically structured mesh, and then  each element  $K\in {\cal T}_h (\Omega)$ is projected onto only one  element
 $T \in {\cal T}_h (S)$. Some references  about how to construct  vertically structured
meshes can be seen in \cite{cg}, \cite{c-rg-04} and \cite{c-rg-05} by
using the so-called {\it iso-$\sigma$ layers} or in \cite{sal} by
using a $\P_0$ approximation on the bottom.

  We consider three families of FE  spaces:
  ${\bf X}_h \subset {\bf H}_{b,l}^1
(\Omega) $ for the horizontal velocity, $ Y_h \subset  H_0
(\partial_z )$ for the vertical velocity and $Q_h \subset L_0^2 (S)$
for the pressure. Functions of ${\bf X}_h$ are globally continuous,
whereas functions in $Y_h $ must be globally continuous only with respect
to vertical direction and $Q_h$ could be furnished by discontinuous
functions.

\vspace{0.5cm}

The following hypotheses are required about  $({\bf X}_h , Y_h , Q_h)  $:
\begin{description}
\item[${\bf (H1)}$]  $({\bf X}_h ,Q_h)$  satisfies the so called ``hydrostatic {\sl Inf-Sup"} 
condition (\cite{cg}): There exists $ \beta >0 $ (independent of $
h$) such that, for all $ h>0, $
$$
\sup_{{\bf v}_h\in {\bf X}_h \setminus\{0\}}
\frac{\Big(q_h, \gradH \cdot \langle{\bf v}_h \rangle  \Big)_S}{|
\gradH  {\bf v}_h |+ \|\partial_z {\bf v}_h \|_{L^3}
 }
\ge \beta \  \|q_h \|_{L^2(S)}, \quad \forall q_h \in Q_h \setminus\{0\}.
$$
    \item[${\bf (H2)}$]   The following inverse inequalities hold:
    for each ${\bf v}_h\in {\bf X}_h$,
$$\|{\bf v}_h \|_{L^2_z L^4_{\bf x}} \le C\, h^{-1/2} |{\bf
v}_h|,\quad \|{\bf v}_h \|_{L^\infty_z L^4_{\bf x}} \le C\,
h^{-1/2} \|{\bf v}_h\|_{L^\infty_z L^2_{\bf x}},\quad \|{\bf
v}_h\|_{L_z^2 L_{\bf x}^{\infty}} \le C\, h^{-1} |{\bf v}_h| ,
$$
$$\|{\bf v}_h\|_{L^3} \le C\, h^{-1/2} |{\bf v}_h|, \quad \|{\bf
v}_h\| \le C\, h^{-1} |{\bf v}_h| , \quad \|{\bf v}_h\|_{W^{1,6}}
\le C\, h^{-1} \|{\bf v}_h \| .$$
 \item[${\bf (H3)}$] The approximation properties of order $O(h^l)$ (for $l=1$ or $2$):
$$
 h^{-1}| {\bf v}- I_h {\bf v} | +  \| {\bf v}- I_h  {\bf v} \|
  \le C\, h^{l} \, \| {\bf v} \|_{{\bf
H}^{l+1}} \qquad \forall\, {\bf v} \in {\bf H}^{l+1} (\Omega)\cap
{\bf V},
$$
$$
   | {\bf v}- I_h  {\bf v} | \le C\, h^{l} \, \| {\bf v}
\|_{{\bf H}^l} \quad \forall\, {\bf v} \in {\bf H}^l \cap {\bf V},
$$
$$
 \|q- J_h q \|_{L^2(S)} \le C\, h^l \, \| q\|_{H^l(S)} \qquad \forall\, q \in H^l
(S) \cap L_0^2 (S),
$$
\begin{equation}\label{estHz}
 \|v_3- K_h v_3 \|_{H(\partial_z)}  \le C\, h^l \, \|  v_3 \|_{H^{l+1}}
 \quad \forall\, v_3 \in
H^{l+1} (\Omega ) \cap H_0 (\partial_z ),
\end{equation}
 where $(I_h , J_h): {\bf V}\times L^2_0(S) \rightarrow {\bf X}_h \times Q_h $ is the hydrostatic Stokes projector defined as:
$$(I_h {\bf v},J_h q)\in {\bf X}_h \times Q_h : \left\{
\begin{array}{l}
 \Big(\nabla (I_h {\bf v}-{\bf v} ), \nabla \, {\bf v}_h \Big) - \Big(J_h q -q , \gradH \cdot
\langle{\bf v}_h  \rangle \Big)_S = 0 \quad \forall\, {\bf v}_h \in {\bf X}_h , \\
  \Big(\gradH \cdot \langle I_h {\bf v}  \rangle , q_h \Big)_S = 0 \quad \forall\, q_h
\in Q_h ,
\end{array}
\right. $$
and  $ K_h:  H_0 (\partial_z)
\rightarrow Y_h $ is  the $H_0 (\partial_z)$-projector onto $Y_h$ defined as:
$$K_h v_3\in Y_h \ :\ \Big ( \partial_z (K_h v_3 -v_3),
\partial_z y_h \Big )= 0 \, \quad \forall\, y_h \in Y_h .
$$
\end{description}

There are some possibilities  to define  $({\bf
X}_h,Y_h,Q_h)$ satisfying {\bf (H1)-(H3)}. For instance, to
approximate the pressure, we can consider
$$
Q_h = \{q_h\in C^0(\overline{S})\mbox{ : } q_h|_T\in
\P_1[x],\forall\, T\in{\cal{T}}_h(S)\}\cap  L^2_0(S).
$$
 To choose $({\bf X}_h,Y_h)$  there are at least two possibilities (\cite{cg}, \cite{gd}):
\begin{enumerate}
\item \textit{Taylor-Hood} ($O(h^2)$ approximation, $l=2$);  locally $ \P _2[x,z]$ by  tetrahedrons and globally continuous FE.

\item \textit{Mini-element} ($O(h)$ approximation,  $l=1$). Let 
${\cal{P}}(K)=\P _1 [x,z] \oplus\alpha_K\lambda_1\lambda_2\lambda_3
\lambda_4$ with $\alpha_K\in\mathbb{R}$ and $\lambda_i\in \P
_1(K)$ such that $\lambda_i(a_j)=\delta_{ij}$, with $a_j$ the
vertices of the tetrahedron  $K$. Then, we consider
\begin{eqnarray*}
  {\bf X}_h &=&  \{ {\bf v}_h\in C^0(\overline{\Omega})\mbox{ : } {\bf
v}_h|_K\in {\cal{P}}(K),\forall K\in{\cal{T}}_h\}^2\cap {\bf
H}^1_{b,l}(\Omega), \\
  Y_h &=& \{y_h\in C^0(\overline{\Omega})\mbox{ : } y_h|_K\in \P
_1[x,z],\forall\, K\in{\cal{T}}_h(\Omega)\} \cap H_0(\partial_z).
\end{eqnarray*}
\end{enumerate}

For vertically structured grids furnished by  prisms, there are other 
possibilities to choose ${\bf X}_h$. For instance, using a bubble by prism or a bubble by each column of
vertical prisms as in \cite{gd}. Notice that these possibilities
are not stables for  the Navier-Stokes problem. 

On the other hand, considering triangulations where each element is 
 a right prism, other possibilities to choose $Y_h$ are (\cite{sal}):
\begin{itemize}
    \item locally $\P_0[{\bf x}]\otimes \P_1[z]$ and globally $z$-continuous (for $l=1$),
    \item locally $\P_1[{\bf x}]\otimes \P_2[z]$ and globally continuous and $z$-$C^1$ (for $l=2$).
\end{itemize}
For these  anisotropic FE approximations of $Y_h$, we have  not seen in the literature any approximation result like hypothesis (\ref{estHz}).  

\section{Stability and convergence towards weak solutions of $(R)$}

In this Section, we are going to study  stability properties of
 scheme $(S_0)_h$-$(S_2)_h$ and  convergence towards  weak solutions of problem $(R)$. For this,   we will obtain some
 a priori (stability) estimates  that let us pass to the
limit (convergence), where  compactness results must be applied to
``control'' the limit in the (nonlinear) convective terms.

Fixed the (uniform) partition of $[0,T]$ of diameter $k=T/M$: $\{
t_m=mk \}_{m=0}^M$, for a given vector $u=(u^m)_{m=0}^M$ with
$u^m\in X$ ($X$ being a Banach space), let us to introduce the
following notation for discrete in time norms:
$$
\| u \|_{l^2(X)}=\left(k \sum_{m=0}^M \| u^m \|_X^2 \right)^{1/2}
\quad \hbox{and} \quad \| u \|_{l^\infty(X)}= \max_{m=0,\dots ,M}
\| u^m \|_X .
$$

 In this section, we    consider the following weak regularity on the data
 $$
{\bf f}\in  L ^2 (0,T;{\bf H}_{b,l}^{-1}(\Omega)), \quad {\bf g}_s
\in L^2 (0,T;{\bf H}^{-1/2}(\Gamma _s))\q \hbox{and} \q {\bf u}_{0}
\in {\bf H}, \leqno{{\bf (WR)}}
 $$ and we choose
$$
{\bf f}^{m+1}=\frac{1}{k}\int_{t_m}^{t_{m+1}} {\bf f}(t)\, dt,\q
{\bf g}_s^{m+1}=\frac{1}{k}\int_{t_m}^{t_{m+1}} {\bf g}_s(t)\, dt.
$$

\begin{lemma}\label{apriori_est} (Stability)
Assume $({\bf WR})$ and ${\bf (H1)}$. If $({\bf u}_h^0)$  is bounded in
${\bf L}^2$, then the following estimates hold:
\begin{equation}\label{reg-debil-esq}
 \| {\bf u}_h^{m+1} \| _{l^{\infty}(L^2)\cap l^2 (H^1)} +
 \| \umedio_h \| _{l^{\infty}(L^2)\cap l^2 (H^1)} \le C,
\end{equation}
\begin{equation}\label{reg-diferencias}
 \| {\bf u}_h^{m+1}- \umedio_h  \| _{l^{2}(L^2)}+
\|  \umedio_h -{\bf u}_h^m \| _{l^{2}(L^2)}\le C\, k^{1/2},
\end{equation}
 \begin{equation}\label{reg-debil-vvert}
\|  u_{3,h}^{m+1} \| _{l^2 (H(\partial_z)\cap L^\infty_zL^2_{\bf
x})}\le C.
\end{equation}
\end{lemma}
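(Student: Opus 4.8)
The plan is to derive the three a priori estimates by testing the scheme with the discrete velocities themselves and exploiting the skew-symmetry of the trilinear form $c$. The key structural feature I would rely on is that summing Sub-steps 1 and 2 telescopes the viscous and time-difference contributions in a way that produces an energy identity with controllable error terms.

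First I would test $(S_1)_h^{m+1}$ with ${\bf v}_h = \umedio_h$ and $(S_2)_h^{m+1}$ with ${\bf v}_h = \uu_h$ (and $q_h = p_h^{m+1}$). In Sub-step~1, the crucial point is that $c({\bf U}_h^m, \umedio_h, \umedio_h)=0$ by skew-symmetry — this is exactly why the splitting is designed with the convective term carrying the intermediate velocity, so the nonlinear term contributes nothing to the energy. In Sub-step~2, testing with $\uu_h$ makes the pressure term vanish because $(\gradH\cdot\langle\uu_h\rangle, p_h^{m+1})_S=0$ by the discrete divergence constraint. I would then use the polarization identities of the form
\begin{equation*}
(a-b,a) = \tfrac12\bigl(|a|^2 - |b|^2 + |a-b|^2\bigr)
\end{equation*}
applied to both the $L^2$ time-differences and the $\nabla$-differences. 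Adding the two tested equations yields an identity bounding $|\uu_h|^2$, the viscous terms $k\,\|\umedio_h\|^2 + k\,\|\uu_h\|^2$, and the squared jumps $|\umedio_h - {\bf u}_h^m|^2$ and $|\uu_h - \umedio_h|^2$ against $|{\bf u}_h^m|^2$ plus the forcing terms $\langle{\bf f}^{m+1},\umedio_h\rangle_\Omega + \langle{\bf g}_s^{m+1},\umedio_h\rangle_{\Gamma_s}$.

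Next I would bound the right-hand forcing terms using Young's inequality, absorbing the velocity factors into the viscous dissipation: the ${\bf H}^{-1}_{b,l}$ and $H^{-1/2}(\Gamma_s)$ duality pairings are estimated by $\tfrac12\|{\bf f}^{m+1}\|^2_{{\bf H}^{-1}_{b,l}} + \varepsilon\,\|\umedio_h\|^2$ and similarly for the surface stress via a trace inequality. Under hypothesis $({\bf WR})$, the choice ${\bf f}^{m+1}=\tfrac1k\int_{t_m}^{t_{m+1}}{\bf f}\,dt$ gives $k\sum_m \|{\bf f}^{m+1}\|^2_{{\bf H}^{-1}_{b,l}} \le \|{\bf f}\|^2_{L^2(0,T;{\bf H}^{-1}_{b,l})}$ by Jensen/Cauchy–Schwarz, so the accumulated forcing is bounded independently of $k,h$. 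Summing over $m$ from $0$ to $r$ and invoking the discrete Gronwall inequality (Lemma~\ref{GronwallD}, part a) — here with $\gamma_m\equiv 0$ since no term depends multiplicatively on $a_m$ — gives (\ref{reg-debil-esq}) for $\uu_h$ in $l^\infty(L^2)\cap l^2(H^1)$ and, as a byproduct of the telescoped jump terms, the $O(k^{1/2})$ bound (\ref{reg-diferencias}). The estimate for $\umedio_h$ in the $l^\infty(L^2)$-norm follows from the triangle inequality combined with (\ref{reg-diferencias}).

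Finally, for (\ref{reg-debil-vvert}), I would test $(S_0)_h^m$ with $y_h = u_{3,h}^m$, obtaining $|\partial_z u_{3,h}^m|^2 = -(\gradH\cdot{\bf u}_h^m, \partial_z u_{3,h}^m)$, so that $\|u_{3,h}^m\|_{H_0(\partial_z)}\le |\gradH\cdot{\bf u}_h^m|\le \|{\bf u}_h^m\|$ by Cauchy–Schwarz; the vertical Poincaré inequality (\ref{dpoincare}) then controls the full $H(\partial_z)$-norm, and the vertical Gagliardo–Nirenberg inequality (\ref{du_4}) controls the $L^\infty_z L^2_{\bf x}$-norm by $|\partial_z u_{3,h}^m|$. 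Squaring, multiplying by $k$, and summing then bounds $\|u_{3,h}\|_{l^2(H(\partial_z)\cap L^\infty_z L^2_{\bf x})}$ by $\|{\bf u}_h\|_{l^2(H^1)}$, which is already controlled by (\ref{reg-debil-esq}). The main obstacle, and the step I would treat most carefully, is the bookkeeping in the energy identity: ensuring that the $\nabla$-difference terms from Sub-step~2 combine correctly with the time-difference terms so that all squared jumps appear with the right (nonnegative) signs and the viscous dissipation is not lost; the nonlinear term itself causes no difficulty precisely because skew-symmetry eliminates it, which is the whole point of the viscosity-splitting design.
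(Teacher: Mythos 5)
Your proposal is correct and takes essentially the same route as the paper's proof: you test $(S_1)_h^{m+1}$ with ${\bf u}_h^{m+1/2}$ and $(S_2)_h^{m+1}$ with ${\bf u}_h^{m+1}$ (taking $q_h=p_h^{m+1}$), kill the convective and pressure terms via skew-symmetry and the discrete divergence constraint, combine the polarization identities into the same energy identity, sum over $m$, and obtain (\ref{reg-debil-vvert}) by testing $(S_0)_h$ with the discrete vertical velocity and applying (\ref{du_4}), exactly as the paper does. The only cosmetic difference is your appeal to the discrete Gronwall lemma, which, as you yourself note ($\gamma_m\equiv 0$), reduces to the plain summation from $m=0$ to $r$ that the paper performs.
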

\begin{proof}
Estimates (\ref{reg-debil-esq}) and (\ref{reg-diferencias}) can be deduced  making
$$\Big ( (S_1)_h^{m+1} , \umedio_h \Big ) + \Big (
(S_2)_h^{m+1} ,\uu_h \Big )$$
 and
using that
$$c\Big({\bf U}_h^m,\umedio_h ,\umedio_h \Big)=0 \quad\mbox{and}\quad 
\Big( p_h ^{m+1}, \gradH \cdot  \langle{\bf u}_h^{m+1}\rangle\Big)_S=0
.$$
Indeed, one arrives at
$$ \frac{1}{2\,k} \Big ( |{\bf u}_h^{m+1}|^2 - |{\bf u}_h^{m}|^2 + |\umedio_h- {\bf u}_h^{m}|^2 + |{\bf u}_h^{m+1}- \umedio_h|^2  \Big )
$$
$$
+ \frac{1}{2} \Big ( \| \umedio_h \|^2 +  \|{\bf u}_h^{m+1}\|^2 +  \|{\bf u}_h^{m+1}- \umedio_h\|^2  \Big ) $$
$$= \Big \langle {\bf f}^{m+1} ,\umedio_h \Big \rangle + \Big \langle {\bf g}_s^{m+1} ,\umedio_h \Big \rangle_S. 
$$
Then, adding from $m=0$ to $r$ (with any $r<M$), we obtain the desired estimates  (\ref{reg-debil-esq}) and (\ref{reg-diferencias}).

On the other hand, taking $y_h = 
u_{3,h}^{m+1} \in Y_h$ as test function in $(S_0)_h^{m+1}$, one has $| \partial_z  u_{3,h}^{m+1} |
\le | \nabla_{\bf x}  \cdot \uu_h  |$. Therefore,
$(\ref{reg-debil-vvert})$ is a consequence of   estimate
(\ref{reg-debil-esq}) and  inequality (\ref{du_4}
).
\end{proof}

\medskip

 Now, we define the following sequences of functions (defined for all
 $t\in [0,T])$):
\begin{itemize}
\item[$\diamond$]
 ${\bf u}_{k,h}^{(i)} : [0,T]\rightarrow {\bf H}_{b,l}^1 (\Omega)$,
  such that
${\bf u}_{k,h}^{(i)}(t) ={\bf u}_h^{m+i/2}$
  if $t\in(t_m,t_{m+1}]$, $i=0,1,2$.
\item[$\diamond$]
$u_{3,k,h}^{(0)} : [0,T]\rightarrow L^2 (\Omega)$,
  such that
 $ u_{3,k,h}^{(0)}(t) = u_{3,h}^{m}$
  if $t\in(t_m,t_{m+1}]$.
\item[$\diamond$]
${\bf u}_{k,h}:[0,T] \rightarrow {\bf H}_{b,l}^1 (\Omega)$,
 continuous, linear by  subintervals and
${\bf u}_{k,h}(t_m)={\bf u}_h^m$.
\end{itemize}

\begin{theorem}[Convergence] \label{convergencia}
Assume ${\bf (WR)}$ and ${\bf (H0)}$-${\bf (H2)}$, then there exists
a subsequence  $(k',h')$ of $(k,h)$, with $ (k',h') \downarrow 0 $,
and a weak solution ${\bf U}=({\bf u},u_3)$
 of $(R)$ in $(0,T)$, such that:
$({\bf u}_{k',h'}^{(i)})$ (for each $i=0,1,2$) and $({\bf
u}_{k',h'})$ converge to  ${\bf u}$
 strongly  in $L^2 (0,T;{\bf L}^2(\Omega))$, weakly-star in
$L^{\infty} (0,T;{\bf L}^2(\Omega))$ and weakly in $L^2 (0,T;{\bf
H}_{b,l}^1(\Omega))$, whereas   $(u_{3,k',h'}^{(0)})$ converges to
$u_3$ weakly in $L^2(0,T;H_0(\partial_z))$.
\end{theorem}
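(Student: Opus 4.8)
The plan is to follow the standard compactness scheme for nonlinear parabolic problems, adapted to the anisotropic hydrostatic setting. First I would collect the \emph{a priori} bounds from Lemma~\ref{apriori_est}: the uniform bounds \eqref{reg-debil-esq} give that all the families $({\bf u}_{k,h}^{(i)})_{i=0,1,2}$ and $({\bf u}_{k,h})$ are bounded in $L^\infty(0,T;{\bf L}^2)\cap L^2(0,T;{\bf H}_{b,l}^1)$, while \eqref{reg-debil-vvert} bounds $(u_{3,k,h}^{(0)})$ in $L^2(0,T;H_0(\partial_z))$ and, via \eqref{du_4}, in $L^2(0,T;L^\infty_z L^2_{\bf x})$. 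From these reflexive/weak-$*$ bounds I extract a subsequence $(k',h')\downarrow 0$ and limits ${\bf u}$ and $u_3$ converging weakly in $L^2(0,T;{\bf H}_{b,l}^1)$, weakly-$*$ in $L^\infty(0,T;{\bf L}^2)$ for the horizontal velocities and weakly in $L^2(0,T;H_0(\partial_z))$ for the vertical velocity. The estimates \eqref{reg-diferencias} force all three horizontal families ${\bf u}_{k,h}^{(0)},{\bf u}_{k,h}^{(1)},{\bf u}_{k,h}^{(2)}$ and the piecewise-linear interpolant ${\bf u}_{k,h}$ to share the same weak limit ${\bf u}$, since their pairwise $l^2(L^2)$ differences are $O(k^{1/2})\to 0$.

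Next I would upgrade weak to strong $L^2(0,T;{\bf L}^2)$ convergence for ${\bf u}$, which is what is needed to pass to the limit in the convective trilinear form. The natural tool is an Aubin--Lions--Simon compactness argument applied to the continuous interpolant ${\bf u}_{k,h}$: I already have a spatial bound in $L^2(0,T;{\bf H}_{b,l}^1)$ (compactly embedded in ${\bf L}^2$), so it remains to derive a fractional-in-time estimate on ${\bf u}_{k,h}$ uniform in $(k,h)$. I would obtain this by testing the summed scheme $(S)_h^{m+1}$ against discrete-divergence-free test functions and bounding the discrete time derivative $\delta_t{\bf u}_h^{m+1}$ in a weak (negative-order) norm, using the skew-symmetry of $c$ to control the convective contribution by the $L^2(0,T;{\bf H}^1)$ bound together with the anisotropic Gagliardo--Nirenberg inequalities. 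This yields equicontinuity in time (e.g.\ an estimate on $\|{\bf u}_{k,h}(\cdot+\tau)-{\bf u}_{k,h}\|$) and hence the desired strong convergence.

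With strong convergence of ${\bf u}_{k',h'}$ in hand I would pass to the limit in $(S)_h^{m+1}$ tested against a fixed smooth divergence-free function and integrated against a smooth scalar time weight: the linear viscous term passes by weak convergence of the gradients, the time-derivative term by an integration-by-parts in the discrete sum (the summation-by-parts shifting the difference quotient onto the smooth test function), and the data terms converge because ${\bf f}^{m+1},{\bf g}_s^{m+1}$ are the averages of ${\bf f},{\bf g}_s$. The pressure term drops out on the divergence-free test space. The decisive step is the convective term $c({\bf U}_{k,h}^{(0)},{\bf u}_{k,h}^{(1)},{\bf v})$: I would use the skew-symmetric form that moves the derivative onto the test function, so that one only needs strong convergence of the horizontal velocity (already obtained) multiplied against the weakly convergent gradient of ${\bf v}$ and the weakly convergent vertical velocity $u_3$. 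I must be careful that ${\bf U}_{k,h}^{(0)}=({\bf u}_{k,h}^{(0)},u_{3,k,h}^{(0)})$ and ${\bf u}_{k,h}^{(1)}$ carry different half-step indices, but since all horizontal families share the limit ${\bf u}$ and the differences are $O(k^{1/2})$ this causes no trouble. Finally I would verify in the limit the defining relation between $u_3$ and $\gradH\cdot{\bf u}$ by passing to the limit in Sub-step~0's weak formulation against a fixed $w\in H_0(\partial_z)$.

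The main obstacle I expect is precisely controlling the vertical convection $u_3\,\partial_z{\bf u}$ in the limit, because $u_3\in L^2(0,T;H_0(\partial_z))$ lacks horizontal $H^1$-regularity; this is exactly why the skew-symmetric form $c(\cdot,\cdot,\cdot)$ with derivative on the test function and the anisotropic estimates \eqref{du_4} (giving $u_3\in L^2(0,T;L^\infty_z L^2_{\bf x})$) are indispensable, and why the test functions in the weak formulation of $(R)$ are taken in ${\bf V}\cap{\bf W}^{1,3}_{b,l}\cap{\bf L}^\infty$ rather than merely in ${\bf V}$.
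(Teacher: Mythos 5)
Your overall architecture (a priori bounds from Lemma~\ref{apriori_est}, weak limits identified through the $O(\sqrt{k})$ difference estimates, a fractional-in-time bound, and limit passage in the skew-symmetric trilinear form) matches the paper up to the decisive compactness step, and there your plan has a genuine gap. When you test the scheme against discrete-divergence-free functions to bound the discrete time derivative, the negative norm you obtain is the dual norm of ${\bf V}_h=\{ {\bf v}_h \in {\bf X}_h \ /\ \Big(\gradH \cdot \langle {\bf v}_h \rangle , q_h \Big)_{S} =0\ \forall q_h \in Q_h\}$, a space that \emph{moves with} $h$; you cannot trade it for a fixed norm by testing with smooth divergence-free ${\bf v}$, because $(S)_h^{m+1}$ holds only for ${\bf v}_h\in{\bf X}_h$, and projecting a smooth ${\bf v}$ into ${\bf X}_h$ destroys the discrete divergence constraint, reintroducing the pressure term, for which no a priori bound is available under the weak regularity ${\bf (WR)}$. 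The Aubin--Lions--Simon machinery \cite{simon} does not apply to a fractional estimate taken in an $h$-dependent family of dual norms --- the paper states exactly this after Lemma~\ref{est-deriv-t}. The paper's fix, absent from your plan, has two ingredients: first, the projector $R_h:{\bf V}_h\to{\bf V}$ with $\|R_h{\bf u}_h\|_{V'}\le \|{\bf u}_h\|_{V_h'}+C\,h$ and $|R_h{\bf u}_h-{\bf u}_h|\le C\,h\,\|\gradH\cdot\langle{\bf u}_h\rangle\|_{L^2(S)}$ (following \cite{g-j}, and using the $H^2$ regularity of ${\bf (H0)}$), which converts (\ref{est-constante}) into a fractional estimate in the \emph{fixed} norm ${\bf V}'$ up to an additive $O(h)$ perturbation; second, the compactness-by-perturbation theorem of Az\'erad--Guill\'en \cite{AG2}, which tolerates precisely that $O(h)$ term and yields strong $L^2({\bf L}^2)$ convergence of $R_h{\bf u}_{k,h}^{(2)}$, hence of ${\bf u}_{k,h}^{(2)}$ and, by the $O(\sqrt{k})$ differences, of all the velocity families. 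Without these two steps your claimed ``equicontinuity in time \dots and hence the desired strong convergence'' does not follow.

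A second, smaller omission: the dual-norm bound of the vertical convection $c\Big(u_{3,h}^m,\umedio_h,A_h^{-1}(\cdot)\Big)$ in Lemma~\ref{est-deriv-t} is not a routine consequence of skew-symmetry plus anisotropic Gagliardo--Nirenberg; it requires $\|A_h^{-1}{\bf v}_h\|_{W^{1,3}}\le C\,|{\bf v}_h|$, i.e.\ the discrete $W^{1,6}$ regularity of the inverse hydrostatic Stokes operator (Lemma~\ref{regw16}), whose proof is where ${\bf (H0)}$ and the inverse inequality $\|{\bf v}_h\|_{W^{1,6}}\le C h^{-1}\|{\bf v}_h\|$ of ${\bf (H2)}$ are actually consumed. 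Since ${\bf (H0)}$--${\bf (H2)}$ are hypotheses of the theorem, the fact that your plan never invokes them is itself a signal that the key quantitative mechanism is missing.
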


\begin{proof}
Owing to definition of the functions ${\bf
u}_{k,h}^{(i)},u_{3,k,h}^{(0)}$ and ${\bf u}_{k,h}$, Lemma
\ref{apriori_est} says:
$$({\bf u}_{k,h}^{(i)})_{k,h} \mbox{  and } ({\bf
u}_{k,h})_{k,h} \ \mbox{ are bounded in  } \
 L^{\infty}({\bf L}^2)\cap L^2({\bf H}_{b,l}^1),
\q \forall\, i=0,1,2, $$
\begin{equation}\label{estim-vert-veloc}
\label{1b} (u_{3,k,h}^{(0)})_{k,h}  \
 \mbox{ is bounded in  } \ L^2(H_0(\partial_z)).
\end{equation}

\medskip

On the other hand, from 
(\ref{reg-diferencias}), there exists
$C=C(\nu, {\bf u}_0 , {\bf
  f},{\bf g}_s )>0$
 such that, $ \forall \,i,j=0,1,2, $
\begin{equation}
\label{3}
\|{\bf u}_{k,h}^{(i)}-{\bf u}_{k,h}^{(j)}\|
 _{L^{2}({\bf L}^2 )}^2
\leq  \, C \, k,\quad
\|{\bf u}_{k,h}^{(i)}-{\bf u}_{k,h}\|
 _{L^{2}({\bf L}^2)}^2 \leq  \, C \, k .
\end{equation}

Therefore, there exist  subsequences of
 $({\bf u}_{k,h}^{(i)})_{k,h} \mbox{  and } ({\bf
u}_{k,h})_{k,h}$ (denoted in the same way) and a limit function
${\bf u}$ 
verifying the
following weak convergences as $(h,k)\rightarrow 0$:
$$ \begin{array}{c}
({\bf u}_{k,h}^{(i)})_{k,h} \rightarrow {\bf u}, \quad ({\bf
u}_{k,h})_{k,h} \rightarrow {\bf u} \quad \mbox{ in } \left \{
\begin{array}{l}
L^2 (0, T; {\bf H}_{b,l}^1 (\Omega))-\mbox{weak}\\
L^{\infty} (0, T; {\bf L}^2 (\Omega))-\mbox{weak}*
\end{array}\right.
\end{array}
$$
Note that, thanks to (\ref{3}), the uniqueness of the limits ${\bf
u}_{k,h}^{(i)} = {\bf u}$, for each $i=0,1,2$ hold.

Moreover, from (\ref{estim-vert-veloc})
$$
u_{3,k,h}^{(0)}\to u_3\q \hbox{in $L^2(H_0(\partial_z))$.}
$$

Finally, $(S)_h^{m+1}$ can be rewritten (eliminating the pressure)
as follows:
\begin{equation}
\label{5}
\Big( \partial_t  {\bf u}_{k,h},{\bf v}_h \Big)  +c \Big({\bf
U}_{k,h}^{(0)},{\bf u}_{k,h}^{(1)},{\bf v}_h  \Big) +\Big(\nabla{\bf
u}_{k,h}^{(2)},\nabla {\bf v}_h \Big) = \Big({\bf f}^{m+1} ,{\bf
v}_h \Big)+ \Big({\bf g}_s^{m+1} ,{\bf v}_h \Big)_{\Gamma_s} 
\end{equation}

for each ${\bf v}_h \in {\bf X}_h\cap{\bf V}$.

On the other hand, $(S_0)_h^{m}$ is rewritten as
\begin{equation}\label{55}
\Big(\partial_z u_{3,k,h}^{(0)}, \partial_z w_h \Big)= -\Big(\gradH
\cdot {\bf u}_{k,h}^{(0)} , \partial_z w_h \Big) \quad \forall \,
w_h \in Y_h
\end{equation}

To take  limits in  (\ref{5}), we need for instance  compactness of
 $({\bf u}_{k,h}^{(1)})_{k,h}$ in $L^2({\bf L}^2(\Omega))$.
But,  owing to (\ref{3}), it suffices to obtain compactness of
$({\bf u}_{k,h}^{(2)})_{k,h}$ in $L^2({\bf L}^2(\Omega))$. Assuming
this compactness, taking   (\ref{5})-(\ref{55}) in $(k',h')$, the
pass to the  limit when
  $(k',h') \rightarrow 0$ can be realized by a standard way, concluding that
  (${\bf u},u_3)$ is a weak solution  of the continuous problem $(R)$.

Therefore, it suffices to  get compactness of $({\bf
u}_{k,h}^{(2)})_{k,h}$ in $L^2 ({\bf L}^2 (\Omega))$. For this, let us introduce
$${\bf V}_h =\{ {\bf v}_h \in {\bf X}_h \ /\ \Big(\gradH \cdot
\langle {\bf v}_h \rangle , q_h \Big)_{S} =0,\,  \forall q_h \in Q_h
\}$$
  and  the operator $A_h^{-1}: {\bf V}_h \rightarrow {\bf V}_h $
  defined as the inverse of the discrete ``hydrostatic''
 Stokes problem: given
${\bf u}_h \in {\bf V}_h$,   
\begin{equation}\label{prob-hydrostatic}
 A_h^{-1}{\bf u}_h\in
{\bf V}_h \mbox{ such that } \, \, \Big( \nabla \,  A_h^{-1}{\bf
u}_h , \nabla{\bf v}_h \Big)
   = \Big({\bf u}_h , {\bf v}_h \Big)  \quad \forall\, {\bf v}_h \in
{\bf V}_h
\end{equation}
Taking ${\bf v}_h =
A_h^{-1}{\bf u}_h   \in {\bf V}_h$ in (\ref{prob-hydrostatic}),
\begin{equation}\label{dual-norms}
\quad |\nabla \, A_h^{-1}{\bf u}_h |^2  = \Big({\bf u}_h
,A_h^{-1}{\bf u}_h \Big) \le C \, \| {\bf u}_h \|_{V_h^{'}} \, | \nabla A_h ^{-1} {\bf u}_h
|, \quad \forall\, {\bf v}_h \in {\bf V}_h 
\end{equation}
hence
$$ |\nabla A_h^{-1} {\bf u}_h | \le C\,  \| {\bf u}_h \|_{V_h^{'}}.$$
To obtain the inverse bound, we take any ${\bf v}_h \in {\bf V}_h $ in
(\ref{prob-hydrostatic}), then
$$\Big({\bf u}_h, {\bf v}_h \Big)= \Big(\nabla A_h ^{-1} {\bf u}_h , \nabla {\bf
  v}_h \Big) \le | \nabla A_h ^{-1} {\bf u}_h | \, |\nabla {\bf v}_h |
\qquad \forall {\bf v}_h \in {\bf V}_h, $$ hence
$$ \| {\bf u}_h \| _{V_h^{'}}  \le |\nabla \,  A_h^{-1} \, {\bf
    u}_h |.$$

Now,  to obtain compactness of $ {\bf u}_{k,h}^{(2)}$ we follow an
argument of  \cite{g-j}. First of all, we prove  the following
result.
\begin{lemma}  \label{est-deriv-t}
Under hypothesis of Theorem \ref{convergencia}, one has
\begin{equation}\label{fraccionaria-est}
\int_0^{T-\delta} \| {\bf u}_{k,h}^{(2)} (t+\delta) -{\bf u}_{k,h}^{(2)}
(t) \| _{V_h^{'}}^2 \, dt \le C\, \delta , \q \forall \,\delta :
0<\delta <T,
\end{equation}
where $ C>0$ is independent of $k,h$ and $\delta$.
\end{lemma}
\begin{proof}
Since ${\bf u}_{k,h}^{(2)}$ is a piecewise constant function, it
suffices to suppose that $\delta$ is proportional to time step $k$,
i.e., $\delta=r\, k$ for any $r=1,..., M$. In fact, to obtain
(\ref{fraccionaria-est}),  it suffices to prove
\begin{equation}\label{est-constante}
k \sum_{n=0}^{M-r} \| {\bf u}_h^{n+r}- {\bf u}_h^n \|_{V_h^{'}}^2
\le C (r \, k)  , \q \forall \, r=1,\dots,M.
\end{equation}

Taking  $k{\bf v}_h$, for any
${\bf v}_h\in {\bf V}_h$, as test functions in $(S_1)_h^{m+1}$ and adding from $m=n$ to $n+r-1$, 
\begin{equation}\label{constante2}
\begin{array}{rcl}
\displaystyle\Big({\bf u}_h ^{n+r} - {\bf u}_h ^n , {\bf v}_h \Big)&
=&\displaystyle -k\, \sum_{m=n}^{n+r-1} c\Big({\bf U}_h^m,
\umedio_h, {\bf v}_h \Big) - k \, \sum_{m=n}^{n+r-1} \Big(\nabla
\uu_h, \nabla {\bf v}_h \Big)
\\
& +& k\, \displaystyle\sum_{m=n}^{n+r-1} \left\{ \Big(p_h^{m+1} ,
\gradH \cdot \langle {\bf v}_h \rangle \Big)_S + \Big\langle {\bf
f}^{m+1},  {\bf v}_h \Big\rangle_\Omega+\Big\langle{\bf g}_s^m,
  {\bf v}_h \Big\rangle_{\Gamma_s} \right\}. \end{array}
\end{equation}

Now, taking  ${\bf v}_h =k A_h ^{-1} ( {\bf
u}_h^{n+r}- {\bf u}_h^n)$ as  test function in (\ref{constante2}), using
(\ref{dual-norms}) and
 adding from $n=0$ to $M-r$, 
$$\begin{array}{lcl}
k\, \displaystyle\sum_{n=0}^{M-r} \|{\bf u}_h ^{n+r} - {\bf u}_h ^n \|_{V_h'}^2
&=& -k^2\,\displaystyle \sum_{n=0}^{M-r}\,\displaystyle \sum_{m=n}^{n+r-1} c\Big({\bf
U}_h^m, \umedio_h, A_h^{-1} ({\bf u}_h ^{n+r} - {\bf u}_h ^n )
\Big)
\\
&- & k^2 \, \displaystyle  \sum_{n=0}^{M-r}\, \displaystyle \sum_{m=n}^{n+r-1} \Big(\nabla
\uu_h,
\nabla \, A_h^{-1} ({\bf u}_h ^{n+r} - {\bf u}_h ^n ) \Big) \\
 & +& k^2 \displaystyle\sum_{m=n}^{n+r-1} \left\{ \Big\langle
{\bf f}^{m+1}, A_h^{-1} ({\bf u}_h ^{n+r} - {\bf u}_h ^n)
\Big\rangle_\Omega+\Big\langle{\bf g}_s^{m+1},
  A_h^{-1} ({\bf u}_h ^{n+r} - {\bf u}_h ^n) \Big\rangle_{\Gamma_s} \right\}
  \\ \nonumber
  &:=&J_1 +J_2+J_3 .
\end{array}
$$
Now, we have to bound the $J_i$ terms. The bound of $J_3$ is rather
standard. Since $J_2$ is easier to bound than $J_1$, we only
analyze the  more complicate term of $J_1$, which is the vertical convection:
\begin{eqnarray*}
 c \Big(u_{3,h}^m,
\umedio_h, A_h^{-1} ({\bf u}_h ^{n+r} - {\bf u}_h ^n)\Big) &=&
-\frac{1}{2}\Big(\partial_z  u_{3,h}^m\, \umedio_h, A_h^{-1} ({\bf
u}_h
^{n+r} - {\bf u}_h ^n)\Big) \\
   &-& \Big(u_{3,h}^m \,\umedio_h,\partial_z A_h^{-1} ({\bf u}_h
^{n+r} - {\bf u}_h ^n) \Big) =I_1+I_2
\end{eqnarray*}

Since bound $I_1$ is easier  than $I_2$ (in fact, the $I_1$ term
is the classical isotropic term  appearing in the Navier-Stokes
framework, see \cite{g-re-NM}), we only bound $I_2$ using inequality (\ref{du_4})  as follows:
$$I_2 \le \|u_{3,h}^m\|_{L_z^{\infty}  L_{\bf x}^2}\|\umedio_h\|_{L_z^2 L_{\bf x}^6}
\|\partial_z A_h^{-1} ({\bf u}_h ^{n+r} - {\bf u}_h
^n)\|_{L_z^2L_{\bf x}^3} \le C\,\| {\bf u}_h^m  \| \, \| \umedio_h\|
\, \|  A_h^{-1} ({\bf
  u}_h ^{n+r} - {\bf u}_h ^n )\| _{W^{1,3}}$$
To bound the term $\|  A_h^{-1} ({\bf
  u}_h ^{n+r} - {\bf u}_h ^n )\| _{W^{1,3}}$, we are going to use 
the following Lemma  (see Appendix for a proof):
\begin{lemma}\label{regw16} Assuming {\bf (H0)} and the inverse
 inequality $\|{\bf v}_h\|_{W^{1,6}}\le C\frac{1}{h}\|{\bf v}_h\| $ given in {\bf (H2)}, then
 $$ \| A_h ^{-1} {\bf v}_h \|_{W^{1,6}} \le C\, |{\bf
  v}_h |, \quad \forall\, {\bf v}_h\in {\bf V}_h.$$
 \end{lemma}
In particular,  $\| A^{-1} ({\bf
  u}_h ^{n+r} - {\bf u}_h ^n ) \|_{W^{1,3}} \le C | {\bf
  u}_h ^{n+r} - {\bf u}_h ^n |$ hence the following bound holds:
$$J_1 \le C\,k^2  \sum_{n=0}^{M-r}\, \sum_{m=n}^{n+r-1}\| {\bf u}_h^m  \| \,
 \| \umedio_h\| \, | {\bf u}_h ^{n+r} - {\bf u}_h ^n |
$$
Then, applying Fubini's discrete rule, we obtain
$$ J_1\le C\, k^2 \sum_{m=0}^{M-1} \| {\bf u}_h ^m \| \, \| \uu_h \| \,
\sum_{n=\overline{m-r+1}} ^{\overline{m}}  |  {\bf
  u}_h ^{n+r} - {\bf u}_h ^n |
$$
where
$$\overline{m}=
\left  \{
\begin{array}{lr}
0 & \mbox{ if } m<0, \\
m & \mbox{ if } 0\le m\le M-r,  \\
M-r & \mbox{ if } m>M-r.
\end{array}
\right.
$$
Since $ |\overline{m} -\overline{m-r+1} | \le r$, then
$\sum_{n=\overline{m-r+1}} ^{\overline{m}}  |  {\bf
  u}_h ^{n+r} - {\bf u}_h ^n | \le C\, r$. Finally,
  since $k \sum_{m=0}^{M-1} \| {\bf u}_h ^m \| \, \| \uu_h \|\le
  C$, one arrives at  $J_1
\le C \,(r\, k)$. On the other hand, one also has $J_2+J_3 \le C\,
(r\, k)$ and the proof of Lemma~\ref{est-deriv-t} is finished.
\end{proof}


Note that the bound for the fractional derivative in time (\ref{fraccionaria-est}) 
has been obtained in the norm ${\bf V}'_h$ which moves
with respect to the space parameter $h$. But,  the compactness
results (see for instance J. Simon \cite{simon}) does not work in
these conditions. Then, we will use the already cited argument
of \cite{g-j} in order  to find a fixed norm where the time
fractional derivative can be bounded. For this, we consider the
orthogonal projector onto ${\bf V}$: 
$$R_h :{\bf V}_h \rightarrow {\bf V} \hbox{
defined as } \Big(\nabla (R_h {\bf v}_h -  {\bf v}_h) , \nabla {\bf
w}\Big)=0,\quad \forall\, {\bf w} \in {\bf V},$$
 which 
has the following properties (arguing  as in
\cite{g-j}):
$$\| R_h {\bf u}_h \|_{H^1} \le \| {\bf u}_h\|_{H^1} \q (\mbox{
 $H^1$-stability}),$$
$$ |R_h {\bf u}_h -{\bf u}_h|\le C\, h \| \gradH \cdot \langle{\bf
  u}_h \rangle \|_{L^2 (S)}
\q (\mbox{$L^2$-error estimate}),
$$
 and
$$ \|R_h {\bf u}_h \| _{V'} \le \|{\bf u}_h \|_{V_h'} + C\, h.$$
For the second estimate, the $H^2$ regularity of the hydrostatic Stokes
 problem with second member $R_h {\bf u}_h -{\bf u}_h$ must be 
used, and for  the last estimate, it uses the orthogonal projector onto ${\bf V}_h$
$P_h :{\bf V} \rightarrow {\bf V}_h$ defined as
  $\Big( P_h {\bf v} -{\bf v} , {\bf v}_h\Big)=0,\ \forall \,{\bf v}_h \in {\bf V}_h$
   (see \cite{g-j} for more details).

 From here,
using (\ref{est-constante}), one has
$$k \sum_{n=0}^{M-r} \| R_h ({\bf u}_h ^{n+r}-{\bf u}_h^n )\|_{V'}^2
\le C\,k \,\sum_{n=0}^{M-r} \| {\bf u}_h ^{n+r}-{\bf u}_h^n
\|_{V_h'}^2 + C\, h  \le C ( r\, k +h).$$ The above inequality can
be written as
$$\int_0^{T-\delta} \| R_h \, {\bf u}_{h,k}^{(2)} (t+\delta) -
 R_h \, {\bf u}_{h,k}^{(2)} (t) \|_{V'}^2 \, dt \le C\,( \delta + h).$$
Now, we can apply a compactness (by perturbations) result due to P.~Az\'erad and F.~Guill\'en \cite{AG2}, obtaining
 that $ R_h {\bf u}_{k,h}^{(2)} \rightarrow {\bf u}$ in  $L^2(0,T; {\bf
  L}^2)$-strong. From here, arguing again as in  \cite{g-j},  one can deduce 
 ${\bf u}_{k,h}^{(2)} \rightarrow {\bf u}$ in  $L^2(0,T; {\bf
  L}^2)$-strong, and the proof of Theorem~\ref{convergencia} is finished.
\end{proof}

\section{Error estimates with respect to problem $(R)$}
 In this section, we will obtain  optimal error estimates (for the velocity
 and pressure) with respect to  a sufficiently
 regular  solution  $\{{\bf u}, u_3 ,p_s \}$ of problem  $(R)$.


In order to obtain these error estimates, the following constraint between the time step  $k$ and the mesh size $h$ will be  assumed:  
$$ k \le h^2 .
 \leqno{\bf (H)}$$
\subsection{Regularity hypotheses}

 The  following regularity hypotheses for the solution
 $({\bf U}=({\bf u}, u_3),p_s)$ of $(R)$ will be imposed:
\begin{itemize}
\item To obtain order $O(\sqrt{k}+h^l)$  in $l^{\infty}
({\bf L}^2)\cap l^2 ({\bf H}^1)$  for both velocities:
$$  {\bf U}\in L ^{\infty}( {\bf H}^{l+1}), \quad {\bf u}_t \in   L
^{2}( {\bf H}^{l}),  \quad  {\bf U}_t \in L^2({\bf L}^2) , 
  \quad    p_s \in
L^2 (H^l), \ \sqrt{t} \,
   {\bf u}_{tt} \in L ^2 ({{\bf H}_{b,l}^{-1}}).
   \leqno{\bf (R1)}$$

\item
To obtain order $O(k+h^l)$  in
$l^{\infty} ({\bf L}^2)\cap l^2 ({\bf H}^1)$ for the end-of-step velocity:  
$$
{\bf u}_{tt} \in L^{2} ({\bf V'}) . \leqno{\bf (R2)} $$
   \item To get order $O(\sqrt{k}+h^l)$  in $l^{2} ({\bf L}^2)$ for the  time discrete
derivative of end-of-step velocity, in $l^{\infty} ({\bf H}^1)$ for
end-of-step velocity  and  in $ l^2(L^2(S))$ for
pressure, 
 $$
 {\bf U}_{t} \in L^{2} ({\bf H}^{1}),
\q
{\bf u}_{tt} \in L^{2} ({\bf L}^{2}). \leqno{\bf (R3)}
$$
\item To get order $O(\sqrt{k}+h^2)$ ($l=2$) in $l^{\infty} ({\bf L}^2)
 \cap l^2 ({\bf H}^1)$ for the time discrete
derivative of velocities,
  $$\begin{array}{c}     \partial_t p_s\in 
 L^2(H^l),\,
 {\bf U}_{t} \in L^{\infty} ({\bf L}^{3})
\cap L^2 ({\bf H}^{l+1}),  \q {\bf u}_t \in L^{\infty}({\bf H}^2 ),\,
  {\bf u}_{tt}\in L^2({\bf H}^l), \\
\,
{\bf U}_{tt} \in L^{2} ({\bf L}^{2}),\,   \sqrt{t} {\bf u}_{ttt} \in
L^{2} ({\bf H}_{b,l}^{-1}). \end{array}
\leqno{\bf (R4)}$$
  \item To obtain order $O(k+h^2)$ ($l=2$)  in  $l^{\infty} ({\bf L}^2)
 \cap l^2 ({\bf H}^1)$ for the time discrete derivative of end-of-step velocity and in $ l^2(L^2(S))$ for the pressure,
$$ {\bf U}_t \in L^{\infty} ({\bf H}^1), \quad  {\bf u}_{ttt} \in L^{2} ({\bf V}').
 \leqno{\bf (R5)} $$

\end{itemize}

\subsection{Problems related to the spatial errors}

We will present  an error analysis for the fully discrete scheme
$(\umedio_h , \uu _h , p_h^{m+1})$ as an approximation of $( {\bf u}
(t_{m+1}), {\bf u} (t_{m+1}), p(t_{m+1})$. Consequently, we consider
the following errors:
$$\emedio  ={\bf u}(t_{m+1}) -\umedio_h, \qquad \ee ={\bf u}(t_{m+1})  -\uu _h ,
\qquad e_{p}^{m+1} = p_s(t_{m+1}) - p_h^{m+1} ,$$
$$
e_3^{m+1} = u_3 (t_{m+1}) - u_{3,h}^{m+1}, \qquad {\bf E}^{m+1} =
(\ee , e_3^{m+1}).
$$

These errors can be decomposed   as follows (splitting
  interpolation and discrete parts):
$$\emedio  = \ee_i   + \emedio_h  , \qquad \ee =
\ee_i +\ee_h , \qquad e_{p}^{m+1} = e_{p,i}^{m+1} +
e_{p,h}^{m+1} ,$$
$$ e_3^{m+1} =e_{3,i}^{m+1} +e_{3,h}^{m+1} , \qquad
{\bf E}^{m+1}=  {\bf E}^{m+1}_h +{\bf E}^{m+1}_i .$$
  Concretely
$$  \ee_i = {\bf u} (t_{m+1}) -I_h {\bf u} (t_{m+1})
 \q\mbox{and} \q
 \ee_h = I_h {\bf u } (t_{m+1})  -\uu_h ,
$$
$$
\emedio_h = I_h {\bf u} (t_{m+1}) -\umedio_h ,
$$
$$  e_{p,i}^{m+1} = p(t_{m+1}) -  J_h p (t_{m+1})
\q\mbox{and} \q
  e_{p,h}^{m+1} = J_h p(t_{m+1}) - p_h^{m+1} ,
$$
$$ u_{3,i}^{m+1} = u_3(t_{m+1}) -  K_h u_3 (t_{m+1})
 \q\mbox{and} \q
 e_{3,h}^{m+1} = K_h u_3(t_{m+1}) - u_{3,h}^{m+1} ,
$$
 $${\bf E}^{m+1}_i = ( \ee_i , e_{3,i}^{m+1})
  \q\mbox{and} \q
{\bf E}^{m+1}_h = (\ee_h , e_{3,h}^{m+1}) .$$

 Using  the following Taylor expansion with integral rest of a function $\phi =
\phi (t)$:
$$
\phi(t+k)-\phi (t)=\phi'(t+k) \, k - \int_t ^{t+ k} (s-t)
\phi''(s)\, ds,
$$
 and the variational problem  verified for an exact solution $({\bf
U},p_s)$ at $t=t_{m+1}$ of $(R)$, one has:
$$
\left\{   \begin{array}{l} \displaystyle
\Big(\displaystyle\frac{1}{k}({\bf u}(t_{m+1})-{\bf u}(t_m)),{\bf
v}\Big) + c \Big({\bf U}(t_{m}),{\bf u}(t_{m+1}), {\bf v}\Big) +
\Big(\nabla {\bf u}(t_{m+1}), \nabla {\bf v}\Big) 
\\
-\Big(p_s(t_{m+1})
,\gradH \cdot \langle {\bf v} \rangle \Big)_S
= \Big\langle{\bf f}(t_{m+1}), {\bf v}\Big\rangle_\Omega
+\Big\langle{\bf g}_s(t_{m+1}), {\bf v}\Big\rangle_{\Gamma_s}
+\Big(\mathcal{E}^{m+1},{\bf v}\Big) ,\q \forall\, {\bf v}\in{\bf
W}_{b,l}^{1,3}\cap{\bf L}^\infty,
\\
\Big(\gradH \cdot \langle {\bf u}(t_{m+1}) \rangle,q \Big)_S=0 ,\q
\forall\, q\in L^2_0(S),
\\ \Big ( \partial_z \, u_3(t_{m+1}) , \partial_z v_3 \Big ) = -
\Big ( \gradH \cdot {\bf u}(t_{m+1}) ,  \partial_z v_3 \Big ) ,\q
\forall \, v_3\in H_0(\partial_z) ,\end{array}
\right.\leqno{(R)^{m+1}}
$$
where $\mathcal{E}^{m+1}:=-\displaystyle\frac{1}{k}
\displaystyle\int_{t_m}^{t_{m+1}} (t-t_m) \,   {\bf u}_{tt} (t)\, dt
  -\dis\left( \int_{t_m}^{t_{m+1}}  {\bf U}_t\cdot
    \gradV\right) {\bf u} (t_{m+1})$ is the consistency error.

For simplicity, we assume
${\bf f}\in C([0,T];{\bf H}_{b,l}^{-1})$ and
 ${\bf g}_s\in C([0,T];{\bf H}^{-1/2}(\Gamma_s))$ and we choose
$$
 {\bf f}^{m+1}={\bf f} (t_{m+1}) \quad \mbox{ and } \quad
 {\bf g}_s^{m+1} ={\bf g}_s (t_{m+1}).
$$
Then,  the data errors $ {\bf f} (t_{m+1})-{\bf f}^{m+1}$ and
${\bf g}_s (t_{m+1})-{\bf g}_s^{m+1}$  vanish.

Comparing $(R)^{m+1}$ with $(S_0)^{m}_h$ and $(S_1)^{m+1}_h$,
 the following variational problems
for the spatial errors $e_{3,h}^m$ and $\emedio_h$ hold:
$$\Big(\partial_z  e_{3,h} ^{m} , \partial_z y_h
\Big)  =
-\Big( \gradH \cdot ({\bf e}_h^m + {\bf e}_i^m), \partial_z
y_h
\Big)\quad \forall y_h
\in Y_h , \leqno{(E_0)_h ^{m}}
$$
$$
 \left \{ \begin{array}{l}
\dis\frac{1}{k}\Big(\emedio_h - {\bf e}_h^{m} , {\bf v}_h \Big)
 + \Big( \nabla\,   \emedio_h , \nabla\,  {\bf v}_h \Big)-
 \Big( p_s(t_{m+1}), \gradH\cdot\langle{\bf v}_h\rangle \Big)_S\\
= \dis{\bf NL}^{m+1}({\bf v}_h)  + \Big(\mathcal{E}^{m+1} , {\bf
v}_h \Big)
-\Big(\delta_t\ee_i, {\bf v}_h
\Big)-\Big(\nabla \ee_i, \nabla {\bf v}_h \Big) , \quad \forall\,
{\bf v}_h \in {\bf X}_h ,
\end{array} \right.
\leqno{(E_1)^{m+1}_h}
$$
 where $\delta_t\ee_i=\displaystyle\frac{1}{k}\Big(\ee_i - {\bf e}_i^{m}\Big)$
 and
$${\bf NL}^{m+1}({\bf v}_h)= -c\Big({\bf E}_h^m+{\bf E}_i^m  , {\bf u}(t_{m+1}) ,
{\bf v}_h \Big)- c\Big( {\bf U}_h^m , \emedio_h + \ee_i, {\bf v}_h
\Big).
$$
On the other hand, adding and substrating $I_h{\bf u}(t_{m+1})$ to
$(S_2)_h^{m+1}$ one has 
$$
\left \{ \begin{array}{l} \displaystyle\frac{1}{k}\Big(  {\bf
e}_h^{m+1}- \emedio_h , {\bf v}_h \Big)
  +  \Big( \nabla  ( \ee_h -\emedio_h), \nabla {\bf v}_h \Big)
= - \Big( p_h^{m+1} ,\gradH \cdot  \langle{\bf v}_h \rangle \Big)_S
     \\
\noalign{\smallskip} \Big(\gradH \cdot \langle \ee_h \rangle, q_h
\Big)_S=0 ,
\end{array} \right.
\leqno{(E_2)^{m+1}_h}
$$
for each $({\bf v}_h,q_h)  \in {\bf X}_h\times Q_h$. 

Due to the choice of the projector $K_h$, $\Big(\partial_z  e_{3,i}
^{m+1} , \partial_z  y_h \Big)=0 $, hence it is not appear in $(E_0)_h^{m+1}$. Since the same discrete space
for $\umedio_h$ and  $\uu_h$ has been chosen,  the interpolation
error depending on $\emedio_h-{\bf e}_h^{m}$ in $(E_1)_h^{m+1}$ is
$\ee_i - {\bf e}_i^{m}$ and the interpolation error depending on
${\bf e}_h^{m+1}-\emedio_h$  in $(E_2)_h^{m+1}$ is zero. Finally,
since $ \Big(\gradH \cdot \langle I_h {\bf u}(t_{m+1}) \rangle, q_h
\Big)_S=0$, the corresponding interpolation error $ \Big(\gradH
\cdot \langle {\bf e}_i^{m+1} \rangle, q_h \Big)_S=0$, hence it is not appear in $(E_2)_h^{m+1}$.

 Therefore, adding  $(E_1)_h^{m+1}$
and  $(E_2)_h^{m+1}$, one arrives at:
$$
  \left \{ \begin{array}{l}
\dis\frac{1}{k}\Big(\ee_h -{\bf e}_h^m , {\bf v}_h \Big)+ \Big(
\nabla \ee_h ,\nabla  {\bf v}_h \Big) - \Big( e_{p,h}^{m+1} , \gradH
\cdot \langle{\bf v}_h \rangle \Big)_S
\\ \q
=\dis -\frac{1}{k}\Big(\ee_i - {\bf e}_i^{m} , {\bf v}_h \Big)+{\bf
NL}^{m+1} ({\bf v}_h )
  + \Big(\mathcal{E}^{m+1} , {\bf v}_h \Big), 
  \\
\Big(\gradH \cdot \langle\ee_h\rangle ,  q_h  \Big)_S=0  .
\end{array} \right.
\leqno{(E)_h^{m+1}}
$$
Due to the choice of the   interpolation operator $(I_h,J_h)$
related to the hydrostatic Stokes problem,  the interpolation error
$ \Big(\nabla \ee_i , \nabla {\bf v}_h\Big) + \Big(e_{p,i}^{m+1} ,
\gradH \cdot \langle {\bf v}_h \rangle \Big)_S =0$, hence it does not
appear in $(E)_h^{m+1}$.

\subsection{Estimates for the vertical velocity}

 From $ (E_0)_h^{m}$, the following estimate holds:
\begin{equation}\label{est-errorvertical}
| \partial _z e_{3,h}^m |  \le C ( | \gradH \cdot {\bf e}_h^m | + |
\gradH \cdot {\bf e}_i^m | ).
\end{equation}
Therefore, by using inequality (\ref{du_4}),
\begin{equation}\label{est2-errorvertical}
\| e_{3,h}^m \|_{L_z^{\infty}L_{\bf x}^2}^2 \le C ( \| {\bf e}^m_h
\| + \| {\bf e}^m_i \|).
\end{equation}
 On the other hand, the approximation property (\ref{estHz}) and the regularity $u_3\in L^\infty(0,T;H^{l+1})$ imply:
\begin{equation}\label{est-interpo-vertical}
| \partial_z e_{3,i} ^m | \le C\, h^l\, \|  u_3 (t_m) \|
_{H^{l+1}}
\le C\, h^l.
\end{equation}
 Therefore, by using again (\ref{du_4}),
\begin{equation}\label{est-interpo-vertical-bis}
\| e_{3,i} ^m \|_{L_z^{\infty}L_{\bf x}^2} \le C\, h^l.
\end{equation}

\subsection{$O(\sqrt{k}+h^l)$  error estimates for both velocities
in $l ^{\infty}( {\bf L}^2) \cap l ^{2}( {\bf H}^1)$}

\begin{theorem}\label{dt1}
We assume {\bf (H0)}-{\bf (H3)}, {\bf (R1)} and  $|{\bf e}_h^0 | \le
C\, h^l $. Then, there exists $k_0>0$ such that for any $k\le k_0$,  the following error
estimates hold
\begin{equation}\label{dtr1}
\| {\bf e}_h^{m+1/2}  \| _{ l ^{\infty}( {\bf L}^2) \cap l ^{2}(
{\bf H}^1)}
 + \| {\bf e}_h^{m+1}  \| _{ l ^{\infty}( {\bf L}^2) \cap
l ^{2}( {\bf H}^1)} \le C\, (\sqrt{k} + h^l ),
\end{equation}
\begin{equation}\label{dtr1bis}
\| {\bf e}_h^{m+1/2} -{\bf e}_h^m \| _{ l ^{2}( {\bf L}^2)}
 + \| {\bf e}_h^{m+1} - \emedio_h \| _{ l ^{2}( {\bf L}^2)} \le C\,
 \sqrt{k} (\sqrt{k} + h^{l} ) .
\end{equation}
\end{theorem}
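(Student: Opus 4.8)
The plan is to reproduce, at the level of the errors, the energy identity used in the stability Lemma~\ref{apriori_est}. Concretely, I would test the intermediate-velocity error equation $(E_1)_h^{m+1}$ with ${\bf v}_h={\bf e}_h^{m+1/2}$ and the projection error equation $(E_2)_h^{m+1}$ with ${\bf v}_h={\bf e}_h^{m+1}$, and add them. Since ${\bf e}_h^{m+1}\in{\bf V}_h$ is discretely divergence-free, the discrete pressure term $(p_h^{m+1},\gradH\cdot\langle{\bf e}_h^{m+1}\rangle)_S$ vanishes, and the skew-symmetry $c({\bf U}_h^m,{\bf e}_h^{m+1/2},{\bf e}_h^{m+1/2})=0$ kills the diagonal convective contribution exactly as in the stability proof. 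Using $2(a-b,a)=|a|^2-|b|^2+|a-b|^2$ for both the time-difference and the viscous terms, multiplying by $k$ and summing from $m=0$ to $r$, the left-hand side becomes the telescoped $\frac12|{\bf e}_h^{r+1}|^2$, the difference families $\frac12\sum_m\big(|{\bf e}_h^{m+1/2}-{\bf e}_h^m|^2+|{\bf e}_h^{m+1}-{\bf e}_h^{m+1/2}|^2\big)$, and the dissipation $\frac k2\sum_m\big(\|{\bf e}_h^{m+1/2}\|^2+\|{\bf e}_h^{m+1}\|^2+\|{\bf e}_h^{m+1}-{\bf e}_h^{m+1/2}\|^2\big)$. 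Everything then reduces to bounding the right-hand side by $C(k+h^{2l})$ plus a Gronwall term $Ck\sum_m\gamma_m|{\bf e}_h^m|^2$ with $k\sum_m\gamma_m\le C$.

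The genuinely new term, absent in the stability estimate, is the \emph{continuous} surface-pressure term $(p_s(t_{m+1}),\gradH\cdot\langle{\bf e}_h^{m+1/2}\rangle)_S$ inherited from $(E_1)_h^{m+1}$, which does not vanish because ${\bf e}_h^{m+1/2}$ is not discretely divergence-free. I would split it by writing ${\bf e}_h^{m+1/2}={\bf e}_h^{m+1}+({\bf e}_h^{m+1/2}-{\bf e}_h^{m+1})$ together with $p_s=(p_s-J_hp_s)+J_hp_s$. The piece against ${\bf e}_h^{m+1}$ pairs $p_s-J_hp_s$ with a discretely divergence-free field, so $(J_hp_s,\gradH\cdot\langle{\bf e}_h^{m+1}\rangle)_S=0$ and the remainder is an $O(h^l)\,\|{\bf e}_h^{m+1}\|$ interpolation contribution by {\bf (H3)}. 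The piece against ${\bf e}_h^{m+1/2}-{\bf e}_h^{m+1}$ I would integrate by parts on $S$, bounding it by $C\,|\gradH p_s(t_{m+1})|_{L^2(S)}\,|{\bf e}_h^{m+1/2}-{\bf e}_h^{m+1}|$ via $|\langle{\bf v}\rangle|_{L^2(S)}\le C|{\bf v}|$. This last term is the splitting error: by Young it is absorbed as $\tfrac1{4k}|{\bf e}_h^{m+1}-{\bf e}_h^{m+1/2}|^2+Ck\,|\gradH p_s(t_{m+1})|_{L^2(S)}^2$ into the difference term on the left, and the residual sums to $Ck\,\|p_s\|_{L^2(H^l)}^2=O(k)$. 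This $O(k)$ contribution to $|{\bf e}_h^{r+1}|^2$ is precisely what limits the temporal rate to $\sqrt k$ (rather than $k$) for both velocities in this first estimate.

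I would then estimate the convective, consistency and interpolation terms in ${\bf NL}^{m+1}$, $\mathcal E^{m+1}$, $\delta_t{\bf e}_i^{m+1}$ and $(\nabla{\bf e}_i,\nabla{\bf e}_h^{m+1/2})$. The interpolation terms are immediate from {\bf (H3)}; the consistency error is controlled in $l^2({\bf H}_{b,l}^{-1})$ by the Taylor-remainder bound and the regularities ${\bf U}_t\in L^2({\bf L}^2)$ and $\sqrt t\,{\bf u}_{tt}\in L^2({\bf H}_{b,l}^{-1})$ of {\bf (R1)} (the weight $\sqrt t$ absorbing the singularity at $t=0$ on the first step), yielding a contribution no larger than $O(k)$. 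The decisive estimates are the trilinear ones in ${\bf NL}^{m+1}=-c({\bf E}_h^m+{\bf E}_i^m,{\bf u}(t_{m+1}),{\bf e}_h^{m+1/2})-c({\bf U}_h^m,{\bf e}_i^{m+1},{\bf e}_h^{m+1/2})$, where the \emph{a priori} bounds of Lemma~\ref{apriori_est} for $({\bf u}_h^m)$ supply the Gronwall coefficients $\gamma_m\sim\|{\bf u}_h^m\|^2$ with $k\sum_m\gamma_m\le C$. The main obstacle is the vertical convection $\int e_{3,h}^m\,\partial_z{\bf u}(t_{m+1})\cdot{\bf e}_h^{m+1/2}$, which I would treat by the anisotropic Hölder splitting $\|e_{3,h}^m\|_{L_z^\infty L_{\bf x}^2}\,\|\partial_z{\bf u}(t_{m+1})\|_{L_z^2L_{\bf x}^4}\,\|{\bf e}_h^{m+1/2}\|_{L_z^2L_{\bf x}^4}$, then using (\ref{est2-errorvertical})--(\ref{est-interpo-vertical-bis}) for $\|e_{3,h}^m\|_{L_z^\infty L_{\bf x}^2}$, the horizontal Gagliardo--Nirenberg inequality for the last two factors, and Young's inequality to transfer a half-power of $\|{\bf e}_h^{m+1/2}\|$ onto the dissipation while leaving $\|{\bf e}_h^m\|^2$ (plus $O(h^{2l})$) on the right.

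Collecting all bounds, choosing $\varepsilon$ small to absorb the $\|{\bf e}_h^{m+1/2}\|^2$, $\|{\bf e}_h^{m+1}\|^2$ and the difference terms into the left-hand side, and taking $k_0$ small enough that the condition $k\gamma_m<1$ of Lemma~\ref{GronwallD} holds, I arrive at an inequality of the form $|{\bf e}_h^{r+1}|^2+k\sum_m(\|{\bf e}_h^{m+1/2}\|^2+\|{\bf e}_h^{m+1}\|^2)+\sum_m(|{\bf e}_h^{m+1/2}-{\bf e}_h^m|^2+|{\bf e}_h^{m+1}-{\bf e}_h^{m+1/2}|^2)\le|{\bf e}_h^0|^2+Ck\sum_m\gamma_m|{\bf e}_h^m|^2+C(k+h^{2l})$. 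The discrete Gronwall Lemma~\ref{GronwallD}, together with $k\sum_m\gamma_m\le C$ and the hypothesis $|{\bf e}_h^0|\le C\,h^l$, then gives the $l^{\infty}({\bf L}^2)\cap l^2({\bf H}^1)$ bound (\ref{dtr1}) of order $\sqrt k+h^l$; the difference sums, which carry no factor $k$ and are already bounded by $C(k+h^{2l})$, give exactly (\ref{dtr1bis}) after restoring the $l^2({\bf L}^2)$ normalization $k\sum_m|\cdot|^2$.
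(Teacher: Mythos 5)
Your proposal is correct and follows essentially the same route as the paper's proof: testing $(E_1)_h^{m+1}$ with $\emedio_h$ and $(E_2)_h^{m+1}$ with $\ee_h$, telescoping the energy identities, handling the surface-pressure term by pairing $J_hp_s$ with a discretely divergence-free error and integrating the remainder by parts against the splitting difference (the $O(k)$ residual that caps the temporal rate at $\sqrt{k}$), bounding the vertical convection with the anisotropic H\"older/Gagliardo--Nirenberg estimates (\ref{est2-errorvertical})--(\ref{est-interpo-vertical-bis}) plus inverse inequalities, and closing with the discrete Gronwall lemma. The only deviations are cosmetic: you split the pressure term against $\ee_h$ where the paper uses ${\bf e}_h^m$ (both are discretely divergence-free), and you invoke Gronwall weights $\gamma_m\sim\|{\bf u}_h^m\|^2$ where the paper's bounds for the terms $I_1$--$I_5$ reduce to constant coefficients after the error splitting ${\bf U}_h^m={\bf U}(t_m)-{\bf E}^m$.
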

 Notice that in this result, the constraint $({\bf H})$ on parameters $(k,h)$
 is not necessary although $k$ small enough must be imposed.

\begin{proof} The main idea is  to make
 $2\, k\displaystyle\sum_{m=0}^{M-1} \left \{  \Big((E_1)_h^{m+1},
\emedio_h \Big) +\Big((E_2)_h^{m+1},{\bf e}_h^{m+1} \Big) \right
\}$.

In fact, making $2\, k\,  \Big((E_1)_h^{m+1},  \emedio_h \Big)$ and
taking into account that $ c\Big( {\bf U}_h^m , \emedio_h, \emedio_h
\Big)=0$, we arrive at
\begin{eqnarray}\nonumber
   &&  |\emedio_h |^2 -|{\bf e}_h^m|^2 + |\emedio_h - {\bf e}_h^m|^2+
2\, k\, \| \emedio_h  \|^2 \\ \label{des-interm}
   &=&  2\, k\  \Big( p_s(t_{m+1}),\gradH \cdot  \emedio_h \Big)
+ 2\, k\, c\Big({\bf E}_h^m+{\bf E}_i^m, {\bf u}(t_{m+1}),
\emedio_h \Big)
 \\ \nonumber
   &+ &   2\,k\, c\Big({\bf U}_h^m,\ee _i, \emedio_h \Big)+ 2\, k\,
\Big(\mathcal{E}^{m+1}-\delta_t\ee_i,\emedio_h \Big ) -\Big (\nabla \ee_i, \nabla\emedio_h
\Big) :=\displaystyle \sum_{i=1}^5 I_1
\end{eqnarray}

For brevity, we only bound  the more difficult terms  of the RHS of (\ref{des-interm}) (for more details, see \cite{g-re-space}, where these type of bounds have been made for the Navier Stokes case). We bound term $I_1$, by using that $  \Big( J_h p_s (t_{m+1}),\gradH \cdot  {\bf e}_h^m  \Big)=0$, as
\begin{eqnarray*}
I_1 &=& 
2\, k\,\Big ( p_s (t_{m+1}), \gradH \cdot  (\emedio_h - {\bf e}_h^m)
\Big ) + 2\, k\, \Big(e_{p,i}^{m+1} , \gradH \cdot  {\bf e}_h^m
\Big)
\\
&=& - 2\, k\, \Big(\gradH p_s(t_{m+1}),
  \emedio_h - {\bf e}_h^m \Big) +2\,k\, \Big(e_{p,i}^{m+1} , \gradH \cdot
   {\bf e}_h ^m  \Big) \\
&\le & \varepsilon  | \emedio_h -{\bf e}_h^m |^2 + C\,k^2 \|
p_s(t_{m+1})\| ^2 + \varepsilon \, k \| {\bf e}_h^m \| ^2
+ C\, k\, h^{2l} \| p_s (t_{m+1}) \|_{H^l(S)} ^2
\end{eqnarray*}
The interpolation part of $I_4$ is 
$$
  2 \, k\, \Big(\delta_t\ee_i  , \emedio_h \Big) = 2\,  k\,
  \Big({\bf e}_i (\delta_t{\bf u}(t_{m+1})), \emedio_h \Big)
\le  \varepsilon \, k\, \|\emedio_h \|^2+ C\,  h^{2l}
\int_{t_m}^{t_{m+1}}
  \| {\bf u}_t \|_{H^l}^2 
$$
The discrete vertical part of $I_2$ is 
$$
  2\,k\, c\Big(e_{3,h}^m , {\bf u}(t_{m+1}), \emedio_h \Big)
  = 2\, k\, \Big(e_{3,h}^m \,\partial_z {\bf u}(t_{m+1}),
  \emedio_h \Big)+
k\,\Big(\partial_z e_{3,h}^m \, {\bf u}(t_{m+1}), \emedio_h
\Big):=L_1+L_2 .
$$
By using that ${\bf u} \in L^{\infty} ({\bf H}^{l+1} )$
and (\ref{est2-errorvertical}),
\begin{eqnarray*}
 L_1 &\le& 2\,k\, \| e_{3,h}^m \|_{L_z^{\infty}  L^2_{\bf x}}
 \|\partial_z {\bf u}(t_{m+1})\|_{L_z^{2} L^4_{\bf x}}
 \| \emedio_h \|_{L_z^{2}  L^4_{\bf x}}
 \\ &\le& C\,k\, (\| {\bf e}_h^m \| + \| {\bf e}_i^m \|)
  | \emedio_h |^{1/2} \| \emedio_h \| ^{1/2} \\
 & \le & \varepsilon \, k\, \| {\bf e}_h^m \| ^2 + C\,k\,h^{2l}
  +  \varepsilon \, k\, \| \emedio_h
  \|^2 +  C\,k \, | \emedio_h | ^2
\end{eqnarray*}
By using  that ${\bf u} \in L^{\infty} ({\bf
  H}^2 )$ and (\ref{est-errorvertical}),
\begin{eqnarray*}
L_2  & \le & k\,|\partial_z e_{3,h}^m | \, \|{\bf u}(t_{m+1})
\|_{L^{\infty}} \, | \emedio_h |  \le \varepsilon \,k\,  (\| {\bf
  e}_h^m \| ^2 + \| {\bf e}^m_i  \| ^2)
+ C\, k\, | \emedio_h |^2 \\
& \le & \varepsilon \,k\,\| {\bf e}_h^m \| ^2 +  C\, k\,  |
\emedio_h |^2 +  C\, k\,h^{2l} .
   \end{eqnarray*}
By a similar way, by using that ${\bf u} \in L^{\infty} ({\bf
  H}^2 )$, $u_3 \in L^{\infty} ({\bf  H}^{l+1} )$ and (\ref{est-interpo-vertical}), the vertical  interpolation part of $I_2$ is bounded as:
\begin{eqnarray*}
   2\,k\, c\Big( e_{3,i}^m , {\bf u}(t_{m+1}), \emedio_h \Big) &=& 2\,k \Big(
  e_{3,i}^m  \,\partial_z {\bf u}(t_{m+1}) , \emedio_h \Big)
  + k\,\Big(\partial_z  e_{3,i}^m \, {\bf u}(t_{m+1}) , \emedio_h \Big)
  \\ &\le&
C \,k\, \| e_{3,i} ^m \| _{H(\partial_z)} \, \|{\bf u} (t_{m+1})\|
_{H^2}
 \| \emedio_h \|
 \le \varepsilon \, k\, \| \emedio_h \|^2 + C\,k\,  h^{2l} .
\end{eqnarray*}
Now, we decompose $I_3$ as
  $$I_3 =
 2\,k\, c\Big({\bf U}_h^m ,\ee _i, \emedio_h \Big)
 =2\,k\, c\Big({\bf E}^m ,\ee _i, \emedio_h \Big)
 -2\,k\, c\Big({\bf U}(t_m) ,\ee _i, \emedio_h \Big),
$$
and their more difficult terms can be bounded,  using (\ref{est-errorvertical}),
 (\ref{est2-errorvertical}) and the inverse inequalities
  $ \| \emedio_h\|_{L_z^2 \, L_{\bf x}^{\infty}} \le C\,  h^{-1} |\emedio_h | $
   and  $\| \emedio_h\|_{L^3} \le C\,  h^{-1/2} |\emedio_h |$, as follows:
\begin{eqnarray*}
  2\,k\, c\Big( e_{3,h}^m + e_{3,i}^m , \ee_i , \emedio_h \Big) &=&
   2\,k \Big( (e_{3,h}^m +e_{3,i}^m)\,\partial_z \ee_i ,  \emedio_h \Big)
   \\&+&
   k \Big((\partial_z   e_{3,h}^m+ \partial_z e_{3,i}^m) \,\ee_i , \emedio_h \Big) \\
& \le &  2\,k \| e_{3,h}^m +e_{3,i}^m \|_{L_z^{\infty} L_{\bf x}^2}
\|\partial_z \ee_i \|_{L_{z}^2L_{\bf x}^2}  \|\emedio_h
\|_{L_{z}^2L_{\bf x} ^{\infty}}\\
&+&k \|\partial_z \,  e_{3,h}^m + \partial_z e_{3,i}^m \|_{L^2}
\|\ee_i \|_{L^6} \|\emedio_h \|_{L^3}
\\ & \le & C\, k\, (\| {\bf e}_h^m \|+ \| {\bf e}^m_i
\|+ h^l )\|\ee_i\|\,(h^{-1}+h^{-1/2}) | \emedio_h |
\\ & \le & C\, k\, (\| {\bf e}_h^m \|+ h^l )h^l\,h^{-1} | \emedio_h |
\\ & \le &
   \varepsilon \, k\, \| {\bf e}_h^m \|^2+ C\,k\,h^{2l}
+  C\,k \, | \emedio_h | ^2
\end{eqnarray*}
and
$$
2\,k\, c\Big(u_3(t_m) ,\ee _i, \emedio_h \Big)\le \varepsilon\, k
\, \| \emedio_h\|^2+ C\, k\, h^{2l} .
$$

Therefore, applying previous estimates to (\ref{des-interm}) and
making $| \emedio_h |^2 \le 2(|{\bf e}_h^m|^2  + | \emedio_h -{\bf
e}_h^m| ^2)$, we get
\begin{equation}\label{des-interm-b}
\begin{array}{l}
  |\emedio_h |^2 -|{\bf e}_h^m|^2 + |\emedio_h - {\bf e}_h^m|^2+
2\, k\, \| \emedio_h  \|^2
\\ \nonumber
   \le  C\,k ( |{\bf e}_h^m|^2  + | \emedio_h -{\bf e}_h^m| ^2) +2\, k\,
     \Big(\mathcal{E}^{m+1} , \emedio_h
\Big) + \varepsilon \, k \| \emedio_h \|^2
\\ +
 \varepsilon \, k\, \| {\bf e}_h^m \|^2
 +C\,k^2 + C\,k\,h^{2l} .
\end{array}
\end{equation}

 On the other hand, making $ 2\, k\Big( (E_2)_h^{m+1},{\bf
e}_h^{m+1} \Big)$, we arrive at
\begin{equation}\label{paso2}
   |{\bf e}^{m+1}_h |^2 - |{\bf e}^{m+1/2}_h|^2 + |{\bf e}^{m+1}_h - {\bf
     e}^{m+1/2}_h|^2+  k\,\Big\{ \|{\bf e}^{m+1}_h   \|^2
-\|  {\bf e}^{m+1/2}_h \|^2 + \|{\bf e}^{m+1}_h  -   {\bf
e}^{m+1/2}_h \|^2 \Big\} =0.
\end{equation}

Adding (\ref{des-interm-b}) and (\ref{paso2}) from $m=0$ to $r$
(with any $r<M$) and choosing $\varepsilon$ and $k$ small enough,
\begin{eqnarray*}
&&|{\bf e}^{r+1}_h |^2 + \sum_{m=0}^r \left(|{\bf e}^{m+1}_h - {\bf
     e}^{m+1/2}_h|^2 +\fra{1}{2} |{\bf e}^{m+1/2}_h - {\bf
     e}^{m}_h|^2 \right) \\
     &&\quad + k\sum_{m=0}^r \left(
      \fra{1}{2} \|{\bf e}^{m+1}_h\|^2 +
       \|{\bf e}^{m+1}_h - {\bf
     e}^{m+1/2}_h \|^2 +  \fra{1}{2} \|{\bf e}^{m+1/2}_h \|^2
      \right)
      \\ &&
       \le C\,k \sum_{m=0}^r |{\bf e}^{m}_h|^2 + \varepsilon \, k\, \| {\bf e}_h^0 \| ^2   + C(k+h^{2l})
\end{eqnarray*}
Therefore, applying discrete  Gronwall's Lemma, we can get
(\ref{dtr1}) and (\ref{dtr1bis}).
 \end{proof}

\subsection{$ O(k+h^l)$  error estimates for $\ee_h$ in $l
^{\infty}( {\bf L}^2) \cap l ^{2}( {\bf H}^1)$.}

\begin{theorem}\label{dt2}
Under  hypotheses of Theorem \ref{dt1}, ${\bf (R2)}$ and ${\bf
(H)}$, the following error estimate holds
\begin{equation}\label{dtr2}
 \| {\bf e}_h^{m+1}  \| _{ l ^{\infty}( {\bf L}^2) \cap
l ^{2}( {\bf H}^1)} \le C\, (k + h^l ).
\end{equation}
\end{theorem}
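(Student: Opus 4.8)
The plan is to test the combined error equation $(E)_h^{m+1}$ directly with the end-of-step error $\ee_h$. The second line of $(E)_h^{m+1}$ states $\big(\gradH\cdot\langle\ee_h\rangle,q_h\big)_S=0$ for every $q_h\in Q_h$, so $\ee_h\in{\bf V}_h$ and the pressure term $\big(e_{p,h}^{m+1},\gradH\cdot\langle\ee_h\rangle\big)_S$ vanishes. Applying $2(a-b,a)=|a|^2-|b|^2+|a-b|^2$ to the discrete time derivative and multiplying by $2k$, one gets the energy identity
\[
|\ee_h|^2-|{\bf e}_h^m|^2+|\ee_h-{\bf e}_h^m|^2+2k\,\|\ee_h\|^2 = -2\big(\ee_i-{\bf e}_i^m,\ee_h\big)+2k\,{\bf NL}^{m+1}(\ee_h)+2k\big(\mathcal{E}^{m+1},\ee_h\big).
\]
The aim is to bound every right-hand term by $\varepsilon\,k\|\ee_h\|^2+\varepsilon\,k\|{\bf e}_h^m\|^2+C\,k\,|{\bf e}_h^m|^2$ plus data of size $O(k^2+h^{2l})$, to sum from $m=0$ to $r$ (the $\|\cdot\|^2$ pieces being absorbed into the dissipation $2k\sum\|\ee_h\|^2$ after an index shift, using $|{\bf e}_h^0|\le Ch^l$), and to conclude with the discrete Gronwall Lemma~\ref{GronwallD}.

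The interpolation term is harmless: since $\ee_i-{\bf e}_i^m=(\mathrm{Id}-I_h)\big(\int_{t_m}^{t_{m+1}}{\bf u}_t\big)$, the $O(h^l)$ estimate of {\bf (H3)} and ${\bf u}_t\in L^2({\bf H}^l)$ from {\bf (R1)} give, via a weighted Young inequality $2ab\le\frac1k a^2+k\,b^2$, the bound $\frac1k|\ee_i-{\bf e}_i^m|^2+k|\ee_h|^2$, whose first part sums to $O(h^{2l})$ and whose second part is of Gronwall type. The decisive gain over Theorem~\ref{dt1} is the treatment of the consistency error: because now $\ee_h\in{\bf V}_h\subset{\bf V}$, the leading Taylor remainder $-\tfrac1k\int_{t_m}^{t_{m+1}}(t-t_m){\bf u}_{tt}$ may be paired through the ${\bf V}'$--${\bf V}$ duality, and the new regularity ${\bf u}_{tt}\in L^2({\bf V}')$ of {\bf (R2)} yields, using $2ab\le\varepsilon k\,a^2+\frac1{\varepsilon k}b^2$,
\[
2k\sum_{m}\big|\big\langle\mathcal{E}^{m+1}_{\mathrm{time}},\ee_h\big\rangle\big|\le \varepsilon\,k\sum_m\|\ee_h\|^2+C\,k^2\,\|{\bf u}_{tt}\|_{L^2({\bf V}')}^2,
\]
an $O(k^2)$ contribution in place of the $O(k)$ that arose in Theorem~\ref{dt1}, where the same remainder had to be paired in $L^2$ against the only $\sqrt k$-accurate intermediate velocity. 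The convective remainder $-\big(\int_{t_m}^{t_{m+1}}{\bf U}_t\cdot\gradV\big){\bf u}(t_{m+1})$ is estimated in the same spirit, transferring the derivative onto $\ee_h$ by skew-symmetry and using ${\bf U}_t\in L^2({\bf L}^2)$ from {\bf (R1)}, again of order $O(k^2)$.

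There remain the genuinely nonlinear terms ${\bf NL}^{m+1}(\ee_h)=-c({\bf E}_h^m+{\bf E}_i^m,{\bf u}(t_{m+1}),\ee_h)-c({\bf U}_h^m,\emedio_h+\ee_i,\ee_h)$. The first term has the derivative on the smooth factor ${\bf u}(t_{m+1})$, and using ${\bf u}\in L^\infty({\bf H}^{l+1})$, the anisotropic inequality (\ref{du_4}) for $e_{3,h}^m$ together with (\ref{est2-errorvertical}) and the $O(h^l)$ interpolation bounds, it is controlled exactly as in Theorem~\ref{dt1} by $\varepsilon k\|\ee_h\|^2+Ck\|{\bf e}_h^m\|^2+Ck|{\bf e}_h^m|^2+Ck\,h^{2l}$. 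For the critical piece $c({\bf U}_h^m,\emedio_h,\ee_h)$ one writes $\emedio_h=\ee_h-(\ee_h-\emedio_h)$; the diagonal term $c({\bf U}_h^m,\ee_h,\ee_h)$ vanishes by skew-symmetry, leaving
\[
-c\big({\bf U}_h^m,\ee_h-\emedio_h,\ee_h\big)=c\big({\bf U}_h^m,\ee_h,\ee_h-\emedio_h\big),
\]
in which $\ee_h-\emedio_h$ carries no derivative. Splitting into horizontal and vertical convection, measuring $\ee_h-\emedio_h$ in $L^3$ (resp.\ $L^2_zL^4_{\bf x}$) by the inverse inequalities of {\bf (H2)} at the price of a negative power of $h$, and inserting the difference estimate (\ref{dtr1bis}), $\|\ee_h-\emedio_h\|_{l^2({\bf L}^2)}\le C\sqrt k(\sqrt k+h^l)$, one checks that these negative powers of $h$ are compensated by the $\sqrt k$ of (\ref{dtr1bis}) together with the constraint {\bf (H)}, $k\le h^2$, leaving a contribution of order $O(k^2+h^{2l})$; the lower-order piece $c({\bf U}_h^m,\ee_i,\ee_h)$ is bounded as in Theorem~\ref{dt1}.

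Collecting all bounds, summing from $m=0$ to $r$, absorbing the $\|\cdot\|^2$ terms into the dissipation and choosing $\varepsilon$ small, one arrives at
\[
|{\bf e}_h^{r+1}|^2+k\sum_{m=0}^r\|\ee_h\|^2\le C\,k\sum_{m=0}^r|{\bf e}_h^m|^2+C\,(k^2+h^{2l}),
\]
and Lemma~\ref{GronwallD} yields (\ref{dtr2}). The main obstacle is precisely the term $c({\bf U}_h^m,\emedio_h,\ee_h)$: the intermediate velocity is only $\sqrt k$-accurate and the vertical velocity has no $L^\infty$ control, so the whole argument hinges on transferring its error to the difference $\ee_h-\emedio_h$, estimating that difference in $l^2({\bf L}^2)$ through (\ref{dtr1bis}), and balancing the resulting inverse-inequality factors by means of $k\le h^2$; it is here, and only here, that the hypothesis {\bf (H)} is genuinely used.
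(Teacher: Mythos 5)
Your skeleton coincides with the paper's: test $(E)_h^{m+1}$ with $2k\,\ee_h$, note that the pressure term drops, and isolate $c\big({\bf U}_h^m,\emedio_h,\ee_h\big)$ as the decisive term, rewriting it by skew-symmetry as a form acting on the difference $\emedio_h-\ee_h$, to be controlled through (\ref{dtr1bis}) and \textbf{(H)}. But your quantitative treatment of this term has a genuine gap. Keeping the full discrete field ${\bf U}_h^m$ inside the trilinear form forces you to pay inverse inequalities with nothing to compensate them: for the vertical part, the best available bound is $\big|c\big(u_{3,h}^m,\ee_h,\ee_h-\emedio_h\big)\big|\le C\,\|u_{3,h}^m\|_{L^\infty_zL^2_{\bf x}}\,\|\ee_h\|\,\|\ee_h-\emedio_h\|_{L^2_zL^\infty_{\bf x}}\le C\,\|\ee_h\|\,h^{-1}\,|\ee_h-\emedio_h|$, since $\|u_{3,h}^m\|_{L^\infty_zL^2_{\bf x}}\le C\|{\bf u}_h^m\|\le C$ is all one has. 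After Young's inequality and summation this leaves $C\,h^{-2}\,k\sum_m|\ee_h-\emedio_h|^2\le C\,h^{-2}\,k\,(k+h^{2l})\le C\,(k+h^{2l})$ by (\ref{dtr1bis}) and $k\le h^2$ --- an $O(k)$ source term, not the $O(k^2+h^{2l})$ you assert. (Even the horizontal part, with only an $h^{-1/2}$ loss, leaves $C\,h^{-1}k(k+h^{2l})\le C(hk+h^{2l+1})$, and $hk\gg k^2$; for $l=2$ with $k\sim h^2$ this is $h^3\gg h^4$.) Feeding $O(k)$ into Gronwall merely reproduces the rate $O(\sqrt{k}+h^l)$ of Theorem~\ref{dt1}, so your argument does not prove the theorem.

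The missing idea is the further decomposition ${\bf U}_h^m={\bf U}(t_m)-{\bf E}^m$ that the paper performs inside this term. For the exact-solution part, $\|u_3(t_m)\|_{L^\infty}$ and $\|\partial_z u_3(t_m)\|_{L^3}$ are bounded, so no inverse inequality is needed and one gets $\varepsilon\,k\,\|\ee_h\|^2+C\,k\,|\emedio_h-\ee_h|^2$, whose sum is $C\,k(k+h^{2l})\le C(k^2+h^{2l+2})$. For the error part, the inverse-inequality losses are paired with the extra smallness of $|{\bf e}_h^m|$ (via $\|{\bf e}_h^m\|\le Ch^{-1}|{\bf e}_h^m|$), producing $C\,k\,h^{-4}\,|\emedio_h-\ee_h|^2\,|{\bf e}_h^m|^2$, which is treated not as a source but as a Gronwall coefficient $\gamma_m=C\,h^{-4}|\emedio_h-\ee_h|^2$ with $k\sum_m\gamma_m\le C\,h^{-2}(k+h^{2l})\le C$; this superquadratic bookkeeping is precisely where (\ref{dtr1bis}) and \textbf{(H)} enter, and without it the balance sheet cannot close. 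A further, smaller inaccuracy: your pairing of the Taylor remainder with $\ee_h$ through the ${\bf V}'$--${\bf V}$ duality presumes ${\bf V}_h\subset{\bf V}$, which is false for the FE pairs considered (elements of ${\bf V}_h$ are only discretely divergence-free, since $Q_h\subsetneq L^2_0(S)$); the paper is admittedly terse on the same point, but your inclusion as stated does not hold.
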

Note that, from (\ref{est-errorvertical}) and (\ref{dtr2}), we
also have
$$ \| e_{3,h}^{m+1} \|_{l^2 (H(\partial_z))} \le C\, (k+h^l).$$

\begin{proof}
 The main idea is  to make $2\,
k\sum_{m=0}^{M-1} \Big((E)_h^{m+1}, \ee_h \Big).$

In fact, making $2\, k\,  \Big((E)_h^{m+1},  \ee_h \Big)$, the
pressure term vanish, and we arrive at
\begin{eqnarray*}
   &&  |\ee_h |^2 -|{\bf e}_h^m|^2 + |\ee_h - {\bf e}_h^m|^2+
2\, k\, \| \ee_h  \|^2 \\
   &=&
 -2 \,k\Big(\delta_t\ee_i , \ee_h \Big)
 -2\, k\, c\Big({\bf E}_h^m, {\bf u}(t_{m+1}), \ee_h \Big)
-2\,k\, c\Big({\bf E}_i^m , {\bf u}(t_{m+1}) ,   \ee_h \Big)\\
   &-&2\,k\,c\Big( {\bf
U}_h^m , \emedio_h,\ee_h \Big) - 2\,k\, c\Big({\bf U}_h^m ,
\ee_i, \ee_h \Big)+ 2\, k  \Big(\mathcal{E}^{m+1} , \ee_h
\Big):=\sum_{i=1}^6 I_i\end{eqnarray*}

We bound the $I_i$ terms of similar way as in Theorem \ref{dt1}:
$$
  I_1=- 2\,k \Big({\bf e}_i
  (\delta_t  {\bf u}(t_{m+1})), \ee_h \Big)
\le  \varepsilon \, k\, \|\ee_h \|^2+ C\, h^{2l}
\int_{t_m}^{t_{m+1}} \| {\bf  u}_t \|_{H^{l}} ^2 .
$$
The vertical part of $I_5$ is bounded as
\begin{eqnarray*}
  2\,k\, c\Big(e_{3,h}^m , \ee _i, \ee_h \Big) &\le&
   C\, \|e_{3,h}^m\|_{L^\infty_zL^2_{\bf x}}\| \ee _i\|_{H^1}\|
\ee_h\|_{L^2_zL^\infty_{\bf x}} \le C\, \|{\bf
e}_{h}^m\|\,h\,h^{-1}|\ee_h| \\
   &\le & \varepsilon \, k\, \|{\bf e}_{h}^m \|^2+ C\,k(|\ee_h-{\bf
e}_{h}^m|^2+|{\bf e}_{h}^m|^2).
\end{eqnarray*}
The term $I_4=c\Big( {\bf U}_h^m ,
\emedio_h,\ee_h \Big)\not=0$, but  using that $ c\Big( {\bf U}_h^m
, \ee_h,\ee_h \Big)=0$, this term $I_4$ is decomposed as
$$I_4=- 2\,k\,
c\Big( {\bf E}^m , \emedio_h - \ee_h,\ee_h \Big)+ 2\,k\, c\Big(
{\bf U}(t_m) , \emedio_h - \ee_h,\ee_h \Big):=J_1+J_2.$$ The  more
complicate terms of $J_1$ are the vertical parts:
$$
2\,k \Big( (e_{3,h}^m + e_{3,i}^m)\, (\emedio_h -\ee_h) , \partial_z \ee_h \Big) \\
 +2\,  k\Big (\partial_z (  e_{3,h}^m +  e_{3,i}^m)\, (\emedio_h -\ee_h) , \ee_h \Big):=
J_{1,1} + J_{1,2} .
$$
Since $J_{1,2}$ is easier  to bound  than $J_{1,1}$, we only bound
$J_{1,1}$:
\begin{eqnarray*}
J_{1,1} &\le& 2\, k \,\| e_{3,h}^m + e_{3,i}^m\|_{L_z^{\infty}
L_{\bf x}^2} \| \emedio_h -\ee_h \|_{L_z^2 L_{\bf x}^\infty}
\|\partial_z \ee_h \|_{L^2_zL^2_{\bf x}}
\\ &\le& C\,k\,(\| {\bf e}_{h}^m\|+\| {\bf e}_{i}^m
\| + h^l \,\|u_3(t_m) \|_{H^{l+1}} ) \, h^{-1} |\emedio_h -\ee_h | \, \| \ee_h \|\\
 &\le& C\, k \, (h^{-2} \, | {\bf e}_h^m |
+1 )\, | \emedio_h -\ee_h | \, \| \ee_h \|
\\ & \le& \varepsilon \ k \,  \| \ee _h \| ^2 + C\, k\, h^{-4}
|\emedio_h -\ee_h |^2 \, |{\bf e}_h ^m | ^2 + C\, k \, |\emedio_h
-\ee_h |^2 .
\end{eqnarray*}
 Here,
we have used (\ref{est2-errorvertical}) and
(\ref{est-interpo-vertical-bis}) and the inverse inequalities
$$ \|\emedio_h -\ee_h \|_{L^2_z \, L^\infty_{\bf x}} \le
C \, h^{-1} |\emedio_h -\ee_h |\quad \hbox{and} \quad \|{\bf e}_h
^m \| \le C \, h^{-1} |{\bf e}_h ^m |.$$ Moreover, $J_2$ can be
bounded as
$$
J_2\le \varepsilon \ k \,  \| \ee _h \| ^2 + C\, k\,|\emedio_h
-\ee_h |^2 .
$$

Finally, adding from $m=0$ to $r$ (with any $r<M$) and taking into
account the bound
$$ C\,k\, h^{-4} \, \sum_m  |\emedio_h -\ee_h | ^2 \le  C\, h^{-2}
(k+h^{2l} ) \le C, $$ (where estimate (\ref{dtr1bis}) of Theorem
\ref{dt1} and ${\bf (H)}$ have been used)
 we can apply the discrete  Gronwall's Lemma,
 obtaining the desired estimates.
 \end{proof}

\subsection{$O( \sqrt{k}+h^l)$ error estimates for $\deltaee_h$  in $l ^{2}(
  {\bf L}^2) $  and for $ \ee_h $ in $l ^{\infty}({\bf H}^1) $ }

 We will use the following notations
 for the discrete derivative of errors
$$\delta _t  {\bf e}_h^{m+1}:=\displaystyle \frac{{\bf e}_h^{m+1}-{\bf
    e}_h^{m}}{k}, \qquad
 \delta _t {\bf e}_h^{m+1/2}:=\displaystyle \frac{{\bf e}_h^{m+1/2}-{\bf
     e}_h^{m-1/2}}{k}.$$

\begin{theorem}\label{eqdt1}
Assume  hypotheses of Theorem \ref{dt1}, $({\bf R3})$, $({\bf H})$  and $\|  {\bf e}_h ^0 \|\le
C\, h^l$. 
Then, the following error estimate holds
$$
 \| {\bf e}_h^{m+1}  \| _{ l ^{\infty}( {\bf H}^1)} +
\| \delta_t \ee_h\| _{l^2({\bf L}^2)} \le C\, (\sqrt{k} + h^l ).
$$
\end{theorem}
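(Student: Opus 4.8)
The plan is to test the combined error equation $(E)_h^{m+1}$ with the discrete time derivative $\delta_t{\bf e}_h^{m+1}=({\bf e}_h^{m+1}-{\bf e}_h^m)/k$. Because ${\bf e}_h^{m+1}$ and ${\bf e}_h^m$ both satisfy the discrete constraint, $\delta_t{\bf e}_h^{m+1}\in{\bf V}_h$ and the pressure term $\Big(e_{p,h}^{m+1},\gradH\cdot\langle\delta_t{\bf e}_h^{m+1}\rangle\Big)_S$ vanishes. The time-derivative term then produces $|\delta_t{\bf e}_h^{m+1}|^2$ and, by the polarization identity, the diffusion term produces $\frac{1}{2k}\Big(\|{\bf e}_h^{m+1}\|^2-\|{\bf e}_h^m\|^2+\|{\bf e}_h^{m+1}-{\bf e}_h^m\|^2\Big)$. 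Multiplying by $2k$, summing from $m=0$ to $r$ and telescoping, the left-hand side becomes
$$
2k\sum_{m=0}^{r}|\delta_t{\bf e}_h^{m+1}|^2+\|{\bf e}_h^{r+1}\|^2+\sum_{m=0}^{r}\|{\bf e}_h^{m+1}-{\bf e}_h^m\|^2 ,
$$
which already controls the two target norms $\|\delta_t{\bf e}_h\|_{l^2({\bf L}^2)}$ and $\|{\bf e}_h^{m+1}\|_{l^{\infty}({\bf H}^1)}$, up to the initial term $\|{\bf e}_h^0\|^2\le C\,h^{2l}$.

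It then remains to bound $2k\sum_m$ of the right-hand side of $(E)_h^{m+1}$: the interpolation term $-\frac1k\Big(\ee_i-{\bf e}_i^m,\delta_t{\bf e}_h^{m+1}\Big)$, the consistency term $\Big(\mathcal{E}^{m+1},\delta_t{\bf e}_h^{m+1}\Big)$ and the convective term ${\bf NL}^{m+1}(\delta_t{\bf e}_h^{m+1})$. The first two are routine. Writing $-\frac1k(\ee_i-{\bf e}_i^m)=-{\bf e}_i(\delta_t{\bf u}(t_{m+1}))$ and using the interpolation estimate $|{\bf e}_i(\cdot)|\le C\,h^l\|\cdot\|_{{\bf H}^l}$ with ${\bf u}_t\in L^2({\bf H}^l)$ from {\bf (R1)}, this term is absorbed into the left-hand side up to a remainder of order $h^{2l}$; the consistency term uses $\mathcal{E}^{m+1}\in{\bf L}^2$ (from ${\bf u}_{tt}\in L^2({\bf L}^2)$ and ${\bf U}_t\in L^2({\bf H}^1)$ in {\bf (R3)}) together with $k\sum_m|\mathcal{E}^{m+1}|^2\le C\,k^2$, giving a remainder of order $k^2$.

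The heart of the matter is the convective term
$${\bf NL}^{m+1}(\delta_t{\bf e}_h^{m+1})=-c\Big({\bf E}_h^m+{\bf E}_i^m,{\bf u}(t_{m+1}),\delta_t{\bf e}_h^{m+1}\Big)-c\Big({\bf U}_h^m,\emedio_h+\ee_i,\delta_t{\bf e}_h^{m+1}\Big),$$
the obstacle being that the discrete time derivative occurs only in the test slot and is controlled in ${\bf L}^2$ but never in ${\bf H}^1$. I would split ${\bf U}_h^m={\bf U}(t_m)-{\bf E}^m$: the regular-solution contributions $c({\bf U}(t_m),\cdot,\delta_t{\bf e}_h^{m+1})$ are disposed of with $\nabla\cdot{\bf U}(t_m)=0$, the embedding ${\bf U}(t_m)\in{\bf H}^{l+1}$ and $\varepsilon$-Young, leaving factors such as $\|\emedio_h\|^2$ whose weighted sum $k\sum_m\|\emedio_h\|^2$ is $O(k+h^{2l})$ by Theorem \ref{dt1}. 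The delicate contributions are those carrying the error field, and above all the vertical convection; these are estimated by the vertical inequality (\ref{du_4}) applied through (\ref{est2-errorvertical})--(\ref{est-interpo-vertical-bis}), the horizontal Gagliardo--Nirenberg inequality, and the inverse inequalities of {\bf (H2)} (for example $\|\delta_t{\bf e}_h^{m+1}\|_{L^2_zL^4_{\bf x}}\le C\,h^{-1/2}|\delta_t{\bf e}_h^{m+1}|$) to compensate the missing ${\bf H}^1$-control of the test function. The ensuing negative powers of $h$ are absorbed using the ${\bf L}^2$-error bound $|{\bf e}_h^m|\le C(k+h^l)$ of Theorem \ref{dt2} (with the inverse inequality $\|{\bf e}_h^m\|\le C\,h^{-1}|{\bf e}_h^m|$) and the difference bound (\ref{dtr1bis}); the constraint {\bf (H)} $k\le h^2$ is exactly what keeps them finite, as in the estimate $C\,k\,h^{-4}\sum_m|\emedio_h-\ee_h|^2\le C$ already exploited in Theorem \ref{dt2}.

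Collecting all the bounds, choosing $\varepsilon$ small enough to absorb the terms $\varepsilon|\delta_t{\bf e}_h^{m+1}|^2$ into the left-hand side, and using the already established $l^2({\bf H}^1)$-estimate of Theorem \ref{dt1} and the $l^\infty({\bf L}^2)$-estimate of Theorem \ref{dt2} to dominate the convective remainders, one reaches
$$
\|{\bf e}_h^{m+1}\|_{l^{\infty}({\bf H}^1)}^2+\|\delta_t{\bf e}_h\|_{l^2({\bf L}^2)}^2\le C(k+h^{2l}) ;
$$
any residual feedback of the form $C\,k\sum_m\|{\bf e}_h^m\|^2$ is handled by the discrete Gronwall inequality (Lemma \ref{GronwallD}). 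Taking square roots gives the claimed $O(\sqrt k+h^l)$ bound. I expect the vertical convective terms of the previous paragraph to be the genuine bottleneck, precisely because $\delta_t{\bf e}_h^{m+1}$ carries no gradient control, so that the balance between the inverse inequalities of {\bf (H2)} and the constraint {\bf (H)} must be tracked term by term.
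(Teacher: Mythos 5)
Your proposal reproduces the paper's proof essentially step for step: test $(E)_h^{m+1}$ with $2k\,\delta_t{\bf e}_h^{m+1}$ (the pressure term vanishing on the discretely divergence-free errors), telescope the resulting ${\bf H}^1$-norms and retain $2k\sum_m|\delta_t{\bf e}_h^{m+1}|^2$ on the left, split ${\bf U}_h^m={\bf U}(t_m)-{\bf E}^m$ in the convective term, control the vertical parts via (\ref{est2-errorvertical})--(\ref{est-interpo-vertical-bis}) and the inverse inequalities of {\bf (H2)}, and close with the discrete Gronwall lemma using $k\,h^{-2}\sum_m\|{\bf e}_h^{m+1/2}\|^2\le C\,h^{-2}(k+h^{2l})\le C$ from Theorem \ref{dt1} and {\bf (H)}, exactly as the paper does. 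The one blemish is your appeal to Theorem \ref{dt2}'s $O(k+h^l)$ bound for $|{\bf e}_h^m|$, which is strictly unavailable here (hypothesis {\bf (R2)} is not assumed in Theorem \ref{eqdt1}) but also unnecessary, since the paper absorbs the negative powers of $h$ with the Theorem \ref{dt1} estimates alone.
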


\begin{proof}
Making  $2\, k \Big((E)_h^{m+1}, \delta_t \ee_h \Big)$ the
pressure term vanish, and we arrive at
\begin{equation}\label{eh-deltateh}
\begin{array}{lcl}
   & \|\ee_h \|^2 -\|{\bf e}_h^m\|^2 + \|\ee_h - {\bf e}_h^m \|^2+
2\, k\, | \delta_t \ee_h  |^2 = 
- 2 \,k\Big(\delta_t\ee_i, \delta_t \ee_h \Big)&\\
&
  -2\, k\, c\Big({\bf E}_h^m, {\bf u}(t_{m+1}), \delta_t \ee_h \Big)
-2\,k\, c\Big({\bf E}_i^m , {\bf u}(t_{m+1}) , \delta_t   \ee_h \Big)&\\
& -2\,k\,c\Big( {\bf
U}_h^m , \emedio_h,\delta_t \ee_h \Big)
    - 2\,k\, c\Big({\bf U}_h^m
, \emedio _i, \delta_t \ee_h \Big)+ 2\, k  \Big(\mathcal{E}^{m+1}
, \delta_t \ee_h \Big)&\\
&    :=\sum_{i=1}^6 I_i &
\end{array}
\end{equation}
We must bound  the $I_i$ terms:
$$
  I_1
  = -2 \, k\Big({\bf e}_i (\delta_t
  {\bf u}(t_{m+1})), \delta_t \ee_h \Big)
\le  \varepsilon \, k\, |\delta_t\ee_h |^2+ C\, h^{2l}
\int_{t_m}^{t_{m+1}} \| {\bf  u}_t \|_{H^l} ^2 .
$$
Now, the term $I_4=-2 \, k\,c\Big( {\bf U}_h^m ,
\emedio_h,\delta_t \ee_h \Big)$ does not vanish,
$$I_4=2 \, k\,
c\Big( {\bf E}^m , \emedio_h ,\delta_t \ee_h \Big)-2 \, k\,c\Big(
{\bf U}(t_m) , \emedio_h ,\delta_t \ee_h \Big):=J_1+J_2.$$ The
more complicate terms to bound are the vertical parts of $J_1$:
$$
 2\,k \Big( (e_{3,h}^m + e_{3,i}^m)\, \partial_z \emedio_h,
 \delta_t  \ee_h \Big) \\
 + k\Big (\partial_z (  e_{3,h}^m +  e_{3,i}^m) \,\emedio_h
 ,\delta_t \ee_h \Big):=
J_{1,1} + J_{1,2} .
$$
By using (\ref{est2-errorvertical}) and
(\ref{est-interpo-vertical-bis}) and the inverse inequality $
\|\partial_z \emedio_h  \|_{L^2_z \, L^\infty_{\bf x}} \le C \,
h^{-1} \|\emedio_h \|$,
\begin{eqnarray*}
J_{1,1} &\le& 2\, k \,\| e_{3,h}^m + e_{3,i}^m\|_{L_z^{\infty}
L_{\bf x}^2} \| \partial_z \emedio_h \|_{L_z^2 L_{\bf x}^\infty}
\|\delta_t \ee_h \|_{L^2_zL^2_{\bf x}}
\\ &\le& C\,k\,(\| {\bf e}_{h}^m\|+  h^l \|{\bf u}(t_m)\|_{H^{l+1}}
 + h^l \|u_3(t_m)\|_{H^{l+1}} ) \, h^{-1} \|\emedio_h  \| \, |
\delta_t \ee_h |
\\ & \le& \varepsilon \ k \,  |\delta_t  \ee _h | ^2 + C\, k\, h^{-2}
\|\emedio_h \|^2 \|{\bf e}_h ^m \| ^2 + C\, k \, \|\emedio_h \|^2 .
\end{eqnarray*}
 By using the inverse inequality $ \| \emedio_h \|_{
L^\infty} \le C \, h^{-1/2} \|\emedio_h \|$,
\begin{eqnarray*}
J_{1,2} &\le& k\, \|\partial_z (  e_{3,h}^m +  e_{3,i}^m)\|_{L^2}
\|\emedio_h\|_{L^{\infty}}
 \|\delta_t \ee_h\|_{L^2} \le k \,(\| {\bf e}_h^m \|  + h^l ) \, h^{-1/2}
\|\emedio_h \| \, | \delta_t \ee_h |
\\ & \le& \varepsilon \ k \,  |\delta_t  \ee _h | ^2 + C\, k\, h^{-1}
\|\emedio_h \|^2 \|{\bf e}_h ^m \| ^2  + C\, k \,  \|\emedio_h
\|^2 .
\end{eqnarray*}
The $J_2$-term is bounded as
$$
J_2\le C \, k\,\|\emedio_h \|\,|\delta_t \ee_h | \le \varepsilon \
k \,  |\delta_t  \ee _h | ^2  + C\, k \,  \|\emedio_h \|^2 .
$$
 On the other hand, the vertical part of $I_2$ and $I_3$ are bounded
 as
$$
I_2 \le C\, k \Big( \| {\bf e}_h ^m\| +h^{l} \Big) \| {\bf
u}(t_{m+1}) \|_{H^3} |\delta_t \ee_h | \le \varepsilon \, k \, |
\delta_t \ee_h |^2 + C\, k \| {\bf e}_h ^m \| ^2 + C\, k \,h^{2l} ,
$$
$$
I_3\le \| e_{3,i}^m \|_{H(\partial_z)} \,  \| {\bf u}(t_{m+1})
\|_{H^3} \,  |\delta_t \ee_h | \le \varepsilon \, k \, | \delta_t
\ee_h |^2  + C\, k \, h^{2l} .
$$
We write $I_5$ as $$ I_5=2\,k\, c\Big(  {\bf E}^m , \ee_i
, \delta_t \ee_h \Big)-2\,k\, c\Big(  {\bf U}(t_m) , \ee_i  ,
\delta_t \ee_h \Big)
$$
 and its vertical part as
$$
2\,k \Big( (e_{3,h}^m + e_{3,i}^m) \partial_z \ee_i
 , \delta_t  \ee_h \Big) -
  k\Big (\partial_z (  e_{3,h}^m +  e_{3,i}^m) \ee_i
  ,\delta_t \ee_h \Big):=
K_1 + K_2
$$
We bound both terms as
\begin{eqnarray*}
K_1 &\le& 2\, k \,\| e_{3,h}^m + e_{3,i}^m\|_{L_z^{\infty} L_{\bf
x}^2} \| \partial_z \ee_i \|_{L_z^2 L^2_{\bf x}} \|\delta_t
\ee_h \|_{L^2_zL^{\infty}_{\bf x}}
\\ &\le& C\,k\,(\| {\bf e}_{h}^m\|+\| {\bf e}_{i}^m
\| +h^l ) \, h^l\|{\bf u}(t_{m+1})\|_{H^{l+1}} \, h^{-1} \, | \delta_t \ee_h |\\
&\le & \varepsilon \,k\, | \delta_t \ee_h |^2 + C\, k  \, \| {\bf
e}_h^m\| ^2 + C\, k \, h^{2l} ,
\end{eqnarray*}
\begin{eqnarray*}
K_2 &\le&  k \,\| \partial_z(e_{3,h}^m + e_{3,i}^m)\|_{L^2} \, \|
\emedio_i \|_{L^6}\,  \|\delta_t \ee_h \|_{L^3}
\\ &\le& C\,k\,(\| {\bf e}_{h}^m\|+\| {\bf e}_{i}^m
\| +h^l ) \, h^l\|{\bf u}(t_{m+1})\|_{H^{l+1}} \, h^{-1/2} \, | \delta_t \ee_h |\\
&\le & \varepsilon \,k\, | \delta_t \ee_h |^2 + C\, k \, h \, \|
{\bf e}_h^m\| ^2 + C\, k \, h^{2l} .
\end{eqnarray*}

Finally, taking into account the above estimates and adding (\ref{eh-deltateh}) from
$m=0$ to $r$ (with any $r<M$), since
 from estimates of
Theorem \ref{dt1} and ${\bf (H)}$,
$$ k\, h^{-2} \, \sum_m  \|\emedio_h \| ^2 \le  C \,  h^{-2}
(k+h^{2l} ) \le C  ,\ \ (l\ge 1), $$
 we can
apply the discrete  Gronwall's Lemma
 obtaining the desired estimates.
\end{proof}
\subsection{$O( \sqrt{k}+h^l)$ error estimates for $ e_{p,h}^{m+1}$  in $l ^{2}( {\bf L}^2) $ }

\begin{corollary}\label{epl2l2}
Assuming hypotheses of Theorem \ref{eqdt1}, one has
$$\| e_{p,h}^{m+1} \| _{l^2(L^2)} \le C\,(\sqrt{k}+ h^l ) .$$
\end{corollary}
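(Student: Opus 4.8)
The plan is to derive the pointwise-in-$m$ pressure bound from the error equation $(E)_h^{m+1}$ by invoking the hydrostatic Inf-Sup condition {\bf (H1)}, and then to take the $l^2$-in-time norm using the velocity estimates already established in Theorems \ref{dt1} and \ref{eqdt1} (so that no Gronwall argument is needed here). First I would isolate the pressure term in $(E)_h^{m+1}$: for every ${\bf v}_h\in{\bf X}_h$,
$$\Big(e_{p,h}^{m+1},\gradH\cdot\langle{\bf v}_h\rangle\Big)_S = \Big(\delta_t\ee_h,{\bf v}_h\Big)+\Big(\nabla\ee_h,\nabla{\bf v}_h\Big)+\Big(\delta_t\ee_i,{\bf v}_h\Big)-{\bf NL}^{m+1}({\bf v}_h)-\Big(\mathcal{E}^{m+1},{\bf v}_h\Big).$$
Dividing by $|\gradH{\bf v}_h|+\|\partial_z{\bf v}_h\|_{L^3}$, taking the supremum over ${\bf v}_h\in{\bf X}_h\setminus\{0\}$ and applying {\bf (H1)} with $q_h=e_{p,h}^{m+1}$ gives
$$\beta\,\|e_{p,h}^{m+1}\|_{L^2(S)}\le \sup_{{\bf v}_h}\frac{\Big(e_{p,h}^{m+1},\gradH\cdot\langle{\bf v}_h\rangle\Big)_S}{|\gradH{\bf v}_h|+\|\partial_z{\bf v}_h\|_{L^3}}.$$

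Then I would bound each term of the right-hand side by the Inf-Sup denominator times a coefficient that is square-summable at the right order. The three linear terms are routine: since $|{\bf v}_h|\le C|\gradH{\bf v}_h|$ by the vertical Poincar\'e inequality (\ref{dpoincare}) and $|\nabla{\bf v}_h|\le C(|\gradH{\bf v}_h|+\|\partial_z{\bf v}_h\|_{L^3})$ on the bounded domain (H\"older in the vertical variable), the first term yields $C|\delta_t\ee_h|$, the diffusion term yields $C\|\ee_h\|$, and the interpolation term yields $C|\delta_t\ee_i|$, where $\delta_t\ee_i={\bf e}_i(\delta_t{\bf u}(t_{m+1}))$ is controlled by $C\,h^l\|\delta_t{\bf u}(t_{m+1})\|_{H^l}$ through {\bf (H3)}. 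The consistency term $\mathcal{E}^{m+1}$ splits into a time-truncation part and a convective part; the time-truncation part is $O(k)$ in $l^2(L^2)$ after a Cauchy--Schwarz estimate using ${\bf u}_{tt}\in L^2({\bf L}^2)$ from {\bf (R3)}.

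The delicate part is the nonlinear term ${\bf NL}^{m+1}({\bf v}_h)=-c({\bf E}_h^m+{\bf E}_i^m,{\bf u}(t_{m+1}),{\bf v}_h)-c({\bf U}_h^m,\emedio_h+\ee_i,{\bf v}_h)$, and within it the vertical-convection contributions. Here I would use the skew-symmetric form of $c$ in which $\partial_z$ falls on the test function ${\bf v}_h$, so that the factor $\|\partial_z{\bf v}_h\|_{L^2_zL^3_{\bf x}}\le C\|\partial_z{\bf v}_h\|_{L^3}$ of the Inf-Sup denominator absorbs it, and estimate the remaining factors by anisotropic H\"older inequalities: the vertical velocity errors $e_{3,h}^m,e_{3,i}^m$ in the $L^\infty_zL^2_{\bf x}$ norm (controlled through (\ref{est2-errorvertical}) and (\ref{est-interpo-vertical-bis}) by $\|{\bf e}_h^m\|+h^l$), together with the horizontal regularity of ${\bf u}(t_{m+1})$ and the velocity bounds from Theorem \ref{dt1}. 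Where discrete factors force powers of $h^{-1}$, the inverse inequalities {\bf (H2)} and the constraint {\bf (H)} $k\le h^2$ (available under the hypotheses of Theorem \ref{eqdt1}) keep the coefficients uniformly bounded.

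Finally I would square, multiply by $k$ and sum over $m$. The dominant contributions $k\sum_m|\delta_t\ee_h|^2=\|\delta_t\ee_h\|_{l^2(L^2)}^2$ and $k\sum_m\|\ee_h\|^2=\|\ee_h\|_{l^2(H^1)}^2$ are both $O(k+h^{2l})$ by Theorem \ref{eqdt1}, while the interpolation, consistency and nonlinear contributions are of the same or higher order thanks to {\bf (H3)} and the regularity {\bf (R3)}. Taking square roots yields $\|e_{p,h}^{m+1}\|_{l^2(L^2)}\le C(\sqrt{k}+h^l)$. I expect the main obstacle to be the vertical-convection terms: matching them to the anisotropic $\|\partial_z{\bf v}_h\|_{L^3}$ norm of the hydrostatic Inf-Sup denominator is precisely what {\bf (H1)} is designed for, but it requires care to route each factor through the correct anisotropic space and to check that the resulting coefficients remain square-summable at order $\sqrt{k}+h^l$.
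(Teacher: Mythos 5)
Your proposal is correct and follows exactly the route the paper intends: its proof of Corollary \ref{epl2l2} is deliberately left as ``rather standard, starting from estimates of previous Theorems and applying the hydrostatic \emph{Inf-Sup} condition {\bf (H1)}'', and you have filled in precisely that argument -- isolating the pressure in $(E)_h^{m+1}$, invoking {\bf (H1)}, routing the vertical-convection terms through the skew-symmetric form so that $\|\partial_z{\bf v}_h\|_{L^3}$ absorbs the anisotropic factor, and summing with the bounds of Theorems \ref{dt1} and \ref{eqdt1} (with {\bf (H)} neutralizing any inverse-inequality powers of $h^{-1}$). One trivial slip: the vertical Poincar\'e inequality (\ref{dpoincare}) gives $|{\bf v}_h|\le C\,|\partial_z{\bf v}_h|\le C\,\|\partial_z{\bf v}_h\|_{L^3}$ rather than $|{\bf v}_h|\le C\,|\gradH{\bf v}_h|$, which is harmless since both quantities appear in the Inf-Sup denominator.
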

The proof is rather standard, starting from estimates of previous
Theorems and applying the hydrostatic \emph{Inf-Sup} condition
{\bf (H1)}.

It is
important to remark that, up this moment,  the order obtained 
is $O(\sqrt{k}+ h^l)=O(h+h^l)$ under the constraint $k\le h^2$, then this order is optimal for
$O(h)$ approximation (i.e.~$l=1$). In the next Section, we study an
argument to arrive at optimal order $O(k+ h^l)$ for the case $l=2$.

\subsection{An alternative way for $O(h^2)$ accuracy ($l= 2$)}

\subsubsection{$O( \sqrt{k}+h^2)$ error estimates for $\deltaee_h$
and $\deltaemedio_h $ in $l ^{\infty}( {\bf L}^2) \cap l^{2}( {\bf
H}^1)$.}

Making  $\delta_t (E_1)^{m+1}_h$ and $\delta_t (E_2)^{m+1}_h $ for
each $m\ge 1 $, one obtains  $ \forall\, {\bf v}_h\in {\bf X}_h$:
$$
 \left\{  \begin{array}{l}
\dis\frac{1}{k}  \Big(\deltaemedio_h -\delta_t {\bf e}_h^{m}, {\bf
v}_h \Big) +  \Big  (\nabla   \deltaemedio_h ,\nabla\, {\bf v}_h
\Big ) - \Big( \delta_t p_s(t_{m+1}), \gradH \cdot \langle{\bf
v}_h\rangle \Big)_S \\\displaystyle
  = \Big(\delta_t \mathcal{E}^{m+1}, {\bf
v}_h \Big) +\delta_t {\bf NL}_h^{m+1} ( {\bf v}_h ) -\frac{1}{k}
\Big((\delta_t \ee_i - \delta_t {\bf e}_i^m), {\bf v}_h \Big) -
\Big(\nabla \, \delta_t \ee_i , \nabla \, {\bf v}_h \Big)
\end{array} \right.
\leqno{(D_1)^{m+1}_h}
$$
where
\begin{eqnarray*}
   \delta_t {\bf NL}_h^{m+1} ({\bf v}_h) &=&
   -c\Big( \delta_t {\bf E}^m  ,  {\bf u}(t_{m+1}), {\bf v}_h  \Big)
    - c\Big(\delta _t {\bf U}_h^m ,\emedio , {\bf v}_h  \Big) \\
   && - c\Big({\bf E}^{m-1} , \delta_t {\bf u}(t_{m+1}) , {\bf v}_h \Big)
    - c \Big( {\bf U}_h^{m-1} , \delta_t\emedio  , {\bf v}_h \Big )
\end{eqnarray*}
and, for all $({\bf v}_h,q_h) \in {\bf X}_h\times Q_h$,
$$
\left \{ \begin{array}{l} \displaystyle \frac{1}{k} \Big( \delta_t
{\bf e}_h^{m+1}-\delta_t \emedio_h,  {\bf v}_h  \Big)
  +   \Big( \nabla ( \deltaee_h -\deltaemedio_h), \nabla
 {\bf v}_h  \Big) = -   \Big(  \delta_t  p_{s,h}^{m+1},
\gradH\cdot \langle {\bf v}_h \rangle \Big)_S
\\
\Big( \gradH\cdot \langle \delta_t {\bf e}_h^{m+1} \rangle, q_h
\Big)_S =0.
\end{array} \right.
\leqno{(D_2)^{m+1}_h}
$$

Finally, adding $(D_1)^{m+1}_h$ and $(D_2)^{m+1}_h$ we obtain, for
all $({\bf v}_h, q_h) \in {\bf X}_h \times Q_h $:
$$
 \left \{ \begin{array}{l}
\displaystyle \frac{1}{k} \Big( \delta_t \ee_h - \delta_t {\bf
e}_h^m , {\bf v}_h \Big) + \Big(\nabla \, \delta_t \ee_h , \nabla
\, {\bf v}_h \Big) + \Big(\delta_t e_{p,h}^{m+1} ,\gradH \cdot
\langle{\bf v}_h\rangle\Big)_S
\\ \qquad = \Big(\delta_t \mathcal{E}^{m+1},
{\bf v}_h \Big) +\delta_t {\bf NL}_h^{m+1} ( {\bf v}_h )
-\displaystyle \frac{1}{k} \Big(\delta_t \ee_i -\delta_t  {\bf
e}_i^{m}, {\bf v}_h  \Big)  \\
\Big( \gradH\cdot\langle\delta_t {\bf e}_h^{m+1}\rangle , q_h
\Big)_S=0.
\end{array} \right.
\leqno{(D_3)^{m+1}_h}
$$

\begin{theorem}\label{dtt1} Under hypotheses of Theorem
   \ref{dt2} for $l=2$ (i.e.~$O(h^2)$ FE approximation),  ${\bf (R4)}$ and
assuming the following hypothesis for the first step of the scheme
  $$| \delta_t {\bf e}_h^{1} |\le C\, ( \sqrt{k} +h^2) ,$$
 then there exists $k_1 >0$ such that for any $k\le k_1$, 
$$
\| \deltaee_h \|_{l^\infty({\bf L}^2)\cap l^2({\bf H}^1)} +\|
\deltaemedio _h \|_{l^\infty({\bf L}^2)\cap l^2({\bf H}^1)}  \le
C\, (\sqrt{k}+ h^2) ,
$$
$$
\| \deltaee_h - \deltaemedio _h\|_{ l^2({\bf L}^2)} +\|
\deltaemedio _h -\delta_t{\bf e}_h^m \|_{ l^2({\bf L}^2)}  \le C\,
\sqrt{k}\, (\sqrt{k}+ h^2) .
$$
\end{theorem}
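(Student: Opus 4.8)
The plan is to reproduce, now at the level of the time-differenced error equations $(D_1)^{m+1}_h$--$(D_3)^{m+1}_h$, the energy argument carried out for Theorem~\ref{dt1}. Concretely I would form
$$
2\,k\sum_{m=1}^{r}\left\{\Big((D_1)^{m+1}_h,\deltaemedio_h\Big)+\Big((D_2)^{m+1}_h,\deltaee_h\Big)\right\}
$$
for an arbitrary $r<M$. Testing $(D_1)^{m+1}_h$ against $2k\,\deltaemedio_h$ and exploiting the skew-symmetry $c\big({\bf U}_h^{m-1},\deltaemedio_h,\deltaemedio_h\big)=0$ (which is exactly the discrete part of the last convective term of $\delta_t{\bf NL}^{m+1}_h$, since $\delta_t\emedio=\delta_t{\bf e}_i^{m+1}+\deltaemedio_h$) yields the telescoping left-hand side $|\deltaemedio_h|^2-|\delta_t{\bf e}_h^m|^2+|\deltaemedio_h-\delta_t{\bf e}_h^m|^2+2k\|\deltaemedio_h\|^2$, as in (\ref{des-interm}); testing $(D_2)^{m+1}_h$ against $2k\,\deltaee_h$ makes the pressure term $\big(\delta_t p_{s,h}^{m+1},\gradH\cdot\langle\deltaee_h\rangle\big)_S$ vanish by virtue of the discrete incompressibility $\big(\gradH\cdot\langle\delta_t{\bf e}_h^{m+1}\rangle,q_h\big)_S=0$, producing the same telescoping/dissipation structure in $\|\cdot\|$ as (\ref{paso2}). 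Summing from $m=1$ to $r$, the very first step being supplied by the hypothesis $|\delta_t{\bf e}_h^1|\le C(\sqrt{k}+h^2)$, and closing with the discrete Gronwall Lemma~\ref{GronwallD} (after choosing $\varepsilon,k$ small to absorb the $\varepsilon k\|\deltaemedio_h\|^2$ contributions into the dissipation) will deliver both announced estimates, the difference estimates being read off from the telescoped $|\deltaemedio_h-\delta_t{\bf e}_h^m|^2$ and $|\deltaee_h-\deltaemedio_h|^2$ terms.

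For the right-hand side I would bound the terms in the spirit of the proofs of Theorems~\ref{dt2} and \ref{eqdt1}. The differenced vertical-velocity error is handled by applying $\delta_t$ to $(E_0)^{m}_h$, which gives, as in (\ref{est-errorvertical})--(\ref{est2-errorvertical}), $|\partial_z\delta_t e_{3,h}^m|\le C\big(|\gradH\cdot\delta_t{\bf e}_h^m|+|\gradH\cdot\delta_t{\bf e}_i^m|\big)$ and $\|\delta_t e_{3,h}^m\|_{L_z^{\infty}L_{\bf x}^2}\le C\big(\|\delta_t{\bf e}_h^m\|+\|\delta_t{\bf e}_i^m\|\big)$, while (\ref{estHz}) together with ${\bf u}_{tt}\in L^2({\bf H}^l)$ from {\bf (R4)} furnishes the $O(h^l)$ interpolation bounds for $\delta_t{\bf e}_i^m$ and $\delta_t e_{3,i}^m$. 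Each of the four pieces of $\delta_t{\bf NL}^{m+1}_h$ is then split into an exact-solution part and an error part; for the vertical convection I integrate by parts to place $\partial_z$ on the smooth factor and apply the anisotropic Gagliardo--Nirenberg inequalities and the inverse inequalities of {\bf (H2)}. The precise regularity collected in {\bf (R4)} --- in particular ${\bf U}_t\in L^{\infty}({\bf L}^3)\cap L^2({\bf H}^{l+1})$, ${\bf u}_t\in L^{\infty}({\bf H}^2)$ and ${\bf U}_{tt}\in L^2({\bf L}^2)$ --- is exactly what absorbs the factors $\delta_t{\bf u}(t_{m+1})$, $\delta_t\emedio$ and $\delta_t{\bf U}_h^m$ now appearing in $\delta_t{\bf NL}^{m+1}_h$. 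The consistency contribution $2k\big(\delta_t\mathcal{E}^{m+1},\deltaemedio_h\big)$ is controlled by expressing $\delta_t\mathcal{E}^{m+1}$ through a second time-difference of the integral remainder and of the convective consistency term, the weighted regularity $\sqrt{t}\,{\bf u}_{ttt}\in L^2({\bf H}_{b,l}^{-1})$ and $\partial_t p_s\in L^2(H^l)$ yielding the required $O(\sqrt{k}+h^2)$ control away from the origin; the degeneracy of the $\sqrt{t}$ weight at $t=0$ is precisely what forces the separate first-step hypothesis.

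The delicate point, exactly as in Theorem~\ref{dt2}, is the vertical convection in $\delta_t{\bf NL}^{m+1}_h$: after integrating by parts and using $\|\cdot\|_{L_z^2L_{\bf x}^{\infty}}\le Ch^{-1}|\cdot|$ and $\|\cdot\|_{L^3}\le Ch^{-1/2}|\cdot|$, one generates negative powers of $h$ in front of quadratic products of the unknowns. The whole argument hinges on keeping these $h$-negative coefficients $O(1)$: writing the offending contributions in the form $\gamma_m\,a_m$ with $a_m=\|\delta_t{\bf e}_h^m\|^2$ and $\gamma_m\sim h^{-2}\|\emedio_h\|^2$, the weighted sum $k\sum_m\gamma_m\le Ch^{-2}(k+h^{2l})\le C$ stays bounded thanks to the already established $\|\emedio_h\|_{l^2({\bf H}^1)}\le C(\sqrt{k}+h^l)$ of Theorem~\ref{dt1} and the constraint {\bf (H)} (here $l=2$), so that the generalised discrete Gronwall inequality of Lemma~\ref{GronwallD}(b) applies. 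Verifying that every remaining term is either absorbed into the dissipation, contributes to the Gronwall weight $a_m$, or is tamed by the lower-order estimates of Theorems~\ref{dt1}, \ref{dt2} and \ref{eqdt1} combined with {\bf (H)}, is the main technical obstacle; once this bookkeeping is in place the stated estimates follow.
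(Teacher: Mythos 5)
Your skeleton coincides with the paper's: you test $(D_1)^{m+1}_h$ with $2k\,\deltaemedio_h$ and $(D_2)^{m+1}_h$ with $2k\,\deltaee_h$, use skew-symmetry to kill the discrete part of $c\big({\bf U}_h^{m-1},\delta_t\emedio,\cdot\big)$, let discrete incompressibility remove the pressure from the second identity, sum and apply Gronwall; your differenced estimates for $\delta_t e_{3,h}^m$, $\delta_t e_{3,i}^m$ and your reading of the consistency term (the $\sqrt{t}$ weight in ${\bf (R4)}$ forcing the separate first-step hypothesis) are all consistent with the paper. The genuine gap is in your bookkeeping of the critical convective term, which is exactly the heart of this theorem. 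You claim the worst contribution has the form $k\,\gamma_m a_m$ with $\gamma_m\sim h^{-2}\|\emedio_h\|^2$ and $a_m=\|\delta_t{\bf e}_h^m\|^2$, closed by $k\sum_m\gamma_m\le C\,h^{-2}(k+h^{2l})\le C$ via Theorem \ref{dt1}. That is the accounting of Theorem \ref{eqdt1}, where the telescoped energy is $\|{\bf e}_h^m\|^2$ and the ${\bf H}^1$ quantity is legitimately the Gronwall variable; transplanted here it is structurally inconsistent. After testing with $\deltaemedio_h$ the left-hand side telescopes $|\deltaemedio_h|^2-|\delta_t{\bf e}_h^m|^2$, so the only admissible Gronwall variable is $a_m=|\delta_t{\bf e}_h^m|^2$: a term $k\,\gamma_m\|\delta_t{\bf e}_h^m\|^2$ with merely $k\sum_m\gamma_m\le C$ can neither be absorbed by the dissipation (which tolerates only $\varepsilon\,k\,\|\cdot\|^2$ with $\varepsilon$ uniformly small) nor fed to Lemma \ref{GronwallD}, since $\|\delta_t{\bf e}_h^m\|^2$ does not appear in $l^\infty$ on the left.

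Concretely, the dangerous term the paper isolates is the vertical part of $c\big(\delta_t{\bf E}^m,\emedio_h,\deltaemedio_h\big)$ inside $\delta_t{\bf NL}_h^{m+1}$: two inverse inequalities stack up, $\|\emedio_h\|_{L_z^2L_{\bf x}^{\infty}}\le C\,h^{-1}|\emedio_h|$ and $\|\delta_t e_{3,h}^m\|_{L_z^{\infty}L_{\bf x}^2}\le C\|\delta_t{\bf e}_h^m\|\le C\,h^{-1}|\delta_t{\bf e}_h^m|$, and after Young's inequality one is left with $C\,k\,h^{-4}\big(|\emedio_h-{\bf e}_h^m|^2+|{\bf e}_h^m|^2\big)\,|\delta_t{\bf e}_h^m|^2$. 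For Gronwall one then needs $k\,h^{-4}\sum_m|{\bf e}_h^m|^2\le C$, and this is where $l=2$ is indispensable: it holds because Theorem \ref{dt2} supplies the \emph{optimal} $l^{\infty}({\bf L}^2)$ bound $|{\bf e}_h^m|\le C(k+h^2)$, so that $h^{-4}(k^2+h^4)\le C$ under ${\bf (H)}$; the suboptimal Theorem \ref{dt1} estimate is not enough, and for $l=1$ the condition fails outright, which is precisely why the paper defers $l=1$ to the modified scheme of Section 5 (Theorem \ref{mejorL2} and the discussion before Corollary \ref{epl1}). Your accounting, which would close with Theorem \ref{dt1} alone and hence for every $l\ge 1$, proves too much: it signals that you have not located the decisive $h^{-4}$-weighted term, and as written the proposal is missing the one estimate that makes the theorem true and explains its restriction to $O(h^2)$ elements.
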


\begin{proof} Since the initial estimate   $| \delta_t {\bf e}_h^{1}
|\le C\, ( \sqrt{k} +h^2) $ has been assumed, it  suffices to
prove  the  generic estimate for  $\deltaee_h$ and
$\deltaemedio_h$, for each $m\ge 1$.

 Taking  $2\, k\, \delta_t \emedio_h  \in {\bf X}_h$  as
test function in  $(D_1)^{m+1}_h$, one has
 \begin{equation}\label{dte1}
   \begin{array}{l}
     |  \delta_t \emedio_h | ^2 -
|  \delta_t  {\bf e}_h^m | ^2 + |  \delta_t \emedio_h -\delta_t
{\bf e}_h^m | ^2 + 2\,   \, k \| \delta_t  \emedio_h \| ^2
\\
  = -2\, k\, \Big ( \fra{1}{k} (\deltaee_i -\delta_t  {\bf e}_i ^m
  )  ,  \delta_t \emedio_h  \Big )
-2\, k\,  \Big (\nabla \, \deltaee _i ,\nabla   \delta_t \emedio_h
\Big )
\\
 + 2\,k \Big( \delta_t p_s (t_{m+1}) ,\gradH \cdot \langle  \delta_t
 \emedio_h \rangle \Big)+2\, k \Big(\delta_t \mathcal{E}^{m+1}, \delta_t \emedio_h \Big)+
  2\,k \delta_t  {\bf NL}_h^{m+1}  (   \delta_t \emedio_h  )
  \\
:= I_1 +  I_2 + I_3 +I_4+ I_5.
 \end{array}
\end{equation}

We bound the RHS  of (\ref{dte1}) as in Theorem \ref{dt1}
(recalling that now one has $O(h^2)$ approximation)
$$
  I_1
\le \varepsilon \, k\, \|\delta_t\emedio_h \|^2+ C\,  h^4
\int_{t_m}^{t_{m+2}} \| {\bf u}_{tt} \|_{H^2}^2
$$
$$I_2
\le \varepsilon \, k\, \|\delta_t\emedio_h \|^2+ C\,h^4
\int_{t_m}^{t_{m+1}} \|{\bf u}_t \|_{H^3} ^2
 $$
$$
 I_3
\le \varepsilon  | \delta_t \emedio_h -\delta_t {\bf e}_h^m |^2 +
C\,k^2 \|\delta_t p_s(t_{m+1})\|_{H^1(S)}^2 +
 \varepsilon \, k \| \delta_t{\bf e}_h^m \| ^2 + C\, k\, h^4 \|
\delta_t p_s(t_{m+1}) \|_{H^2} ^2
$$
The bound of $I_4$ depending on the consistency error $\delta_t \mathcal{E}^{m+1}$ is not
problematic.

 Now, we bound the more complicate terms of $I_5$,
again as in the proof of Theorem \ref{dt1}:
 \begin{eqnarray*}
 && 2\,k\, c\Big(\delta_t  e_{3,h}^m  , {\bf u}(t_{m+1}),
   \delta_t\emedio_h \Big) \le
  \varepsilon \, k(\| \delta_t {\bf e}_h^m \| ^2 +\| \delta_t {\bf e}_i^m \| ^2 )
  +  \varepsilon \, k\,  \| \delta_t \emedio_h  \|^2 +
   C\,k  \, | \delta_t \emedio_h  | ^2\\
&&\le  \varepsilon \, k\, \| \delta_t {\bf e}_h^m \| ^2+ C\, h^4
\int_{t_m}^{t_{m+1}} \| {\bf u}_t \|_{H^3}^2 +  \varepsilon \, k\,
  \| \delta_t \emedio_h  \|^2 +  C\,k \, | \delta_t \emedio_h |^2 ,
\end{eqnarray*}
\begin{eqnarray*}
  2\,k\, c\Big(\delta_t e_{3,i}^m , {\bf u}(t_{m+1}), \delta_t\emedio_h \Big) &\le&
    \varepsilon \, k\, \| \delta_t\emedio_h \|^2  +
    C\, k\, \| \delta_t e_{3,i}^m \|_{H(\partial_z)}^2\\
 &\le&   \varepsilon \, k\, \| \delta_t\emedio_h \|^2  +
    C\, h^4 \int_{t_m}^{t_{m+1}} \|  \partial_t u_3   \|_{H^3}^2 ,
\end{eqnarray*}
\begin{eqnarray*}
 && 2\,k\, c\Big(\delta_t e_{3,h}^m + \delta_t e_{3,i}^m , \ee_i ,
  \delta_t\emedio_h\Big) \le
 \varepsilon \, k\, \Big(\| \delta_t {\bf e}_h ^m \|^2 + \| \delta_t {\bf
   e}_i ^m \|^2 + \|\delta_t e_{3,i}^m \|_{H(\partial_z)}^2 \Big)
 +  C\,k \, |\delta_t \emedio_h | ^2 \\
&&\le  \varepsilon \, k\, \| \delta_t {\bf e}_h ^m \|^2 + C\, h^ 4
\int_{t_m}^{t_{m+1}} (\| {\bf u}_t \|_{H^3}^2+ \|  \partial_t u_3
\|_{H^3}^2) +  C\,k \,  | \delta_t \emedio_h | ^2 .
 \end{eqnarray*}

The vertical part of $2\, k\, c\Big({\bf E}^{m-1} , \delta_t {\bf u}(t_{m+1}) , \delta_t \emedio_h\Big)$ is decomposed as follows:
$$  2\, k\,  \Big (
e_{3,h}^{m-1} + e_{3,i}^{m-1}, \partial_z \delta_t {\bf u}(t_{m+1}) ,
\delta_t \ee_h \Big ) + k\,  \Big ( \partial_z (e_{3,h}^{m-1} + e_{3,i}^{m-1}),
\delta_t{\bf u}(t_{m+1}), \delta_t \ee_h \Big )
:=L_1 +L_2 $$
Since $L_2$ is easier to bound than $L_1$, we only bound $L_1$:
$$L_1 \le \| e_{3,h}^{m-1} + e_{3,i}^{m-1} \|_{L_z^{\infty} L_{\bf
    x}^2}  \, \|  \partial_z \delta_t {\bf u}(t_{m+1})\|_{L_z^{2} L_{\bf
    x}^4}\, \| \delta_t
\emedio_h \|_{L_z^{2} L_{\bf
    x}^4}$$
$$ \le \varepsilon\, k\, \| {\bf e}_h^m \| ^2 + C\, k\, h^{2l}
+ \varepsilon \, k\, \| \delta_t \emedio_h \|^2 + C\, k\, |  \delta_t \emedio_h |^2 .
$$

On the other hand, we bound the other terms of $I_5$ which have
not similar terms in the proof of Theorem \ref{dt1}:
 $$c\Big( \delta_t{\bf  U}_h^m , \emedio_h,\delta_t \emedio_h \Big)=
 -c\Big( \delta_t{\bf
  E}^m , \emedio_h,\delta_t \emedio_h \Big)+c\Big( \delta_t  {\bf
  U}(t_m) , \emedio_h,\delta_t \emedio_h \Big).$$
The second term of the RHS is bounded by $ \varepsilon \, k\, \|
\delta_t {\bf e}_h^{m+1/2} \|^2 + C\,k\,|{\bf e}_h^{m+1/2}|^2 $.
With respect to the first term on the RHS, the more complicate
term to bound is the vertical part:
\begin{eqnarray*}
  &&  -2\, k\, c\Big( \delta_t e_3^m,\emedio_h , \delta_t  \emedio_h \Big)\\
   && = -2\, k
\Big( (\delta_t e_{3,h}^m + \delta_t e_{3,i}^m)\, \emedio_h ,
\partial_z \delta_t \emedio_h \Big) +  k \Big(
\partial_z (\delta_t e_{3,h}^m + \delta_t e_{3,i}^m)\, \emedio_h ,
\delta_t \emedio_h\Big):= J_1+J_2
\end{eqnarray*}

Since $J_2$ is easier to bound than $J_1$, we only bound $J_1$ (by using the inverse inequalities $ \| {\bf v}_h \|
_{H_{\bf x} ^1} \le C\, h^{-1} \, \| {\bf v}_h \|_{L_{\bf x}^2} $
and $ \| {\bf v}_h \| _{L_{\bf x} ^{\infty}} \le C\, h^{-1} \, \|
{\bf v}_h \|_{L_{\bf x}^2} $):
\begin{eqnarray*}
  J_1 &\le&  2\, k  \| \delta_t e_{3,h}^m + \delta_t e_{3,i}^m\|
_{L_z^{\infty} L_{\bf x}^2} \, \| \emedio_h \|_{L_z^{2}  L_{\bf
x}^{\infty}} \, \| \partial_z \, \delta_t \emedio_h
\|_{L^2_zL^2_{\bf x}}  \\
  &\le&  C\, k \Big( \|
\delta_t e_{h}^m \| + \| \delta_t e_{i}^m\| +
\frac{h^2}{k}\int_{t_{m-1}}^{t_m} \| \partial_t  u_3 \|  _{H^3}
\Big)\frac{1}{h}
\, |\emedio_h | \, \| \delta_t \emedio_h \| \\
   &\le&  C\, k\,\Big(\frac{1}{h} | \delta_t {\bf e}_h ^m |
    + \frac{h^2}{k}\int_{t_{m-1}}^{t_m}
    (\| {\bf u}_t \|_{H^3}+ \| \partial_t  u_3 \|  _{H^3}) \Big) \frac{1}{h}  \,
|\emedio_h | \, \| \delta_t \emedio_h \|  \\
   &\le& \varepsilon \, k\,   \|\delta_t \emedio_h \|^2 + C\, k\,
\frac{1}{h^4} \Big ( |\emedio_h -{\bf e}_h^m |^2  + | {\bf e}_h^m
|^2 \Big ) |\delta_t {\bf e}_h^m | ^2 \\
&+& C \,h^2\left(\int_{t_{m-1}}^{t_m} \| {\bf U}_t \|^2_{H^3}
\right) \Big ( |\emedio_h -{\bf e}_h^m |^2  + | {\bf e}_h^m |^2
\Big ) .
\end{eqnarray*}

Notice that to  apply   the discrete  Gronwall's Lemma with  the term $ C\, k\,
\frac{1}{h^4} \, | {\bf e}_h^m
|^2 \, |\delta_t {\bf e}_h^m | ^2$, it  is necessary  that
 $k\,\frac{1}{h^4} \,\sum _{m} | {\bf e}_h^m |^2 \le C$ and  this is true for $l=2$.
 In Section 6 below, we will see a modified scheme where it is possible  to obtain optimal error  estimates $ O(k+h^l)$ for $l=1$. 

On the other hand, we have to bound
\begin{eqnarray*}
c\Big( {\bf U}_h^{m-1} , \delta_t \emedio,\delta_t \emedio_h\Big)
&=& c\Big( {\bf U}_h^{m-1} , \delta_t \ee_i ,\delta_t \emedio_h\Big) \\
  &=&  -c\Big({\bf E}^{m-1}, \delta_t \ee_i ,\delta_t
\emedio_h\Big)+ c\Big( {\bf U}(t_{m-1}) ,\delta_t \ee_i ,\delta_t
\emedio_h\Big)
\end{eqnarray*}
(here we have used that $c\Big( {\bf U}_h^{m-1} , \delta_t \ee_h
,\delta_t \emedio_h\Big)=0$).
 The  more complicate term is the vertical part:
\begin{eqnarray*}
  &&  2\, k\, c\Big(  e_3^{m-1},\delta_t \ee_i , \delta_t \ee_i \Big) \\
   && = 2\, k \Big(
 (e_{3,h}^{m-1} +  e_{3,i}^{m-1})\, \delta_t\ee_i , \partial_z
\,\delta_t \emedio_h \Big) - k \Big(
\partial_z ( e_{3,h}^{m-1} + e_{3,i}^{m-1})\, \delta_t\ee_i ,
\delta_t \emedio_h\Big):= K_1+K_2
\end{eqnarray*}
Since $K_2$ is easier to bound than $K_1$, we only bound $K_1$:
\begin{eqnarray*}
K_1 &\le& 2\, k \,\| e_{3,h}^{m-1} + e_{3,i}^{m-1}\|_{L_z^{\infty}
L_{\bf x}^2} \|\delta_t \ee_i \|_{L_z^2 L_{\bf x}^2}
\|\partial_z\delta_t \ee_h \|_{L^2_zL^{\infty}_{\bf x}}
\\ &\le& C\,k\,(\| {\bf e}_{h}^{m-1}\|+ h^2 \,\| {\bf U}(t_{m-1})
\|_{H^3} ) \, h^2 k^{-1}(\int_{t_m}^{t_{m+1}}\|{\bf u}_t(t_{m+1}) \|_{H^2}) \, h^{-1} \, \| \delta_t\ee_h \|\\
 & \le& \varepsilon \ k \,  \| \delta_t\ee _h \| ^2 + C\, h^2
\int_{t_m}^{t_{m+1}}\|{\bf u}_t(t_{m+1}) \|^2_{H^2} ( \|{\bf e}_h
^{m-1} \|^2 + h^2).
\end{eqnarray*}

On the other hand, making $ 2\, k\,  \Big((D_2)_h^{m},\delta_t
{\bf e}_h^{m+1}   \Big)$, we arrive at
 \begin{equation}\label{dte2}
   \begin{array}{l}
   |\delta_t \ee_h |^2 - | \delta_t {\bf e}_h ^{m+1/2} |^2 +
   |\delta_t {\bf e}_h^{m+1}  - \delta_t {\bf e}_h ^{m+1/2} |^2\\
   +  k\,\Big\{ \|\delta_t {\bf e}_h^{m+1}  \|^2
-\|\delta_t {\bf e}_h ^{m+1/2}   \|^2 + \| \delta_t {\bf
e}_h^{m+1} - \delta_t {\bf e}_h ^{m+1/2} \|^2 \Big\}
   =0.
\end{array}
\end{equation}

 Reasoning as in Theorem \ref{dt1}, adding (\ref{dte1}) and
 (\ref{dte2}) from $m=0$ to $r$ (with any $r<M$), taking into account
 the previous estimates and choising $\varepsilon$ and $k$ small enough,
  we can apply the discrete  Gronwall's Lemma obtaining the desired
  estimates.
\end{proof}

\subsubsection{$O( k+h^2)$ error estimates for $\deltaee_h$  in
$l ^{\infty}( {\bf L}^2) \cap l^{2}( {\bf H}^1)$
}

\begin{theorem}\label{dtt2} Under hypotheses of Theorem~\ref{dtt1} and
 {\bf (R5)},
 assuming the following hypothesis for the first step of the scheme
  $$| \delta_t {\bf e}_h^{1} |\le C\, (k+ h^2) ,$$
 then
$$
\| \deltaee_h \|_{l^\infty({\bf L}^2)\cap l^2({\bf H}^1)}   \le
C\, (k+ h^2)  .
$$
\end{theorem}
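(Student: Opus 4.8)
The plan is to reproduce, at the level of the time-differentiated error system $(D_3)_h^{m+1}$, the upgrade carried out in Theorem~\ref{dt2} for the end-of-step velocity, thereby raising the $O(\sqrt{k}+h^2)$ accuracy of Theorem~\ref{dtt1} to the optimal $O(k+h^2)$ for $\delta_t {\bf e}_h^{m+1}$. Since the first step is controlled by the hypothesis $|\delta_t {\bf e}_h^{1}|\le C(k+h^2)$, it suffices to establish the generic estimate for each $m\ge 1$.

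First I would take $2k\,\delta_t {\bf e}_h^{m+1}\in{\bf X}_h$ as test function in $(D_3)_h^{m+1}$ and add from $m=1$ to $r<M$. Since $\delta_t {\bf e}_h^{m+1}$ satisfies the discrete incompressibility constraint in the second line of $(D_3)_h^{m+1}$, the pressure term $\big(\delta_t e_{p,h}^{m+1},\gradH\cdot\langle\delta_t {\bf e}_h^{m+1}\rangle\big)_S$ vanishes, while the diagonal convective contribution $c\big({\bf U}_h^{m-1},\delta_t {\bf e}_h^{m+1},\delta_t {\bf e}_h^{m+1}\big)=0$ by skew-symmetry. The discrete time-derivative term produces the telescoping quantity
$$
|\delta_t {\bf e}_h^{m+1}|^2-|\delta_t {\bf e}_h^{m}|^2+|\delta_t {\bf e}_h^{m+1}-\delta_t {\bf e}_h^{m}|^2+2k\,\|\delta_t {\bf e}_h^{m+1}\|^2 ,
$$
which after summation controls $\|\delta_t {\bf e}_h^{m+1}\|_{l^\infty({\bf L}^2)\cap l^2({\bf H}^1)}$. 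The consistency contribution $2k\big(\delta_t\mathcal{E}^{m+1},\delta_t {\bf e}_h^{m+1}\big)$ is now estimated in the ${\bf V}'$--${\bf V}$ duality using the improved regularity ${\bf u}_{ttt}\in L^2({\bf V}')$ furnished by ${\bf (R5)}$; this is exactly what produces an $O(k)$ rather than an $O(\sqrt{k})$ time-order, as ${\bf (R2)}$ did for Theorem~\ref{dt2}. The interpolation-in-time terms built from $\delta_t {\bf e}_i$ are bounded through ${\bf (H3)}$ as before.

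For the nonlinear part $2k\,\delta_t {\bf NL}_h^{m+1}(\delta_t {\bf e}_h^{m+1})$, every term is estimated as in the proof of Theorem~\ref{dtt1}, except that testing with the end-of-step $\delta_t {\bf e}_h^{m+1}$ (instead of with $\delta_t {\bf e}_h^{m+1/2}$, as in Theorem~\ref{dtt1}) makes the term $c\big({\bf U}_h^{m-1},\delta_t {\bf e}_h^{m+1/2},\delta_t {\bf e}_h^{m+1}\big)$ no longer vanish by skew-symmetry. As in the term $I_4$ of Theorem~\ref{dt2}, I would split ${\bf U}_h^{m-1}={\bf U}(t_{m-1})-{\bf E}^{m-1}$ and, using $c\big({\bf U}_h^{m-1},\delta_t {\bf e}_h^{m+1},\delta_t {\bf e}_h^{m+1}\big)=0$, replace $\delta_t {\bf e}_h^{m+1/2}$ by the increment $\delta_t {\bf e}_h^{m+1/2}-\delta_t {\bf e}_h^{m+1}$. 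Bounding the singular vertical convection by means of the inverse inequalities of ${\bf (H2)}$ and of (\ref{est2-errorvertical})--(\ref{est-interpo-vertical-bis}) then generates discrete Gronwall coefficients of the types $C\,h^{-4}|{\bf e}_h^{m}|^2$ and $C\,h^{-4}|\delta_t {\bf e}_h^{m+1/2}-\delta_t {\bf e}_h^{m+1}|^2$ multiplying $|\delta_t {\bf e}_h^{m}|^2$.

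The step I expect to be the main obstacle is to verify that these coefficients are summable in time, since they carry the singular factor $h^{-4}$ coming from the inverse bounds on the vertical convective term; this is precisely where the order $l=2$ and the constraint ${\bf (H)}$ become indispensable. On one hand, Theorem~\ref{dt2} with $l=2$ gives $\|{\bf e}_h^{m}\|_{l^\infty({\bf L}^2)}\le C(k+h^2)$, whence $h^{-4}\,\|{\bf e}_h^{m}\|_{l^2({\bf L}^2)}^2\le C\,h^{-4}(k+h^2)^2\le C$ under ${\bf (H)}$; on the other hand, from $\|\delta_t {\bf e}_h^{m+1}-\delta_t {\bf e}_h^{m+1/2}\|_{l^2({\bf L}^2)}\le C\sqrt{k}(\sqrt{k}+h^2)$ of Theorem~\ref{dtt1} and $k\le h^2$,
$$
h^{-4}\,\|\delta_t {\bf e}_h^{m+1}-\delta_t {\bf e}_h^{m+1/2}\|_{l^2({\bf L}^2)}^2\le C\,h^{-2}(k+h^4)\le C .
$$
With both coefficients controlled, together with the $O(k+h^2)$ forcing generated by the consistency and interpolation errors and the initial bound $|\delta_t {\bf e}_h^{1}|\le C(k+h^2)$, the generalized discrete Gronwall inequality of Lemma~\ref{GronwallD} applied to $a_m=|\delta_t {\bf e}_h^{m}|^2$ yields the claimed estimate $\|\delta_t {\bf e}_h^{m+1}\|_{l^\infty({\bf L}^2)\cap l^2({\bf H}^1)}\le C(k+h^2)$.
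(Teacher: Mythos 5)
Your proposal is correct and follows essentially the same route as the paper: you test $(D_3)_h^{m+1}$ with $2k\,\delta_t{\bf e}_h^{m+1}$ (pressure term vanishing by the discrete constraint), use skew-symmetry $c\big({\bf U}_h^{m-1},\delta_t{\bf e}_h^{m+1},\delta_t{\bf e}_h^{m+1}\big)=0$ to replace $\delta_t{\bf e}_h^{m+1/2}$ by the increment $\delta_t{\bf e}_h^{m+1/2}-\delta_t{\bf e}_h^{m+1}$, split ${\bf U}_h^{m-1}$ into ${\bf U}(t_{m-1})$ and ${\bf E}^{m-1}$, invoke {\bf (R5)} for the differentiated consistency error, and control the $h^{-4}$-weighted vertical-convection terms through Theorems \ref{dt2} and \ref{dtt1} together with {\bf (H)} before applying the discrete Gronwall lemma — exactly the paper's argument. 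The only cosmetic difference is bookkeeping: the paper bounds the critical product $C\,k\,h^{-4}\,|\delta_t{\bf e}_h^{m+1/2}-\delta_t{\bf e}_h^{m+1}|^2\,|{\bf e}_h^{m-1}|^2$ directly as forcing of size $O(k^2+h^4)$ using $|{\bf e}_h^{m-1}|^2\le C(k^2+h^4)$ from Theorem \ref{dt2}, whereas you file the same quantities as summable Gronwall coefficients, the summability checks you perform being precisely the ones the paper uses.
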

\begin{proof} The main idea is to make $2\, k\,\Big ((D_3)^{m+1}_h ,
\delta_t \ee_h \Big )$. Now,  the pressure term vanish but the
term $c\Big( {\bf U}_{h}^{m-1} , \delta_t \emedio_h , \delta_t
\ee_h \Big)\not=0$ and can be decomposed as:
\begin{eqnarray*}
&& 2\, k\, c\Big( {\bf U}_{h}^{m-1} , \delta_t \emedio_h ,
\delta_t \ee_h \Big) = 2\, k\, c\Big( {\bf U} _{h}^{m-1} ,
\delta_t \emedio_h - \delta_t \ee_h , \delta_t \ee_h \Big) \\
&&= -2\, k\, c\Big( {\bf E}^{m-1} , \delta_t \emedio_h -\delta_t
\ee_h ,\delta_t \ee_h \Big)+2\,k\, c\Big(  {\bf U}(t_{m-1}) ,
\delta_t \emedio_h - \delta_t \ee_h , \delta_t \ee_h \Big):=
I_1+I_2.
\end{eqnarray*}
  The  more complicate term in $I_1$ is the vertical part:
\begin{eqnarray*}
   && k\, \Big( (e_{3,h}^{m-1}+ e_{3,i}^{m-1})\,
    (\delta_t \emedio_h - \delta_t \ee_h)
,\partial _z  \delta_t \ee_h \Big)
\\
&& \le k \| e_{3,h}^{m-1}+ e_{3,i}^{m-1} \|_{L^{\infty}_z L^2_{\bf
x}} \|\delta_t \emedio_h - \delta_t \ee_h \|_{L^2_z
L^{\infty}_{\bf x}}  \| \partial _z \delta_t \ee_h
\|_{L^2_zL^2_{\bf x}}
\\
&& \le \frac{k}{h^2} |{\bf e}_h^{m-1} | \,|\delta_t \emedio_h -
\delta_t \ee_h | \, \| \delta_t \ee_h \|+ \frac{k}{h} \| {\bf
e}_i^{m-1} \|\, |\delta_t \emedio_h - \delta_t \ee_h | \, \|
\delta_t \ee_h \|
\\
&&+ C \frac{k}{h}\, \| e_{3,i}^{m-1}\|_{L^{\infty}_z L^2_{\bf x}}
\,|\delta_t \emedio_h - \delta_t \ee_h | \, \| \delta_t \ee_h \|
\\
   &&\le  \varepsilon \, k\, \| \delta_t \ee_h \|^2 + C\, \frac{k}{h^4}
|\delta_t \emedio_h - \delta_t \ee_h |^2 \ |{\bf e}_h^{m-1} |^2 \\
&&+ C\, k \, h^2 \, |\delta_t \emedio_h - \delta_t \ee_h |^2
\end{eqnarray*}
(here, we have used the inverse inequalities $ \|{\bf
v}\|_{L^{\infty}_{\bf x}} \le C\, h^{-1}\, | {\bf v} |_{L^2_{\bf
x}} $ and $ \|{\bf v}\| \le C\, h^{-1}\, | {\bf v} |$  and the
estimates
 (\ref{est2-errorvertical}) and (\ref{est-interpo-vertical-bis})).
Since $|{\bf e}_h^{m-1} |^2 \le C\, (k^2+h^4)$, adding the third
term of the RHS of the previous inequality,
$$C\, k\, h^{-4} \, \sum_m
|\delta_t\emedio_h -\delta_t \ee_h | ^2 |{\bf e}_h^{m-1} |^2  \le
C\,k\, h^{-4} (k+h^4)(k^2+h^4) \le C \, (k^2+h^4),$$ where  ${\bf
(H)}$ and estimates of Theorem~\ref{dtt1} have been used.

 The  more complicate term in $I_2$ is the vertical part:
$$
2\,k\, c\Big(  u_3(t_{m-1}) , \delta_t \emedio_h - \delta_t \ee_h
, \delta_t \ee_h \Big)\le\varepsilon \, k\, \| \delta_t \ee_h \|^2
+ C\, k  \, |\delta_t \emedio_h - \delta_t \ee_h |^2 .
$$
Adding from $m=0$ to $r$ (with any $r<M$), we can  apply the
discrete  Gronwall's Lemma obtaining  the desired estimates.
\end{proof} Again, from estimates of previous Theorems and applying
the hydrostatic \emph{Inf-Sup} condition {\bf (H1)}, we arrive at
the following optimal error estimate for the pressure.
 \begin{corollary}\label{epl2l2bis}
 Assuming hypotheses of Theorem \ref{dtt2}, one has
 $$\| e_{p,h}^{m+1} \| _{l^2(L^2)} \le C\,(k+ h^2 ) .$$
 \end{corollary}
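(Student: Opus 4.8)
The plan is to recover $e_{p,h}^{m+1}$ from the combined velocity error equation $(E)_h^{m+1}$ and to convert the velocity estimates already established into a bound on the pressure through the hydrostatic \emph{Inf-Sup} condition ${\bf (H1)}$. First I would isolate the pressure term in $(E)_h^{m+1}$: for every ${\bf v}_h\in{\bf X}_h$,
$$\Big(e_{p,h}^{m+1},\gradH\cdot\langle{\bf v}_h\rangle\Big)_S=\Big(\deltaee_h,{\bf v}_h\Big)+\Big(\nabla\ee_h,\nabla{\bf v}_h\Big)+\Big(\delta_t\ee_i,{\bf v}_h\Big)-{\bf NL}^{m+1}({\bf v}_h)-\Big(\mathcal{E}^{m+1},{\bf v}_h\Big),$$
where $\deltaee_h=\frac{1}{k}(\ee_h-{\bf e}_h^m)$ and $\delta_t\ee_i=\frac{1}{k}(\ee_i-{\bf e}_i^m)$. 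Dividing by the denominator $|\gradH{\bf v}_h|+\|\partial_z{\bf v}_h\|_{L^3}$ of ${\bf (H1)}$, taking the supremum over ${\bf v}_h\neq0$ and choosing $q_h=e_{p,h}^{m+1}$, the Inf-Sup condition gives $\beta\,\|e_{p,h}^{m+1}\|_{L^2(S)}$ bounded by the five right-hand contributions measured in that same dual norm.

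For the three linear terms this is immediate. Since ${\bf X}_h\subset{\bf H}^1_{b,l}$, Poincar\'e's inequality gives $|{\bf v}_h|\le C|\gradH{\bf v}_h|$, and the embedding $L^3(\Omega)\hookrightarrow L^2(\Omega)$ gives $|\nabla{\bf v}_h|\le C(|\gradH{\bf v}_h|+\|\partial_z{\bf v}_h\|_{L^3})$; hence the first three terms are controlled by $C(|\deltaee_h|+\|\ee_h\|+|\delta_t\ee_i|)$ times the denominator. The consistency term supplies the $O(k)$ part: its Taylor-truncation piece is $O(k)$ in $l^2({\bf H}_{b,l}^{-1})$ thanks to ${\bf u}_{tt}\in L^2$, so that $(\mathcal{E}^{m+1},{\bf v}_h)\le C\|\mathcal{E}^{m+1}\|_{{\bf H}^{-1}_{b,l}}|\nabla{\bf v}_h|$. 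Multiplying by $k$, summing in $m$ and taking square roots, I would then invoke the estimates already proved: $\|\deltaee_h\|_{l^2(L^2)}\le\sqrt{T}\,\|\deltaee_h\|_{l^\infty(L^2)}\le C(k+h^2)$ from Theorem~\ref{dtt2}, $\|\ee_h\|_{l^2(H^1)}\le C(k+h^2)$ from Theorem~\ref{dt2} (with $l=2$), and the interpolation bound $\|\delta_t\ee_i\|_{l^2(L^2)}\le C\,h^2\|{\bf u}_t\|_{L^2({\bf H}^2)}$ furnished by ${\bf (H3)}$ and the regularity ${\bf (R4)}$.

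The genuinely delicate point, and the one I expect to be the main obstacle, is the nonlinear term
$${\bf NL}^{m+1}({\bf v}_h)=-c\Big({\bf E}_h^m+{\bf E}_i^m,{\bf u}(t_{m+1}),{\bf v}_h\Big)-c\Big({\bf U}_h^m,\emedio_h+\ee_i,{\bf v}_h\Big),$$
which must be matched against the non-standard norm $|\gradH{\bf v}_h|+\|\partial_z{\bf v}_h\|_{L^3}$ rather than the plain $H^1$ norm. Exactly as in the proofs of Theorems~\ref{dt1}--\ref{dtt2}, I would treat the horizontal convection with the horizontal Gagliardo--Nirenberg inequality and split the vertical convection through the skew-symmetric form, bounding it with the anisotropic inequality (\ref{du_4}) applied to $e_{3,h}^m$; the factor $\|\partial_z{\bf v}_h\|_{L^3}$ produced by the vertical term is then absorbed precisely into the Inf-Sup denominator. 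Summing in $l^2$, every such contribution reduces to a product of $\|{\bf e}_h^m\|$ (or of $\|e_{3,h}^m\|_{H(\partial_z)}$, for which $\|e_{3,h}^{m+1}\|_{l^2(H(\partial_z))}\le C(k+h^2)$ was recorded after Theorem~\ref{dt2}) with an $l^\infty$-bounded norm of the exact solution, and is therefore again $O(k+h^2)$. Collecting all the bounds yields $\|e_{p,h}^{m+1}\|_{l^2(L^2)}\le C(k+h^2)$. Note that no discrete Gronwall argument is needed here; the whole difficulty lies in carefully fitting each nonlinear factor to the $L^3$-type test-function norm dictated by the hydrostatic Inf-Sup condition.
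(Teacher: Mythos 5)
Your overall strategy coincides with the paper's: the paper proves this corollary in one sentence, exactly as Corollary \ref{epl2l2}, by isolating $e_{p,h}^{m+1}$ in $(E)_h^{m+1}$, invoking the hydrostatic \emph{Inf-Sup} condition {\bf (H1)}, and feeding in the velocity estimates of the preceding theorems. Your treatment of the linear terms, of the interpolation term $\delta_t\ee_i$, and of the consistency error $\mathcal{E}^{m+1}$ (giving the $O(k)$ part via ${\bf u}_{tt}\in L^2$ and ${\bf U}_t\in L^2$) is correct.

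There is, however, a genuine gap in your handling of ${\bf NL}^{m+1}$, and it sits precisely where the difference between $O(\sqrt k+h^2)$ and $O(k+h^2)$ is decided. You assert that after the anisotropic H\"older and inverse-inequality manipulations ``every such contribution reduces to a product of $\|{\bf e}_h^m\|$ (or of $\|e_{3,h}^m\|_{H(\partial_z)}$) with an $l^\infty$-bounded norm of the exact solution''. This is false for the piece $c\big({\bf U}_h^m,\emedio_h+\ee_i,{\bf v}_h\big)$: there the error factor is the \emph{intermediate} velocity $\emedio_h$, not the end-of-step error, and ${\bf U}_h^m$ is a discrete field, not the exact solution. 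If your H\"older splitting lands on $\|\emedio_h\|_{{\bf H}^1}$, the only available summed bound is $\|\emedio_h\|_{l^2({\bf H}^1)}\le C(\sqrt k+h^2)$ from Theorem \ref{dt1} --- Theorems \ref{dtt1}--\ref{dtt2} improve $\delta_t\emedio_h$, not $\emedio_h$ itself --- and your argument then only reproduces the suboptimal Corollary \ref{epl2l2}. To get $O(k+h^2)$ you must make sure $\emedio_h$ is measured exclusively in ${\bf L}^2$, where it \emph{is} optimal: $\|\emedio_h\|_{l^2({\bf L}^2)}\le C(k+h^2)$, by combining (\ref{dtr1bis}) with Theorem \ref{dt2} and {\bf (H)}. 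Concretely: split ${\bf U}_h^m={\bf U}(t_m)-{\bf E}^m$; for the exact part put all derivatives on ${\bf v}_h$ (using $\nabla\cdot{\bf U}(t_m)=0$ and ${\bf U}\in L^\infty({\bf H}^3)$), which yields a bound $C\,|\emedio_h|\,\big(|\gradH{\bf v}_h|+\|\partial_z{\bf v}_h\|_{L^3}\big)$; for the error part, pay inverse-inequality factors ($\|\emedio_h\|_{L^2_zL^\infty_{\bf x}}\le C h^{-1}|\emedio_h|$, $\|\emedio_h\|_{L^3}\le C h^{-1/2}|\emedio_h|$ from {\bf (H2)}) and absorb them with the $l^\infty$-in-time bound $\|{\bf e}_h^m\|\le C h^{-1}|{\bf e}_h^m|\le C h^{-1}(k+h^2)\le C h$, which uses Theorem \ref{dt2} and {\bf (H)} (or Theorem \ref{eqdt1} with {\bf (H)}) --- an ingredient your proposal never invokes but genuinely needs. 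That this is the sensitive spot is confirmed by the paper's remark immediately after the corollary: the pressure estimate cannot be upgraded to $l^\infty(L^2)$ precisely because the convective term depends on $\emedio_h$, which is optimal only in $l^2({\bf L}^2)$.
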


 Notice that  the previous estimate for the pressure is not
 obtained in the  $l^\infty(L^2)$ norm,
  due to the convective term depending on the intermediate error
   ${\bf e}^{m+1/2}_h$ which has not optimal approximation
  in $l^\infty({\bf L}^2)$, only in $l^2({\bf L}^2)$.

\section{A modified scheme with integral 
 computation for the vertical velocity.}\label{Se:structured meshes}


In this section, we will approximate the problem  $(Q)$.
We consider the variational formulation of $(Q)$ satisfied for the
exact solution $({\bf u},p_s)$ at $t=t_{m+1}$:
$$
\left\{   \begin{array}{l} \displaystyle
\Big(\displaystyle\frac{1}{k}({\bf u}(t_{m+1})-{\bf u}(t_m)),{\bf
v}\Big) + c \Big({\bf U}(t_{m}),{\bf u}(t_{m+1}), {\bf v}\Big) +
\Big(\nabla {\bf u}(t_{m+1}), \nabla {\bf v}\Big)
\\
-\Big(p_s(t_{m+1}) ,\gradH \cdot \langle {\bf v} \rangle \Big)_S =
\Big\langle{\bf f}(t_{m+1}), {\bf v}\Big\rangle_\Omega
+\Big\langle{\bf g}_s(t_{m+1}), {\bf v}\Big\rangle_{\Gamma_s}
+\Big(\mathcal{E}^{m+1},{\bf v}\Big) 
\\
\Big(\gradH \cdot \langle {\bf u}(t_{m+1}) \rangle,q \Big)_S=0 ,
 \end{array}
\right.\leqno{(Q)_w^{m+1}}
$$
for each $({\bf v},q) \in \Big ( 
{\bf
W}_{b,l}^{1,3}\cap{\bf L}^\infty \Big ) \times L^2_0(S)  $,
where
$$ u_{3} (t_m; {\bf x},z)  = \int_z^0 \gradH\cdot  {\bf
 u}(t_m;{\bf x},s)\, ds.
 $$
\subsection{$O(k+h^{l+1})$ for $\ee_h$ in $l^{2}({\bf L}^{2})$}


 We change in the scheme the
 computation of the vertical velocity $u_{3,h}^m$ in $(S_0)_h^{m+1} $, replacing Sub-step~0 by the vertical integral computation
  $$u_{3,h}^m ({\bf x},z)  = \int_z^0 \gradH\cdot  {\bf
 u}_h^m({\bf x},s)\, ds.
 $$
 Accordingly, we will change the vertical interpolation operator 
as follows
$$
\overline{K}_h u_{3} ({\bf x},z)  = \int_z^0 \gradH \cdot I_h {\bf
 u}({\bf x},s)\, ds,
$$
hence  $e_{3,i}^m = \int_z^0 \nabla_{{\bf x}} \cdot {\bf e}_i^m $.
 This interpolation operator $\overline{K}_h$ conserves the  properties already used  for the interpolation
$K_h$ and consequently, properties (\ref{est-errorvertical})-(\ref{est-interpo-vertical-bis}) can be used
 in this context. 

\begin{theorem} \label{mejorL2}
Assuming hypotheses of Theorem~\ref{dt2}, $({\bf R3})$ 
  and $\|A_h ^{-1} {\bf e}_h^0 \| \le C\, h^{l+1} $
(recall that $A_h$ is the discrete hydrostatic Stokes operator used  in
(\ref{prob-hydrostatic})), then there exists $k_0>0$ such that for any $k\le k_0$,  the following error estimates hold
\begin{equation}\label{mejoraL2}
\| {\bf e}_h^{m+1} \| _{ l ^{2}( {\bf L}^2)} \le C (k+ h^{l+1}).
\end{equation}
\end{theorem}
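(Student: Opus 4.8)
The plan is to run a fully discrete duality (negative-norm) argument, testing the combined error equation (the analogue of $(E)_h^{m+1}$ for the modified scheme) with ${\bf v}_h = 2k\,A_h^{-1}{\bf e}_h^{m+1}$. This is admissible because the discrete incompressibility constraint forces ${\bf e}_h^{m+1}\in{\bf V}_h$, so $A_h^{-1}{\bf e}_h^{m+1}$ is well defined. Writing $\|{\bf w}_h\|_{V_h'}^2 = ({\bf w}_h, A_h^{-1}{\bf w}_h) = |\nabla A_h^{-1}{\bf w}_h|^2$ and using that $A_h^{-1}$ is symmetric and positive, the discrete time derivative produces the telescoping quantity $\|{\bf e}_h^{m+1}\|_{V_h'}^2 - \|{\bf e}_h^m\|_{V_h'}^2 + \|{\bf e}_h^{m+1}-{\bf e}_h^m\|_{V_h'}^2$, the viscous term reduces, by the very definition (\ref{prob-hydrostatic}) of $A_h^{-1}$, to $2k\,|{\bf e}_h^{m+1}|^2$, and the pressure term vanishes since $A_h^{-1}{\bf e}_h^{m+1}\in{\bf V}_h$. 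Summing over $m$ leaves, on the left, $\|{\bf e}_h^{r+1}\|_{V_h'}^2 + 2k\sum_m|{\bf e}_h^{m+1}|^2$; the second summand is exactly $\|{\bf e}_h^{m+1}\|_{l^2(L^2)}^2$, so the whole game is to dominate the right-hand side by $C(k+h^{l+1})^2$ plus Gronwall and absorbable terms.

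The extra power of $h$ must come entirely from the data (interpolation and consistency) terms, since the genuinely discrete contributions will only be controlled by absorption and Gronwall. For the interpolation-in-time term one combines $|{\bf e}_i^{m+1}-{\bf e}_i^m|\le C\,h^l\int_{t_m}^{t_{m+1}}\|{\bf u}_t\|_{H^l}$ with $|A_h^{-1}{\bf e}_h^{m+1}|\le C\|{\bf e}_h^{m+1}\|_{V_h'}$; after Young and summation this is $O(h^{2l}k)$, which is $O(h^{2(l+1)})$ precisely because of ${\bf (H)}$, $k\le h^2$. The consistency term $\mathcal{E}^{m+1}$ is $O(k)$ in the relevant norms (using ${\bf u}_{tt}\in L^2({\bf L}^2)$ and ${\bf U}_t\in L^2({\bf H}^1)$ from $({\bf R3})$), hence contributes $O(k^2)\le C(k+h^{l+1})^2$. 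The crucial point, and the place where the integral reconstruction of $u_3$ is indispensable, is the interpolation convective terms $c({\bf E}_i^m,{\bf u}(t_{m+1}),A_h^{-1}{\bf e}_h^{m+1})$ and $c({\bf U}_h^m,{\bf e}_i^{m+1},A_h^{-1}{\bf e}_h^{m+1})$. Because $e_{3,i}^m=\int_z^0\gradH\cdot{\bf e}_i^m$, the horizontal derivative carried by the vertical interpolation error can be integrated by parts in ${\bf x}$ and transferred onto the smooth factors ${\bf u}(t_{m+1})$ and $A_h^{-1}{\bf e}_h^{m+1}$ (the latter enjoying the $W^{1,6}$-regularity of Lemma~\ref{regw16}, $\|A_h^{-1}{\bf e}_h^{m+1}\|_{W^{1,6}}\le C|{\bf e}_h^{m+1}|$). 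This replaces $\|{\bf e}_i^m\|$ ($O(h^l)$) by $|{\bf e}_i^m|$, which is $O(h^{l+1})$ thanks to $h^{-1}|{\bf v}-I_h{\bf v}|\le Ch^l\|{\bf v}\|_{H^{l+1}}$ of $({\bf H3})$ and the regularity ${\bf u}\in L^\infty({\bf H}^{l+1})$ of $({\bf R1})$; pairing against $|{\bf e}_h^{m+1}|$ then yields $O(h^{2(l+1)})$. The very same integration-by-parts device is the only way to recover the lost order for the vertical convective terms, which are the genuine difficulty of the primitive equations and the main obstacle of the whole proof.

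For the purely discrete convective contributions I would not attempt to gain an order: in $c({\bf E}_h^m,{\bf u}(t_{m+1}),A_h^{-1}{\bf e}_h^{m+1})$ I keep the \emph{weak} norm on the test function, bounding $|A_h^{-1}{\bf e}_h^{m+1}|_{L^6}$ and $|\nabla A_h^{-1}{\bf e}_h^{m+1}|$ by $C\|{\bf e}_h^{m+1}\|_{V_h'}$ and using $e_{3,h}^m=\int_z^0\gradH\cdot{\bf e}_h^m$ to integrate by parts, so that each such term is dominated by $C|{\bf e}_h^m|\,\|{\bf e}_h^{m+1}\|_{V_h'}$; Young's inequality then absorbs $\varepsilon k|{\bf e}_h^m|^2$ into the dissipation $2k\sum|{\bf e}_h^{m+1}|^2$ (after an index shift, leaving only $\varepsilon k|{\bf e}_h^0|^2\le Ckh^{2l}=O(h^{2(l+1)})$), while $Ck\|{\bf e}_h^{m+1}\|_{V_h'}^2$ becomes a Gronwall term. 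The term carrying the intermediate error, $c({\bf U}_h^m,{\bf e}_h^{m+1/2},A_h^{-1}{\bf e}_h^{m+1})$, is the subtle one: I would split ${\bf e}_h^{m+1/2}={\bf e}_h^{m+1}+({\bf e}_h^{m+1/2}-{\bf e}_h^{m+1})$, treat the ${\bf e}_h^{m+1}$ part as above (after subtracting the exact convective velocity ${\bf U}(t_m)$ and exploiting its divergence-free character and $L^\infty$ regularity from $({\bf R1})$), and control the remainder by $Ck\sum|{\bf e}_h^{m+1/2}-{\bf e}_h^{m+1}|^2 = C\|{\bf e}_h^{m+1}-{\bf e}_h^{m+1/2}\|_{l^2(L^2)}^2$, which is $O((k+h^{l+1})^2)$ by estimate (\ref{dtr1bis}) of Theorem~\ref{dt1} together with ${\bf (H)}$ (since $\sqrt{k}\,h^l\le h^{l+1}$). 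Collecting everything, choosing $\varepsilon$ small and $k\le k_0$ so that the $k\,\|{\bf e}_h^{r+1}\|_{V_h'}^2$ produced by the index shift is absorbable, and invoking the hypotheses $\|A_h^{-1}{\bf e}_h^0\|\le Ch^{l+1}$ and $|{\bf e}_h^0|\le Ch^l$, the discrete Gronwall Lemma~\ref{GronwallD} delivers $\|{\bf e}_h^{r+1}\|_{V_h'}^2+k\sum_m|{\bf e}_h^{m+1}|^2\le C(k+h^{l+1})^2$, which is (\ref{mejoraL2}). Throughout, the a priori bounds of Theorems~\ref{dt1}--\ref{dt2} and the constraint ${\bf (H)}$ are exactly what keep every discrete remainder either absorbable into the dissipation or of size $O((k+h^{l+1})^2)$.
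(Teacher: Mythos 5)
Your proposal follows essentially the same route as the paper's proof: the paper likewise tests $(E)_h^{m+1}$ with ${\bf v}_h=A_h^{-1}{\bf e}_h^{m+1}$, obtains the telescoping dual-norm quantity plus the dissipation $2k\,|{\bf e}_h^{m+1}|^2$ with the pressure term vanishing, exploits the integral reconstruction $e_{3,h}^m=\gradH\cdot\int_z^0{\bf e}_h^m$ to integrate by parts horizontally in the critical terms $c\bigl(e_{3,h}^m,{\bf u}(t_{m+1}),A_h^{-1}{\bf e}_h^{m+1}\bigr)$, trades $\|{\bf e}_i\|$ for $|{\bf e}_i|=O(h^{l+1})$ in the convective interpolation terms, controls the intermediate-velocity contribution through (\ref{dtr1bis}) combined with ${\bf (H)}$ (so that $\sqrt{k}\,h^l\le h^{l+1}$), and closes with the generalized discrete Gronwall lemma for $k$ small. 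A cosmetic difference: the paper does not invoke Lemma~\ref{regw16} here; it keeps $\|A_h^{-1}{\bf e}_h^{m+1}\|$ as the Gronwall variable and lets the inverse-inequality losses accumulate in coefficients $C\,h^{-2}\bigl(\|{\bf e}_h^m\|^2+h^{2l}\bigr)$, summable by Theorem~\ref{dt2} and ${\bf (H)}$; your use of the $W^{1,6}$ bound is a legitimate variant.

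There is, however, one genuine gap, in the time-interpolation term $I_2=-2k\bigl(\delta_t{\bf e}_i^{m+1},A_h^{-1}{\bf e}_h^{m+1}\bigr)$. You claim that $|{\bf e}_i^{m+1}-{\bf e}_i^m|\le C\,h^l\int_{t_m}^{t_{m+1}}\|{\bf u}_t\|_{H^l}$ ``after Young and summation'' yields $O(h^{2l}k)$, hence $O(h^{2(l+1)})$ by ${\bf (H)}$. The accounting is wrong: any Gronwall-compatible splitting must assign a weight $\gamma_m\,k\,\|A_h^{-1}{\bf e}_h^{m+1}\|^2$ with $k\sum_m\gamma_m\le C$ to the dual-norm side, and then the data side is at least $C\gamma_m^{-1}h^{2l}\int_{t_m}^{t_{m+1}}\|{\bf u}_t\|^2_{H^l}$, which sums to $O(h^{2l})$, not $O(h^{2l}k)$ --- the factor $k$ gained by Cauchy--Schwarz in time on $\int_{t_m}^{t_{m+1}}\|{\bf u}_t\|_{H^l}$ is exactly cancelled by the $1/k$ hidden in $\delta_t$. (To genuinely win a $k$ you would need a per-step weight $\varepsilon\|A_h^{-1}{\bf e}_h^{m+1}\|^2$ with no factor $k$, i.e.\ a Gronwall coefficient of size $\varepsilon/k$, and the resulting exponential $e^{CT/k}$ destroys the estimate.) Since this term is top order, your argument as written only proves $O(k+h^l)$ in $l^2({\bf L}^2)$; for $l=1$ the entire gain of the theorem is lost. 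The paper's fix is precisely the device you already use elsewhere: apply the $O(h^{l+1})$ $L^2$-interpolation estimate of ${\bf (H3)}$ to $\delta_t{\bf u}(t_{m+1})$ itself, $|{\bf e}_i(\delta_t{\bf u}(t_{m+1}))|\le C\,h^{l+1}\|\delta_t{\bf u}(t_{m+1})\|_{H^{l+1}}$, giving $I_2\le C\,k\,\|A_h^{-1}{\bf e}_h^{m+1}\|^2+C\,h^{2(l+1)}\int_{t_m}^{t_{m+1}}\|{\bf u}_t\|^2_{H^{l+1}}$: the extra order comes from the interpolation operator (at the price of $H^{l+1}$-regularity of ${\bf u}_t$), not from the mesh constraint ${\bf (H)}$. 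With this single replacement your proof matches the paper's.
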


\begin{proof}
Taking  ${\bf v}_h =A_h^{-1}  {\bf e}_h^{m+1} \in {\bf V}_h$ in
$(E)_h^{m+1}$,
 we obtain
 \begin{eqnarray*}
    &&   \| A_h^{-1} \,  {\bf e}_h^{m+1} \| ^2 -
 \| A_h^{-1} \,  {\bf e}_h^{m} \| ^2 +
 \| A_h^{-1} \,  {\bf e}_h^{m+1}  -
  A_h^{-1} \,  {\bf e}_h^{m} \| ^2
+2\,  k | {\bf e}_h^{m+1} | ^2    \\
    && \le 2\, k \,  {\bf NL}_h^{m+1} (A_h^{-1} \,  {\bf e}_h^{m+1} )-
    2\,  k \Big( \delta_t {\bf e}_i^{m+1} , A_h^{-1}
 {\bf e}_h^{m+1} \Big)+ 2\, k \Big(\mathcal{E}^{m+1},
 A_h^{-1} {\bf e}_h^{m+1} \Big)
 :=I_1 +I_2 +I_3.
 \end{eqnarray*}

We bound the  more complicate terms of $I_1$:
$$2\, k\, c\Big(u_{3,h}^m, \emedio_h , A_h ^{-1} \ee_h \Big) =
 -2\, k\, c\Big( e_{3,h}^m , \emedio_h
, A_h ^{-1} \ee_h \Big)
$$
$$
+ 2\, k\, c\Big( u_3 (t_m) ,  \emedio_h , A_h
^{-1} \ee_h \Big):=L_1+L_2
$$
By using the inverse inequalities $\|{	\bf e}_h\|_{L^3}\le
h^{-1/2} |{	\bf e}_h|$ in $3D$-domains and $\|{	\bf e}_h\|_{L^{\infty}_{{\bf x}}}
\le h^{-1} \|{	\bf e}_h\|_{L^2_{{\bf x}}}$ in $2D$-domains:
\begin{eqnarray*}
L_1& =& k\, \Big (\partial_z  e_{3,h}^m \, \emedio_h , A_h ^{-1}
\ee_h \Big )-2\, k\,\Big ( e_{3,h}^m \, \emedio_h
, \partial_z A_h ^{-1} \ee_h \Big ) \\
&\le& k\, \Big(\|\partial_z  e_{3,h}^m \|_{L^2} \|\emedio_h
\|_{L^3} \, \|A_h ^{-1} \ee_h \|_{L^6}+2\,  \|e_{3,h}^m
\|_{L^{\infty}_z\, L^2_{{\bf x}}} \, \| \emedio_h \|_{L^{2}_z\,
L^{\infty}_{{\bf x}}} \,
\|\partial_z  A_h^{-1} \ee_h \|_{L^{2}_z\, L^{2}_{{\bf x}}} \Big) \\
&\le& C\, k\, \Big(\| {\bf e}_h ^m\| + \|{\bf e}_i^m \| \Big)
(h^{-1/2}+h^{-1})\Big(| \ee_h -
\emedio_h |+|\ee_h | \Big)  \| A_h^{-1} \ee_h \|\\
&\le& \varepsilon \, k\, | \ee_h |^2 + \varepsilon \, k | \ee_h -
\emedio_h |^2 + C\frac{k}{h^2} \Big(\|{\bf e}_h ^m \| ^2 + h^{2l}
\Big) \| A_h^{-1} \ee_h \|^2,
\end{eqnarray*}
\begin{eqnarray*}
L_2 &\le&  C\, k\, \Big( \|u_3 (t_m) \|_{L^{\infty}}+\|
\partial_z u_3(t_m)\|_{L^3} \Big) \Big( |
\emedio_h -\ee_h | +|\ee_h | \Big) \|  A_h ^{-1} \ee_h  \|
\\
&\le& \varepsilon \, k  \Big(|\ee_h |^2 +  |\emedio_h -\ee_h |^2
\Big) +C\, k \,  \|  A_h ^{-1} \ee_h  \|^2 .
\end{eqnarray*}

The vertical part of $2\, k\, c\Big({\bf U}_{h}^m ,\ee_i , A_h ^{-1}
\ee_h \Big)$ is
$$ 2\, k\, c\Big(u_{3,h}^m ,\ee_i , A_h ^{-1}
\ee_h \Big)=2\, k\, c\Big( e_{3,h}^m + e_{3,i}^m , \ee_i ,  A_h
^{-1}\ee_h\Big) - 2\, k\, c\Big(u_3 (t_m), \ee_i ,  A_h
^{-1}\ee_h\Big)$$ and its more complicate term is
$$2\, k\, c\Big(e_{3,h}^m, \ee_i , A_h ^{-1} \ee_h \Big)=2\,  k\, \Big(e_{3,h}^m\,
  \ee_i , \partial_z A_h ^{-1}\ee_h \Big)+k\, \Big(\partial_z e_{3,h}^m\,
   \ee_i , A_h ^{-1}\ee_h \Big):= N_1 +N_2   $$ 
   We bound
\begin{eqnarray*}
N_1 &\le & k\, \|e_{3,h}^m \|_{L_z^{\infty} L^2_{{\bf x}}} \ \|
\ee_i \|_{L_z^{2} L^2_{{\bf x}}}     \, \|\partial_z A_h^{-1}
\ee_h
\|_{L^{2} L^{\infty}_{{\bf x}}} \le C\, k\, \| {\bf e}_h^m  \|   \, |\ee_i |\, h^{-1} \,  \|A_h ^{-1} \ee_h \|\\
&\le & C\, \frac{k}{h^2} \|{\bf e}_h ^m \| ^2 \, \| A_h ^{-1}
\ee_h \| ^2 + C \,k \, h^{2(l+1)} ,
\end{eqnarray*}
\begin{eqnarray*}
N_2 &\le & k\, \| \nabla_{\bf x} \cdot {\bf e}_{h}^m \|_{L^{3}} \|
\ee_i \|_{L^2} \| A_h^{-1} \ee_h
\|_{L^6} \le C\, k\,h^{-1/2}  \|  {\bf e}_h ^m \|  \, |\ee_i | \|A_h ^{-1} \ee_h \|\\
&\le & C\, \frac{k}{h} \|{\bf e}_h ^m \| ^2 \, \| A_h ^{-1} \ee_h
\| ^2 + C \, k \,  h^{2(l+1)} .
\end{eqnarray*}
Finally, by a similar way, we bound
\begin{eqnarray*}
 2\, k\,c\Big(e_{3,i}^m, \ee_i , A_h^{-1} \ee_h \Big)
& \le &C\, k\, \Big(\| {\bf e}_i ^m \| \, |\ee_i |\,h^{-1} \|  A_h
^{-1} \ee_h \| + h^{-1/2} \| {\bf e}_i ^m \| \,
|\ee_i | \, \|  A_h ^{-1} \ee_h \| \Big)\\
& \le & C\, k\,  |\ee_i |\, \|  A_h ^{-1} \ee_h \| \le C\, k \, \|
A_h ^{-1} \ee_h \| ^2 + C \, k \, h^{2(l+1)} .
\end{eqnarray*}

Other conflictive term of $I_1$ is:
$$2\, k\, c\Big(e_{3,h}^m, {\bf u} (t_{m+1}), A_h ^{-1} \ee_h \Big)
= 2\, k\, \Big( e_{3,h}^m \,  \partial_z {\bf u} (t_{m+1}) , A_h ^{-1}
\ee_h \Big)
+ k\, \Big( \partial_z e_{3,h}^m  \,   {\bf u}
(t_{m+1}) , A_h ^{-1} \ee_h \Big):=J_1+J_2 $$
 To bound $J_1$, it is necessary
the explicit expression of
 $e_{3,h}^m = \int _z^0  \nabla_{{\bf x}} \cdot {\bf e}_h ^m =
 \nabla_{{\bf x}} \cdot \int _z^0 {\bf e}_h ^m $
 which let us to integrate by parts,
\begin{eqnarray*}
J_1&= &k \, \Big ( \nabla _{{\bf x}} \cdot (\int_z^0 {\bf e}_h^m)
\,
\partial_z {\bf
  u}(t_{m+1}), A_h ^{-1} \ee_h  \Big )
  \\
  &=& - k\, \Big (\int_z^0 {\bf e}_h
^m ,\nabla_{\bf x} \partial_z {\bf u}(t_{m+1}) \, A_h^{-1} \ee_h +
\partial_z {\bf u} (t_{m+1}) \, \nabla_{{\bf x}} A_h^{-1} \ee_h
\Big )
\\
 &\le & k\,\| \int_z^0 {\bf e}_h ^m \|_{L^{\infty}_z L^2_{\bf x}} \Big(
 \|\nabla_{\bf x} \partial_z {\bf u}(t_{m+1}) \|_{L^{6/5}_z
    L^3_{\bf x}}  \| A_h^{-1} \ee_h\| _{L^6_zL^6_{\bf x}}+
    \| \partial_z {\bf u}  (t_{m+1})\|_{L^{2}_z L^{\infty}_{\bf x}}
     \|\nabla_{{\bf x}} A_h^{-1}\ee_h \|_{L^2_{z}L^2_{\bf x}} \Big) \\
 &\le & \varepsilon \, k\,|{\bf e}_h ^m|^2 +C\, k \, \|A_h^{-1} \ee_h \|^2 .
\end{eqnarray*}
In a similar way, by using that $\partial_z e_{3,h}^m=-\gradH\cdot
{\bf e}_h ^m $,
\begin{eqnarray*}
J_2
 &\le & k\, |  {\bf e}_h ^m | \Big(
\|\nabla_{\bf x}   {\bf u}(t_{m+1}) \|_{
    L^3} \, \| A_h^{-1} \ee_h\| _{L^6}+ \|  {\bf u}
  (t_{m+1})\|_{ L^{\infty}}  |\nabla_{{\bf x}} A_h^{-1}
\ee_h | \Big)\\
&\le & \varepsilon \, k\,|{\bf e}_h ^m|^2 +C\, k \, \|A_h^{-1}
\ee_h \|^2 .
\end{eqnarray*}

On the other hand, we bound the $I_2$-term as follows
\begin{eqnarray*}
  I_2  &=& -2\, k   \Big( \delta_t {\bf e}_i^{m+1} , A_h^{-1} \, {\bf
e}_h^{m+1} \Big)\le C \,k \|A_h^{-1} {\bf e}_h^{m+1} \| ^2
+ \varepsilon \,k |{\bf e}_i (\delta_t \uu) |^2  \\
   &\le & C \,k \|A_h^{-1} {\bf
  e}_h^{m+1} \| ^2 + \varepsilon \, k\, h^{2(l+1)}
   \| \delta_t \uu \| _{H^{l+1}}^2 \\
   &\le & C\, k\, \|
A_h^{-1} {\bf e}_h^{m+1} \| ^2 + \varepsilon \, h^{2(l+1)}\,
\int_{t_m}^{t_{m+1}} \| {\bf u}_t \| _{H^{l+1}}^2 .
\end{eqnarray*}

Finally, the $I_3$-term is easy to bound by using that $ {\bf u}_{tt} \in L^2({\bf L}^2)$ given by hypothesis $({\bf R3})$.

Adding from $m=0$ to $r$ (for any $r<M$), since one has 
$ \|{\bf e}_h ^m \| ^2\le C\, h^2 $
and $k \sum_m | {\bf
e}_h^{m+1/2}-{\bf e}_h ^m |^2\le C k(k+h^{2l})\le C (k^2 +
h^{2(l+1)})$   owing to ${\bf (H)}$,
 we can  apply the generalized discrete  Gronwall's Lemma,  
obtaining  the desired estimates for $k$ small enough.
\end{proof}
\subsection{$O(k+h^l)$ for $e_{p,h}^{m+1}$ in $l^2( L^2)$ }
Owing to the improved error estimate obtained in the above Subsection, now we
can prove the same error estimates obtained in Theorems \ref{dtt1} and
\ref{dtt2} also for $l=1$, that is,  using $O(h)$ FE-approximation.

Indeed, in the proof of Theorem \ref{dtt1},
 we can apply the discrete  Gronwall's Lemma, since the  term $ C\, k\,
\displaystyle h^{-4} \, \displaystyle \sum_{m=0}^r | {\bf
e}_h^m |^2 \, |\delta_t {\bf e}_h^m | ^2$ appear, and now for
$l=1$, owing to (\ref{mejoraL2}) and ${\bf (H)}$, we have
 $$
 C\,k\,\frac{1}{h^4} \,\sum_{m=0}^r | {\bf e}_h^m |^2 \le C.
 $$
 The rest of the proof is similar, arriving at the following result:
\begin{corollary}\label{epl1}
 Assuming hypotheses of Theorem  \ref{dtt2} and Theorem \ref{mejorL2},
 one has
 $$\| e_{p,h}^{m+1} \| _{l^2(L^2)} \le C\,(k+ h^l ) .$$
 \end{corollary}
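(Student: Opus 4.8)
The plan is to exploit the sharpened $l^2({\bf L}^2)$ velocity estimate (\ref{mejoraL2}) of Theorem~\ref{mejorL2} in order to rerun, now also for $l=1$, the chain of arguments that produced the optimal pressure estimate of Corollary~\ref{epl2l2bis} in the case $l=2$. Concretely, I would first upgrade Theorems~\ref{dtt1} and~\ref{dtt2} to $l=1$, obtaining $\|\deltaee_h\|_{l^\infty({\bf L}^2)\cap l^2({\bf H}^1)}\le C(k+h^l)$, and then close with the hydrostatic \emph{Inf-Sup} condition {\bf (H1)}.

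The decisive point is the single place in the proof of Theorem~\ref{dtt1} where the restriction $l=2$ was genuinely used: the discrete Gronwall Lemma~\ref{GronwallD} was applied to absorb a term of the form
$$
C\,k\,h^{-4}\sum_{m=0}^r\Big(|\emedio_h-{\bf e}_h^m|^2+|{\bf e}_h^m|^2\Big)\,|\delta_t{\bf e}_h^m|^2,
$$
which forces the coefficient $k\,h^{-4}\sum_{m=0}^r|{\bf e}_h^m|^2$ to remain bounded independently of $(k,h)$. With the Section~4 estimate $\|{\bf e}_h^{m+1}\|_{l^2({\bf L}^2)}\le C(k+h^l)$ this coefficient behaves like $h^{-4}(k+h)^2\sim h^{-2}$ for $l=1$ under {\bf (H)}, and diverges. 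I would instead invoke (\ref{mejoraL2}), which gives $k\sum_{m=0}^r|{\bf e}_h^m|^2\le C(k+h^{l+1})^2$; since {\bf (H)} yields $k\le h^2$ and hence $k+h^{l+1}\le 2h^2$ when $l=1$, this produces
$$
k\,h^{-4}\sum_{m=0}^r|{\bf e}_h^m|^2\le C\,h^{-4}(k+h^{l+1})^2\le C,
$$
exactly the bound needed for the $\exp$-factor in Lemma~\ref{GronwallD} to be controlled. The companion coefficient coming from $|\emedio_h-{\bf e}_h^m|^2$ is handled the same way using (\ref{dtr1bis}). Every other term in (\ref{dte1})--(\ref{dte2}) was already estimated for a generic order $h^l$, so no further modification is required; thus Theorems~\ref{dtt1} and~\ref{dtt2} hold for $l=1$ as well.

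With the optimal control $\|\deltaee_h\|_{l^2({\bf L}^2)}\le C(k+h^l)$ secured, the pressure estimate follows in the standard way from the error equation $(E)_h^{m+1}$. For arbitrary ${\bf v}_h\in{\bf X}_h$ I would isolate the pressure pairing,
$$
\Big(e_{p,h}^{m+1},\gradH\cdot\langle{\bf v}_h\rangle\Big)_S=\Big(\deltaee_h,{\bf v}_h\Big)+\Big(\nabla\ee_h,\nabla{\bf v}_h\Big)+\Big(\delta_t\ee_i,{\bf v}_h\Big)-{\bf NL}^{m+1}({\bf v}_h)-\Big(\mathcal{E}^{m+1},{\bf v}_h\Big),
$$
and bound each right-hand term by $\big(|\gradH{\bf v}_h|+\|\partial_z{\bf v}_h\|_{L^3}\big)$ times a factor controlled in $l^2$ by $C(k+h^l)$: the first two through the velocity estimates of Theorems~\ref{dt2} and~\ref{dtt2}, the interpolation and consistency terms through {\bf (H3)} and the Taylor remainder defining $\mathcal{E}^{m+1}$, and the trilinear term ${\bf NL}^{m+1}$ through the anisotropic bounds (\ref{est2-errorvertical})--(\ref{est-interpo-vertical-bis}) already deployed above. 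Dividing by that norm, taking the supremum over ${\bf v}_h$ and applying {\bf (H1)} gives a pointwise bound for $\|e_{p,h}^{m+1}\|_{L^2(S)}$, and summing in $l^2$ yields the claim.

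The main obstacle is conceptual and concentrated entirely in the first step: I must be certain that the extra power of $h$ provided by Theorem~\ref{mejorL2} is exactly what compensates the inverse power $h^{-4}$ generated by the inverse inequalities in the vertical convective terms. This is what makes the modified scheme of Section~\ref{Se:structured meshes} indispensable here --- the integral computation of the vertical velocity is what raises the $l^2({\bf L}^2)$ order from $h^l$ to $h^{l+1}$, and without it the Gronwall coefficient cannot be tamed for $l=1$ under {\bf (H)}. Once this single bound is in hand, the remainder is a faithful repetition of the already-established arguments.
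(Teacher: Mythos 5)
Your proposal is correct and takes essentially the same route as the paper: the paper likewise identifies the Gronwall coefficient $C\,k\,h^{-4}\sum_{m}|{\bf e}_h^m|^2$ in the proof of Theorem~\ref{dtt1} as the only genuinely $l=2$-dependent step, tames it for $l=1$ by combining (\ref{mejoraL2}) with ${\bf (H)}$ (since $k+h^{l+1}\le 2h^2$), declares the rest of the proofs of Theorems~\ref{dtt1} and \ref{dtt2} unchanged, and closes with the hydrostatic \emph{Inf-Sup} condition ${\bf (H1)}$ exactly as in Corollary~\ref{epl2l2bis}. The additional details you supply --- handling the companion coefficient from $|\emedio_h-{\bf e}_h^m|^2$ via (\ref{dtr1bis}) and spelling out the pressure-isolation step in $(E)_h^{m+1}$ --- correctly fill in what the paper leaves implicit.
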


\section{Approximation of the  Coriolis term}\label{coriolis}

  Looking at
  the  results obtained in previous Sections, we consider that the more convenient
  forms to introduce the Coriolis term in the scheme can be the following (the Coriolis term will be   always refereed at the
 end-of-step velocity, either  $\uu_h$ or ${\bf u}_h^m$, because it is the better approximation in  the scheme):
\begin{enumerate}
%

\item Considering in $(S_1)_h^{m+1}$ the explicit term ${\bf b}
  ({\bf u}_h^m)$. In the  proof of Lemma \ref{apriori_est}, this term introduces the extra-term 
$$\Big({\bf b}
  ({\bf u}_h^m), \umedio_h\Big)=\Big({\bf b}
  ({\bf u}_h^m), \umedio_h -{\bf u}_h^{m}
  \Big)$$  which  produces  an
artificial exponential bound in time in the stability
estimates. Indeed, bounding as
$$\Big({\bf b}
  ({\bf u}_h^m), \umedio_h -{\bf u}_h^{m}
  \Big) \le C\,  |
  {\bf u}_h^m |^2+ \varepsilon |\umedio_h -{\bf u}_h^{m}|^2$$
and applying the  discrete Gronwall inequality, a new exponential bound appears. With respect to the
error estimates, some new terms  in $(E_1)_h^{m+1}$ and in $(E_2)_h^{m+1}$ appear, although these terms do not add new difficulties.

\item Considering the following Coriolis correction strategy: to introduce in $(S_1)_h^{m+1}$ the explicit term ${\bf b} ({\bf u}_h^m)$ and in $(S_2)_h^{m+1}$ the correction term ${\bf b}
  ({\bf u}_h^{m+1} -{\bf u}_h^m)$.
This  correction scheme  works in a similar way to scheme given in point 1  with respect to the 
stability and error estimates.


\end{enumerate}

Notice that in the two previous cases, the computation of the two components of velocity $\umedio_h$ is decoupled. Finally, the implementation  of Hydrostatic Stokes step (Sub-step~2) is simpler in the case 1. 

\section{Conclusions}

In this paper we have developed two  new ways of handling  stable and convergent  approximations for the Primitive Equations, based on the reformulations (Q) and (R) of the problem, being (Q) an  integral-differential formulation and (R) a fully  differential one.

 In both cases, vertically structured meshes are needed, and the error estimates are deduced under the same constraint $k\le  h^2$.
 
 With respect to computational  implementation, scheme based on (R) is simpler than scheme related to (Q), although the latter satisfies  more analytical error estimates. In fact, (Q)-scheme satisfies optimal accuracy   $O(k+h^{l+1})$ in the $L^2(\Omega)$-norm  for the velocity and   $O(k+h^{l})$  in the $H^1(\Omega)\times L^2(\Omega)$-norm for the velocity and pressure, whereas for the  (R)-scheme   this optimal  accuracy  can be proved only  in the $H^1(\Omega)\times L^2(\Omega)$-norm for the velocity and pressure  when $l=2$, that is,  using $O(h^2)$ FE-approximation. 
 
  Nevertheless, the constraint $k\le h^2$, imposed in both schemes, is compatible with the accuracy $O(k + h^2)$ which is satisfied in the $H^1\times L^2$-norm for velocity and pressure when  $l=2$ and only in the $ L^2$-norm for the velocity when $l=1$. Therefore, by using $O(h^2)$ FE-approximation ($l=2$) is more convenient the (R)-scheme because is simpler to implement.

\section*{Appendix}
\noindent{\bf Proof} [of Lemma \ref{regw16}]: 
Let  $A^{-1}{\bf v}\in {\bf V} $ be  the solution of the
hydrostatic Stokes Problem  with second member ${\bf v}$. This solution verifies (see \cite{g-r} for the Stokes case)
\begin{equation}\label{estrella}
\|A^{-1} {\bf v}_h -A_h^{-1}{\bf v}_h \|\le C\, h \, |{\bf v}_h | .
\end{equation}

On the other hand, 
$$ \| A_h^{-1}{\bf v}_h \| _{W^{1,6}} \le   \| A_h ^{-1} {\bf
  v}_h -\widetilde{I}_h  A^{-1}{\bf v}_h  \|
  _{W^{1,6}}+ \|  \widetilde{I}_h\, A^{-1} {\bf
  v}_h   \| _{W^{1,6}} $$
 where $\widetilde{I}_h$ is
an interpolator with respect to ${\bf X}_h $.

We bound the RHS as follows, using  hypothesis {\bf
(H0)},  the stability property $\|\widetilde{I}_h {\bf
v}_h\|_{W^{1,6}}  \le C \| {\bf v}_h\|_{W^{1,6}} $ and the inverse inequality $\| {\bf v}_h \|_{W^{1,6}} \le C\,h^{-1}  \| {\bf v}_h \|$ :
$$\|   \widetilde{I}_h A^{-1} {\bf v}_h \| _{W^{1,6}} \le
C\, \| A^{-1}{\bf v}_h  \|_{W^{1,6}}  \le C \|A^{-1}{\bf v}_h
\|_{H^2} \le C\, |{\bf v}_h | ,
$$
$$\|   A_h ^{-1} {\bf
  v}_h -\widetilde{I}_h  A^{-1}{\bf v}_h \|_{W^{1,6}} \le \frac{C}{h}
 \|  A_h ^{-1} {\bf
  v}_h -\widetilde{I}_h  A^{-1}{\bf v}_h \| \le  \frac{C}{h}
\Big(\| A_h ^{-1} {\bf
  v}_h - A^{-1}{\bf v}_h     \| +
 \|A^{-1}{\bf v}_h  - \widetilde{I}_h  A^{-1}{\bf v}_h   \| \Big) .
$$

Finally, applying (\ref{estrella}) and the error interpolation
inequality 
$$ \|A^{-1}{\bf v}_h  - \widetilde{I}_h \, A^{-1}{\bf
v}_h \| \le C\, h\, \|A^{-1}{\bf v}_h   \|_{H^2} \le C\, h |{\bf
v}_h|,$$ we arrive at 
$\|   A_h ^{-1} {\bf
  v}_h -\widetilde{I}_h \, A^{-1}{\bf v}_h \|_{W^{1,6}}\le C\,|{\bf
  v}_h|.$ 
Therefore, we conclude
$$  \|  A_h ^{-1} {\bf v}_h  \|_{W^{1,6}} \le C\, | {\bf v}_h |.
$$

\renewcommand{\baselinestretch}{1.1}

\end{document}